\let\pa=\partial
\let\al=\alpha
\let\d=\delta
\let\lam=\lambda
\let\f=\frac
\let\om=\omega
\let\D=\Delta
\let\wt=\widetilde
\let\wh=\widehat
\def\cA{{\mathcal A}}
\def\cB{{\mathcal B}}
\def\cC{{\mathcal C}}
\def\cE{{\mathcal E}}
\def\cF{{\mathcal F}}
\def\cH{{\mathcal H}}
\def\cS{{\mathcal S}}
\def\cD{{\mathcal D}}
\def\h{{\rm h}}
\def\eqdefa{\buildrel\hbox{\footnotesize def}\over =}
\def\K{\mathop{\mathbb K\kern 0pt}\nolimits}
\def\N{\mathop{\mathbb N\kern 0pt}\nolimits}
\def\Q{\mathop{\mathbb Q\kern 0pt}\nolimits}
\def\R{\mathop{\mathbb R\kern 0pt}\nolimits}
\def\Z{\mathop{\mathbb Z\kern 0pt}\nolimits}
\def\dv{\mbox{\rm div}}
\def\dive{\mathop{\rm div}\nolimits}
\def\curl{\mathop{\rm curl}\nolimits}
\def\Supp{\mathop{\rm Supp}\nolimits\ }
\def\no{\noindent}
\def\na{\nabla}
\def\p{\partial}
\newcommand{\andf}{\quad\hbox{and}\quad}
\def\hh{{H^{\frac12,0}}}
\def\hm{{H^{-\frac12,0}}}
\def\hf{{H^{-\frac13,-\frac16}}}
\def\hs{{H^{\frac23,-\frac16}}}
\def\vcurl{v^{\rm h}_{\rm curl}}
\def\Laph{\Delta_{\rm h}}
\def\nablah{\nabla_{\rm h}}
\def\divh{\dive_{\rm h}}
\def\vdiv{v^{\rm h}_{\rm div}}
\def\Ht{\cH_\theta}
\def\vh{v^{\rm h}}
\def\dhk{\Delta_k^{\rm h}}
\def\dvl{\Delta_{\ell}^{\rm v}}
\def\omss{\omega_{\frac34}}
\def\vss{(\pa_3 v^3)_{\f34}}
\def\v{{\rm v}}
\newcommand{\with}{\quad\hbox{with}\quad}
\newcommand{\beq}{\begin{equation}}
\newcommand{\eeq}{\end{equation}}
\newcommand{\ben}{\begin{eqnarray}}
\newcommand{\een}{\end{eqnarray}}
\newcommand{\beno}{\begin{eqnarray*}}
\newcommand{\eeno}{\end{eqnarray*}}
\newtheorem{defi}{Definition}[section]
\newtheorem{thm}{Theorem}[section]
\newtheorem{lem}{Lemma}[section]
\newtheorem{rmk}{Remark}[section]
\newtheorem{cor}{Corollary}[section]
\newtheorem{prop}{Proposition}[section]
\begin{document}
\title[Global well-posedness of 3-D anisotropic Navier-Stokes system]
{Global well-posedness of 3-D anisotropic Navier-Stokes system
with large vertical viscous coefficient}
\author[Y. Liu]{Yanlin Liu}
\address[Y. Liu]{Hua Loo-Keng Center for Mathematical Sciences, Academy of Mathematics and System Sciences,
 The Chinese Academy of Sciences, Beijing 100190, CHINA.} \email{liuyanlin@amss.ac.cn}
\author[P. Zhang]{Ping Zhang} \address[P. Zhang]{Academy of Mathematics $\&$ Systems Science and Hua Loo-Keng Center for Mathematical Sciences, The Chinese Academy of
Sciences, Beijing 100190, CHINA, and School of Mathematical Sciences, University of Chinese Academy of Sciences, Beijing 100049, CHINA.} \email{zp@amss.ac.cn}

\date{\today}

\begin{abstract}
In this paper, we first prove the global well-posedness of
3-D anisotropic  Navier-Stokes system provided that the vertical
viscous coefficient of the system is sufficiently  large compared to some critical norm of the initial data.
 Then we shall construct a family of initial data, $u_{0,\nu},$ which vary fast enough in the vertical variable and which
 are not small in the space, $BMO^{-1}.$ Yet $u_{0,\nu}$ generates a unique global solution to the classical 3-D Navier-Stokes system provided that $\nu$ is sufficiently large.
\end{abstract}
\maketitle

\noindent {\sl Keywords:} Navier-Stokes system, anisotropic Littlewood-Paley theory, well-posedness.

\vskip 0.2cm
\noindent {\sl AMS Subject Classification (2000):} 35Q30, 76D03  \

\setcounter{equation}{0}
\section{Introduction}\label{sec1}
In this paper, we first investigate the global well-posedness of
the following 3-D anisotropic  Navier-Stokes system provided that the vertical
viscous coefficient is large enough:
\begin{equation*} (NS_\nu)
\quad \left\{\begin{array}{l}
\displaystyle \pa_t v + v\cdot\nabla v -\Delta_\nu v+\nabla P=0, \qquad (t,x)\in\R^+\times\R^3, \\
\displaystyle \dv\, v = 0, \\
\displaystyle  v|_{t=0}=v_0=(v_0^\h,v_0^3),
\end{array}\right.
\end{equation*}
where $v=(v^\h, v^3)$ with $v^\h=(v^1,v^2)$ stands for the velocity of the incompressible fluid flow and
$P$ for the scalar pressure function,  which guarantees the divergence free condition of the velocity field,
$\Delta_{\nu}\eqdefa \Laph+\nu^2 \pa_3^2$ with $\Laph\eqdefa\pa_1^2+\pa_2^2$,
and $\nu^2$ denotes the vertical viscous coefficient.

\smallbreak

When $\nu=1,$ $(NS_\nu)$ is exactly the classical Navier-Stokes system. In the sequel, we shall always denote the system $(NS_1)$ by $(NS).$
 Whereas when $\nu=0,$ $(NS_\nu)$ reduces to the
anisotropic Navier-Stokes system arising from geophysical fluid mechanics (see \cite{CDGG}). The main motivation for us to study
Navier-Stokes system with large vertical viscous coefficient comes from the study of Navier-Stokes system on thin domains (see (2.4)
of \cite{RS93} for instance), which we shall present more details later on.

\smallbreak

In the seminal paper \cite{Leray},    Leray proved the global existence
  of finite energy weak solutions to $(NS)$. Yet the uniqueness and regularity of
such weak solutions are big open questions in the field of
mathematical fluid mechanics except the case when the initial data have special structure.
 For
instance,  with axi-symmetric initial velocity and without swirl component, Ladyzhenskaya \cite{La}  and independently Ukhovskii and Yudovich
\cite{UY}  proved the
existence of weak solution along with the uniqueness and regularity of such solution to $(NS)$. When the initial data $v_0$ has a slow space
variable, Chemin and Gallagher \cite{CG10} (see also \cite{CZ15}) can also prove the global well-posedness of such a system.

\smallbreak

While Fujita and Kato  \cite{fujitakato} proved the global well-posedness of $(NS)$ when the initial data
$v_0$ is sufficiently small in the homogeneous Sobolev space $H^{\f12}.$
This result was generalized by Cannone, Meyer and Planchon \cite{cannonemeyerplanchon} for initial data being sufficiently small
 in the homogeneous Besov
space, $ B^{-1+\frac 3p}_{p,\infty},$ with $p\in ]3,\infty[.$
 The end-point result in this direction is due to Koch and Tataru \cite{kochtataru}, where they proved the global well-posedness of
 $(NS)$ with initial data being
sufficiently small in $\text{BMO}^{-1}$,  the norm of which is determined by
\begin{equation}\label{bmonorm}
\|v\|_{\text{BMO}^{-1}}\eqdefa \|v\|_{B^{-1}_{\infty,\infty}}+\sup_{x\in\R^3,\,R>0}\f1{R^{\f32}}
\Bigl(\int_0^{R^2}\int_{B(x,R)}\bigl|e^{t\D}v(y)\bigr|^2
\,dydt\Bigr)^{\f12}.
\end{equation}
We remark that for $p\in ]3,\infty[$, there holds
$$H^{\f12}(\R^3) \hookrightarrow L^3(\R^3) \hookrightarrow B^{-1+\frac 3p}_{p,\infty}(\R^3) \hookrightarrow \text{BMO}^{-1}(\R^3)
 \hookrightarrow B^{-1}_{\infty,\infty}(\R^3),$$
and the norms to the above spaces are sclaing-invariant
under the following scaling transformation
\beq \label{S1eq2} u_\lambda(t,x)\eqdefa\lambda u(\lambda^2 t,\lambda x) \andf u_{0,\lambda}(x)\eqdefa \lambda u_0(\lambda x).\eeq
We notice that for any solution $u$ of $(NS)$ on $[0,T],$ $u_\lam$ determined by \eqref{S1eq2} is also a solution of $(NS)$ on $[0,T/\lam^2].$ We remark that the largest space, which belongs to $\cS'(\R^3)$ and the norm of which is scaling invariant under \eqref{S1eq2}, is $B^{-1}_{\infty,\infty}(\R^3)$.
Moreover, Bourgain and Pavlovi\'c \cite{BP08} proved that $(NS)$ is actually
ill-posed with initial data in $B^{-1}_{\infty,\infty}.$

\smallskip

On the other hand, by crucially using the fact that $\dive v=0,$ Zhang \cite{Zhang10}, Paicu and the second author
\cite{PZ1} improved  Fujita and Kato's result
by requiring only two components of the initial velocity being
sufficiently small in some critical Besov space
even when $\nu=0$ in $(NS_\nu)$.
Lately, Chemin and Zhang \cite{CZ} proved that if the lifespan $T^\ast$
to the Fujita-Kato solution of $(NS)$
is finite, then for any unit vector field $e$ of $\R^3$, and any
$p\in ]4,6[$, there holds
\beno
\int_0^{T^\ast}\|v\cdot e\|_{H^{\f12+\f2p}}^p\,dt=\infty.
\eeno
This result ensures that a critical norm to one component of the velocity  field controls the regularity of Fujita-Kato solution to $(NS)$.
In general, we still do not know whether or not $(NS)$ is globally well-posed
with only one component of the initial velocity being
sufficiently small. Yet we shall prove  the global well-posedness of $(NS)$ with a family of  initial data, which vary fast enough in the vertical direction 
and the third component of which  are sufficiently small, see Corollary \ref{col1} below.

Before preceding, let us recall the
anisotropic Sobolev space.

\begin{defi}\label{defanisob}
{\sl For any $s,~s'\in\R$, $H^{s,s'} $ denotes
the space of homogeneous tempered distribution~$a$  such~that
 $$
\|a\|^2_{H^{s,s'}} \eqdefa \int_{\R^3} |\xi_{\rm
h}|^{2s}|\xi_3|^{2s'} |\wh a (\xi)|^2d\xi <\infty \with \xi_{\rm
h}=(\xi_1,\xi_2).
 $$ }
\end{defi}

\no{\bf Notations:} Let us denote  $\nabla_{\rm h}\eqdefa (\pa_1,\pa_2),~\nabla_{\rm h}^\perp\eqdefa(-\p_2,\p_1),$ $\nabla_\nu\eqdefa(\pa_1,\pa_2,\nu \pa_3)$ and \begin{equation}\label{defcE}
\cE_{T}\eqdefa C\bigl([0,T[,H^{\f12}\bigr)
\bigcap L^2_{\rm{loc}}\bigl([0,T[; H^{\frac32}\bigr).
\end{equation}
For any function $a$ and any positive constant $\lam$,
we denote
$a_\lam\eqdefa a|a|^{\lam-1}.$
 $\Omega\eqdefa\curl v$  designates
the vorticity of the velocity $v,$
and $\om\eqdefa\p_1v^2-\p_2v^1,$  the third component of $\Omega$.

Our first result of this paper states as follows:

\begin{thm}\label{thmmain}
{\sl Let $v_0$ satisfy $\Omega_0=\curl v_0\in L^{\frac 32}$ and $\dive v_0=0.$
Then there exists some universal positive constant $C_1$ such that if
\begin{equation} \label{thmmaincondi}
\nu\geq C_1\bigl(M_0+M_0^{\frac14}\bigr) \with M_0\eqdefa \|\omega_0\|_{L^{\f32}}^{\f32}+\|\nabla v^3_0\|_{\hm}^2,
\end{equation}
$(NS_\nu)$ has a unique global solution $v\in\cE_{\infty}$
so that for any $t>0,$
\begin{equation}\label{thmmainestimate}
\bigl\|\om(t)\bigr\|_{L^{\f32}}^{\f32}+\|\nabla v^3(t)\|_{\hm}^2
+\int_0^t\left(\bigl\|\nabla_\nu\omss\bigr\|_{L^2}^2
+\bigl\|\nabla_\nu \nabla v^3\bigr\|_{\hm}^2\right)\, dt'
\leq 2M_0.
\end{equation}
}\end{thm}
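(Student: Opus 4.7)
The plan is to close an a priori estimate for the functional
$$\mathcal{N}(t):=\|\om(t)\|_{L^{\f32}}^{\f32}+\|\nabla v^3(t)\|_{\hm}^2$$
together with its natural anisotropic dissipation
$$\cD(t):=\|\nabla_\nu\omss\|_{L^2}^2+\|\nabla_\nu\nabla v^3\|_{\hm}^2,$$
and then combine it with a standard local-existence result for $(NS_\nu)$ in $\cE_T$ (obtained via Biot-Savart together with the Fujita-Kato scheme, using that $\Omega_0\in L^{\f32}$ and $\nabla v_0^3\in\hm$ place $v_0$ in a space at the critical $\dot H^{\f12}$-scaling) and a continuity/bootstrap argument to pass from local to global under the largeness hypothesis \eqref{thmmaincondi}. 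The central mechanism is that each nonlinear term will be shown to carry a factor $\nu^{-\al}$ with $\al>0$, so taking $\nu$ as in \eqref{thmmaincondi} allows a fraction of $\cD$ to absorb all nonlinearities.

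\textbf{Weighted energy identities.} The third component of the vorticity equation reads
$$\pa_t\om+v\cdot\nabla\om-\Delta_\nu\om=\Omega^{\rm h}\cdot\nablah v^3+\om\,\pa_3 v^3.$$
Multiplying by $\om_{\f12}=\om|\om|^{-\f12}$, the transport term vanishes thanks to $\dive v=0$, the diffusion produces (up to a universal constant) $\|\nabla_\nu\omss\|_{L^2}^2$, and the time derivative gives $\tfrac23\tfrac{d}{dt}\|\om\|_{L^{\f32}}^{\f32}$. For the vertical velocity, the scalar equation $\pa_t v^3+v\cdot\nabla v^3-\Delta_\nu v^3+\pa_3 P=0$ is tested in the $\hm$-inner product on $\nabla v^3$ (equivalently, multiplying $g=(-\Laph)^{-\f14}v^3$ by $-\Delta g$, which commutes with $\Delta_\nu$), producing $\tfrac12\tfrac{d}{dt}\|\nabla v^3\|_{\hm}^2+\|\nabla_\nu\nabla v^3\|_{\hm}^2$. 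Adding the two identities yields precisely the left-hand side of \eqref{thmmainestimate}, modulo the nonlinear forcing terms.

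\textbf{Main obstacle: anisotropic coupling.} The decisive difficulty lies in the forcing term $\Omega^{\rm h}\cdot\nablah v^3$ from the $\om$-equation, since $\Omega^{\rm h}$ involves $\pa_3 v^{\rm h}$, which is not directly controlled by the horizontal piece of $\cD$. To circumvent this I would invoke the Biot-Savart representation
$$v^{\rm h}=\nablah^\perp(-\Laph)^{-1}\om+\nablah(-\Laph)^{-1}\pa_3 v^3,$$
so that $\pa_3 v^{\rm h}$ is recast as $(-\Laph)^{-1}\nablah^\perp\pa_3\om+(-\Laph)^{-1}\nablah\pa_3^2 v^3$; each such factor carries a vertical derivative that pairs with the enhanced vertical dissipation of $\om$ (through $\omss$) or of $\nabla v^3$, generating the desired $\nu^{-1}$ factor after a weighted Young inequality together with anisotropic Sobolev/Hölder estimates. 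The remaining nonlinearities --- the quadratic $\om\,\pa_3 v^3$, the transport $v\cdot\nabla v^3$ in the $v^3$-equation, and the pressure $\pa_3 P=-(-\Delta)^{-1}\pa_3\pa_i\pa_j(v^iv^j)$ (split into horizontal-horizontal, mixed, and vertical-vertical products) --- are handled by the same anisotropic product laws and each absorbed into $\cD/\nu$. Finally, the scaling mismatch between the non-invariant $\|\om\|_{L^{\f32}}^{\f32}$ and the scale-invariant $\|\nabla v^3\|_{\hm}^2$ is precisely what produces the two-scale threshold $M_0+M_0^{\f14}$ in \eqref{thmmaincondi}, as $M_0$ and $M_0^{\f14}$ appear as the natural coefficients after balancing the estimates in their respective scalings.
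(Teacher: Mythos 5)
Your a priori estimate strategy coincides with the paper's: the same functional $\|\om(t)\|_{L^{\f32}}^{\f32}+\|\nabla v^3(t)\|_{\hm}^2$ (the paper's $M(t)$), the same weighted $L^{\f32}$ energy obtained by testing the $\om$-equation against $\om_{\f12}$, the same use of Biot--Savart to trade $\pa_3 v^{\rm h}$ in the stretching term for $\pa_3\om$ and $\pa_3^2 v^3$, and essentially the same absorption mechanism: after Young's inequality each nonlinear contribution splits into a universal small fraction of the horizontal dissipation plus a power of $M$ times the vertical dissipation, and only the latter needs $\nu^2$ to be large. One factual correction: $L^{\f32}$ is the scale-invariant Lebesgue space for the vorticity under \eqref{S1eq2}, and $\|\nabla v^3\|_{\hm}$ is likewise invariant, so the threshold $C_1(M_0+M_0^{\f14})$ does not arise from a ``scaling mismatch''; it arises because the coefficients of the vertical dissipation in the closed estimate are $C(M^2+M^{\f12})$, which must be dominated by $\nu^2$.

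The genuine gap is the globalization step. A uniform-in-time bound on the critical quantities $\|\om\|_{L^{\f32}}$ and $\|\nabla v^3\|_{\hm}$, together with the integrated dissipation, does not by itself let you continue the local solution past its maximal time $T^\ast$: boundedness of critical norms is not a continuation criterion, and your functional only sees one component of the vorticity and one component of the velocity. The paper closes this loop with the one-component blow-up criterion of Chemin and Zhang (Theorem \ref{thm1.4ofCZ}, from \cite{CZ}): if $T^\ast<\infty$ then $\int_0^{T^\ast}\|v^3\|_{H^{\frac12+\frac2p}}^p\,dt=\infty$ for $p\in]4,6[$, and the interpolation \eqref{v3anisotropic} converts the $L^\infty_t$ bound on $M$ plus the $L^1_t$ bound on the dissipation into finiteness of exactly this integral, giving the contradiction (this is the computation \eqref{star}). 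Your ``continuity/bootstrap argument to pass from local to global'' silently assumes such a criterion; without citing it, or reproving an equivalent persistence result showing that $\Omega\in L^{\frac32}$ propagates under control of $v^3$ alone, the argument does not conclude. The bootstrap itself (the paper's $T^\star$ versus $T^\ast$ argument in \eqref{defTstar}--\eqref{S5eq23}) is fine and matches what you describe.
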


\begin{rmk}
Due to $\dive v_0=0,$
we deduce from Sobolev inequality and Biot-Savart's law that
$$\|v^3_0\|_{H^{\frac12,0}}\leq \|v_0\|_{H^{\frac12}}
\lesssim\|\nabla v_0\|_{L^{\frac32}}
\lesssim\|\Omega_0\|_{L^{\frac32}},$$
and
\begin{equation*}\begin{split}
\|\p_3v^3_0\|_{\hm}^2&=\int_{|\xi_3|\leq|\xi_{\rm h}|}|\xi_{\rm h}|^{-1}
\bigl|\cF(\pa_3 v^3_0)(\xi)\bigr|^2 d\xi
+\int_{|\xi_{\rm h}|\leq|\xi_3|}
|\xi_{\rm h}|^{-1}
\bigl|\cF\bigl(-\divh v_0^{\rm h}\bigr)(\xi)\bigr|^2 d\xi\\
&\leq \int_{\R^3}\left(|\xi_3||\widehat{v}_0^3(\xi)|^2+|\xi_\h||\widehat{v}_0^\h(\xi)|^2\right)\,d\xi\\
& \leq\|v_0\|^2_{H^{\frac12}}\lesssim\|\Omega_0\|^2_{L^{\frac32}}.
\end{split}\end{equation*}
This implies that under the assumption of Theorem \ref{thmmain},  $M_0$ determined by \eqref{thmmaincondi} is well-defined.
Furthermore,
if $\|\Omega_0\|_{L^{\f32}}$ is sufficiently small,
 \eqref{thmmaincondi} holds for $\nu=1$.
Hence in particular,  Theorem \ref{thmmain} ensures the global well-posedness of the classical 3-D Navier-Stokes system
with  $\|\Omega_0\|_{L^{\f32}}$ being sufficiently small.
\end{rmk}

We  point out that
the main idea used to prove Theorem \ref{thmmain} can  be adapted to study the global well-posedness of the classical
3-D Navier-Stokes equations with a fast variable:
\begin{equation} \label{S1eq1}
\qquad \left\{\begin{array}{l}
\displaystyle \pa_t u + u\cdot\nabla u -\Delta u+\nabla \Pi=0, \qquad (t,x)\in\R^+\times\R^3, \\
\displaystyle \dv\, u = 0, \\
\displaystyle  u|_{t=0}=v_{0,\nu}(x_\h,\nu x_3)=(\nu v_{0}^\h(x_\h,\nu x_3),v_{0}^3(x_\h,\nu x_3)).
\end{array}\right.
\end{equation}
Let $u(t,x)\eqdefa \left(\nu v^\h(t,x_\h,\nu x_3), v^3(x_\h,\nu x_3)\right) $ and $\Pi(t,x)\eqdefa \nu P(t,x_\h,\nu x_3).$
Then $(v, P)$ verifies
\begin{equation}\label{scaleNS}
\qquad \left\{\begin{array}{l}
\displaystyle \pa_t v + \nu v\cdot\nabla v -\Delta_\nu v+\nabla_{\nu^2} P=0, \qquad (t,x)\in\R^+\times\R^3, \\
\displaystyle \na\cdot v = 0, \\
\displaystyle  v|_{t=0}=v_{0}.
\end{array}\right.
\end{equation}
\eqref{scaleNS} is closely related to the following system:
\begin{equation}\label{NSscal}
\qquad \left\{\begin{array}{l}
\displaystyle \pa_t v + v\cdot\nabla_\nu v -\Delta_\nu v+\nabla_\nu P=0, \qquad (t,x)\in\R^+\times[0,L_1]\times[0,L_2]\times [0,1], \\
\displaystyle \na_\nu\cdot v = 0, \\
\displaystyle  v|_{t=0}=v_{0,\nu}=(\nu v_{0}^\h,v_{0}^3),
\end{array}\right.
\end{equation}
which is  the rescaled Navier-Stokes system (see (2.4) of \cite{RS93}) arising from the study of 3-D Navier-Stokes system
on thin domains, $[0,L_1]\times[0,L_2]\times\bigl[0,1/\nu\bigr].$ In \cite{RS93} (see also \cite{IRS07,KZ07,TZ96}), Raugel and Sell proved the global well-posedness of \eqref{NSscal} in a periodic
domain, $[0,L_1]\times[0,L_2]\times [0,1],$ provided that $\nu$ is sufficiently large compared to the initial data. The main ideas in \cite{RS93,IRS07, KZ07,TZ96} is to decompose the solution $v$ of \eqref{NSscal} as
\beq\label{S1eq8}
v={\rm M}(v)+w \with {\rm M}(v)(t,x_\h)\eqdefa\int_0^1v(t,x_\h,x_3)\,dx_3.
\eeq
 Then the authors exploited the fact that: 2-D  Navier-Stokes
system is globally well-posed for any data in $L^2,$ and the fact that: 3-D Navier-Stokes system is globally well-posedness with
 small regular initial data, to prove that the solutions of \eqref{NSscal}
can be split as the sum of a 2-D large solution  and a 3-D small solution of $(NS)$.

We remark that in the whole space case, we do not know how to define the average of the velocity field on the vertical variable.
 Thus it is not clear how to apply the ideas in \cite{RS93,IRS07,KZ07,TZ96}
to solve \eqref{scaleNS}. Our principle result concerning the well-posedness of the system \eqref{scaleNS} is as follows:

\begin{thm}\label{thmnu}
{\sl Consider the re-scaled Navier-Stokes system \eqref{scaleNS}
 with initial data $v_0$ satisfying
$\dive v_0=0$ and $\Omega_0\in L^{\f32}.$
There exist small enough positive constants, $c_1,~c_2,$
such that if
\begin{equation}\begin{split}\label{smallcondition}
&\|\omega_0\|_{L^{\f32}}\leq c_1\nu^{-\f23}\|\nablah v^3_0\|_{L^{\f32}}^{\f12},
\qquad \nu^{-\f23}\|\nablah v^3_0\|_{L^{\f32}}\leq c_2 \andf\\
&\|\pa_3 v^3_0\|_{L^{\f32}}\|\nablah v^3_0\|_{L^{\f32}}^{\f12}\leq c_1c_2,
\qquad c_1^{\f23}\nu^{-1}\|\nablah v^3_0\|_{L^{\f32}}\leq\|\pa_3 v^3_0\|_{L^{\f32}},
\end{split}\end{equation}
or
\begin{equation}\label{smallcondition2}\begin{split}
\|\omega_0\|_{L^{\f32}}\leq c_1&\nu^{-\f23}\|\nablah v^3_0\|_{L^{\f32}}^{\f12},
\quad \nu^{-\f23}\|\nablah v^3_0\|_{L^{\f32}}\leq c_2,\\
&\andf \|\nabla v^3_0\|_{L^{\f32}}\|\nablah v^3_0\|_{L^{\f32}}^{\f12}\leq c_1c_2,
\end{split}\end{equation}
then the system \eqref{scaleNS} has
a unique global solution $v\in \cE_\infty$.
}\end{thm}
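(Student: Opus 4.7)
The plan is to adapt the vorticity-and-third-component framework underlying Theorem \ref{thmmain} to the rescaled system \eqref{scaleNS}, while carefully tracking the two extra occurrences of $\nu$: one multiplying the convection $v\cdot\na v$, and a second (squared) in $\nabla_{\nu^2}P$. Since $\dive v=0$ is preserved, taking the divergence of the momentum equation yields $\Delta_\nu P=-\nu\,\dive(v\cdot\na v)$, so that the term $\nu^2\pa_3 P$ appearing in the equation for $v^3$ reduces to a Riesz-type multiplier applied to a cubic nonlinearity; this will be the critical quantity to control.

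First, I would derive the evolution equations for $\omega=\pa_1v^2-\pa_2v^1$ and for $v^3$ from \eqref{scaleNS}. A direct computation using $\dive v=0$ gives
\beno
\pa_t\omega+\nu\, v\cdot\na\omega-\Delta_\nu\omega&=&\nu(\pa_3v^3)\omega-\nu\,\pa_3\vh\cdot\nablah^\perp v^3,\\
\pa_tv^3+\nu\, v\cdot\na v^3-\Delta_\nu v^3+\nu^2\pa_3P&=&0.
\eeno
The horizontal velocity $\vh$ is then reconstructed from $\omega$ and from $\pa_3v^3=-\divh\vh$ by the two-dimensional Biot--Savart law, yielding the splitting $\vh=\vcurl+\vdiv$ of the notation section.

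Next, I would perform an $L^{\f32}$ energy estimate on $\omega$, testing against $\omega|\omega|^{-\f12}$ so that the dissipative contribution becomes $\|\nabla_\nu\omss\|_{L^2}^2$, together with an analogous $L^{\f32}$-type estimate on $\nablah v^3$ and on $\pa_3v^3$. Each nonlinear contribution now carries an extra factor $\nu$; however, every term in which a vertical derivative falls on an unknown can be balanced against the vertical part of the dissipation, $\nu^2\|\pa_3\cdot\|^2$, producing, through Young's inequality, an effective small parameter $\nu^{-\f23}\|\nablah v_0^3\|_{L^{\f32}}$. This is precisely the quantity required to be small in \eqref{smallcondition} and \eqref{smallcondition2}. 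A paraproduct analysis using anisotropic Littlewood--Paley decomposition, combined with the embeddings associated with the spaces $\hh$ and $\hm$, then bounds each cubic nonlinear integral by the dissipative norms times one of the three smallness combinations $\|\omega_0\|_{L^{\f32}}$, $\|\pa_3v^3_0\|_{L^{\f32}}\|\nablah v^3_0\|_{L^{\f32}}^{\f12}$, or $\|\nabla v^3_0\|_{L^{\f32}}\|\nablah v^3_0\|_{L^{\f32}}^{\f12}$ furnished by \eqref{smallcondition} or \eqref{smallcondition2}. Combining these estimates closes an a priori bound, and global existence and uniqueness in $\cE_\infty$ then follow by a standard continuation argument exactly as for Theorem \ref{thmmain}.

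The main obstacle is the pressure term $\nu^2\pa_3P$ in the equation for $v^3$: symbolically $\nu^2\pa_3P\sim\nu^3\pa_3\Delta_\nu^{-1}\dive(v\cdot\na v)$, and naive estimates lose one power of $\nu$. To regain it, I would exploit the divergence-free relation $\pa_3v^3=-\divh\vh$ to trade the vertical derivative on the pressure for horizontal derivatives on $\vh$, which in turn are controlled by the small vorticity $\omega$ via Biot--Savart. A secondary difficulty is that \eqref{smallcondition} uses a \emph{lower} bound on $\|\pa_3v_0^3\|_{L^{\f32}}$ to close the trilinear estimate in the regime where $\pa_3v^3$ dominates $\nablah v^3$, whereas \eqref{smallcondition2} replaces this lower bound by direct control on the full gradient $\|\nabla v_0^3\|_{L^{\f32}}$; reconciling the two requires treating these regimes separately in the final bootstrap step.
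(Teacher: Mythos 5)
Your plan follows the paper's proof in all essentials: the reformulation in terms of $(\omega,v^3)$ with the extra powers of $\nu$ on the quadratic terms, the $L^{\f32}$ energy estimate for $\omega$ obtained by testing against $\omega|\omega|^{-\f12}$, the observation that each vertical derivative can be paired with the dissipation $\|\nu\pa_3\,\cdot\,\|_{L^2}$ so that the explicit factors of $\nu$ are converted into the small parameter $\nu^{-\f23}\|\nablah v^3_0\|_{L^{\f32}}$, the identification of the pressure contribution $\nu^2\pa_3 P\sim\nu^3\pa_3\Delta_\nu^{-1}\dive(v\cdot\na v)$ (whose worst piece is essentially $\nu^3\pa_3\Delta_\nu^{-1}(\omega^2)$ and forces the smallness of $\omega$ for all time), and the bootstrap closed via the one-component blow-up criterion of Theorem \ref{thm1.4ofCZ}. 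Your reading of the role of the lower bound on $\|\pa_3v^3_0\|_{L^{\f32}}$ in \eqref{smallcondition} versus the full-gradient control in \eqref{smallcondition2} also matches what happens in the paper's continuity argument.

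The one place where your plan, as written, would run into trouble is the proposed ``$L^{\f32}$-type estimate on $\nablah v^3$.'' The paper deliberately does \emph{not} do this: it estimates $v^3$ in $\hs=H^{\f23,-\f16}$ (equivalently $\nablah v^3$ in $\hf=H^{-\f13,-\f16}$), a norm with the same scaling as $\|\nablah v^3\|_{L^{\f32}}$, precisely because the pressure term $\nu^3\nablah\pa_3\Delta_\nu^{-1}\bigl(\vh\cdot\nablah v^3\bigr)$ must be handled by transferring the outer $\pa_3$ onto the test function through integration by parts. In an $L^{\f32}$ energy framework the test function is $\nablah v^3|\nablah v^3|^{-\f12}$, and integrating by parts against it produces derivatives of $|\nablah v^3|^{-\f12}$ that you cannot control; in the Hilbertian anisotropic Sobolev setting the transfer of $\pa_3$ costs nothing. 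You in fact propose exactly such an integration by parts to treat the pressure, so the two halves of your plan are incompatible as stated. The repair is simply to replace the $L^{\f32}$ estimate of $\nablah v^3$ by the scaling-equivalent $\hf$ estimate, and correspondingly to run the product laws of Lemma \ref{lemproductlaw} and Proposition \ref{lemomega} in the spaces $\hs$, $\hf$ rather than in $\hh$, $\hm$, which are the spaces adapted to Theorem \ref{thmmain} and not to this part of the argument.
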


In particular, the above theorem ensures the global well-posedness of \eqref{S1eq1} provided that  the profile of the initial data
satisfying \eqref{smallcondition}
or \eqref{smallcondition2}.
For the special case when $\om_0$ vanishes, we have the following
direct consequence:
\begin{cor}\label{col1}
{\sl For any $\varphi\in W^{1,\f32}$
with $\|\nabla\varphi\|_{L^{\f32}}^2\|\nablah\varphi\|_{L^{\f32}}$ being
sufficiently small, \eqref{S1eq1} with initial data
\beq\label{ping3} u_{0,\nu}(x)=\left(-\nu\nablah\D_{\rm h}^{-1}\pa_3\varphi,\varphi\right)(x_\h,\nu x_3),\eeq
 has a unique global solution solution
provided that  $\nu$ is so large  that
$$\nu\geq C_2\|\nablah\varphi\|_{L^{\f32}}^{\f32}$$ for some positive constant $C_2$.
}\end{cor}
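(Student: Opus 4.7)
The plan is to deduce Corollary \ref{col1} directly from Theorem \ref{thmnu} via the rescaling between \eqref{S1eq1} and \eqref{scaleNS} recorded between the two statements. I choose the profile $v_0=(v_0^\h,v_0^3)$ by setting $v_0^3\eqdefa \varphi$ and $v_0^\h\eqdefa -\nablah\Delta_{\rm h}^{-1}\pa_3\varphi$; with this choice the formula \eqref{ping3} is precisely $u_{0,\nu}(x)=(\nu v_0^\h,v_0^3)(x_\h,\nu x_3)=v_{0,\nu}(x_\h,\nu x_3)$. Two direct computations give $\dive v_0=-\Laph\Delta_{\rm h}^{-1}\pa_3\varphi+\pa_3\varphi=0$ and
$$\omega_0=\pa_1 v_0^2-\pa_2 v_0^1=-\pa_1\pa_2\Delta_{\rm h}^{-1}\pa_3\varphi+\pa_2\pa_1\Delta_{\rm h}^{-1}\pa_3\varphi=0.$$

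Since $\omega_0\equiv 0$, the first inequality of \eqref{smallcondition2} is trivially satisfied. With $v_0^3=\varphi$, the remaining two inequalities become $\nu^{-2/3}\|\nablah\varphi\|_{L^{3/2}}\leq c_2$ and $\|\nabla\varphi\|_{L^{3/2}}\|\nablah\varphi\|_{L^{3/2}}^{1/2}\leq c_1c_2$. The first is equivalent to $\nu\geq c_2^{-3/2}\|\nablah\varphi\|_{L^{3/2}}^{3/2}$, so choosing $C_2\eqdefa c_2^{-3/2}$ matches the hypothesis on $\nu$ in the corollary; squaring the second gives $\|\nabla\varphi\|_{L^{3/2}}^2\|\nablah\varphi\|_{L^{3/2}}\leq (c_1c_2)^2$, which is exactly the smallness assumed in the statement (with the explicit threshold $(c_1c_2)^2$).

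Applying Theorem \ref{thmnu} to this $v_0$ then produces a unique global solution $v\in\cE_\infty$ of \eqref{scaleNS}. To return to \eqref{S1eq1}, define $u(t,x)\eqdefa (\nu v^\h(t,x_\h,\nu x_3),\,v^3(t,x_\h,\nu x_3))$ and $\Pi(t,x)\eqdefa \nu P(t,x_\h,\nu x_3)$; the chain-rule calculation stated just before \eqref{scaleNS} shows that $(u,\Pi)$ solves \eqref{S1eq1} with initial datum $u_{0,\nu}$, and uniqueness transfers from $v$ to $u$ because the rescaling is invertible. The only non-trivial point in this derivation is the bookkeeping of constants in the middle step; substantively the corollary is just the $\omega_0\equiv 0$ specialisation of Theorem \ref{thmnu}, realised by the Biot--Savart-type choice $v_0^\h=-\nablah\Delta_{\rm h}^{-1}\pa_3 v_0^3$, which trivialises the vorticity condition in \eqref{smallcondition2} and leaves only hypotheses on the scalar profile $\varphi$.
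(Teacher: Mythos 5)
Your proposal is correct and is exactly the route the paper intends: the authors state Corollary \ref{col1} as the $\omega_0\equiv 0$ specialisation of Theorem \ref{thmnu} under condition \eqref{smallcondition2}, realised by the divergence-free profile $v_0=(-\nablah\D_{\rm h}^{-1}\pa_3\varphi,\varphi)$, and your verification of the three inequalities and of the rescaling back to \eqref{S1eq1} is the routine bookkeeping they leave implicit. No gaps.
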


\begin{rmk}
For arbitrary  smooth functions $f(x_\h)$ and $g(x_3)$ with $\na_\h f\in \cS(\R^2)$ and $\pa_3g\in \cS(\R), $
we take $\varphi(x)\eqdefa \D_\h f(x_\h)g(x_3).$ Then the corresponding initial data given by \eqref{ping3} reads
\beno
u_{0,\nu}(x)=\left(-\nu \na_\h f(x_\h)\p_3g(\nu x_3), \D_\h f(x_\h)g(\nu x_3)\right).
\eeno
In particular, let us take $f$
so that \beq
\label{ping4} |\na_\h f(x_\h)|\geq 2\quad\mbox{ in some small neighborhood of}\quad x_\h=0.
\eeq  Then we can select
$\nu$ so large that
$\bigl|e^{t\D_{\rm h}}\na_\h f(x_\h)\bigr|\geq 1$ for any
$(t,x_\h)\in P_{\nu^{-1}},$ where $P_R$ denotes $[0,R^2]\times B_R$.
Furthermore,  by virtue of \eqref{bmonorm}, we have
\begin{align*}
\|u_{0,\nu}^\h\|_{\text{BMO}^{-1}}&\geq\nu^{\f32}
\Bigl(\int_{P_{\nu^{-1}}}\bigl|e^{t\D}\bigl(\nu \na_\h f(x_\h)\p_3 g(\nu x_3)
\bigr)\bigr|^2\,dxdt\Bigr)^{\f12}\\
&=\nu^{\f52}\Bigl(\int_{P_{\nu^{-1}}}\bigl|(e^{t\D_\h}\na_\h f)(x_\h)
\cdot(e^{\nu^2t\pa_3^2}\p_3 g)(\nu x_3)\bigr|^2
\,dxdt\Bigr)^{\f12}\\
&\geq C\nu^{\f32}\Bigl(\int_0^{\nu^{-2}}\int_{-\f12\nu^{-1}}^{\f12\nu^{-1}}
\bigl|(e^{\nu^2t\pa_3^2}\p_3 g)(\nu x_3)\bigr|^2
\,dx_3dt\Bigr)^{\f12}\\
&=C\bigl\|e^{t\pa_3^2}\p_3 g\bigr\|_{L^2([0,1]\times[-\f12,\f12])}.
\end{align*}
Notice that for  any smooth function, $g(x_3),$ with $\pa_3 g\in\cS(\R),$ we can always find smooth function, $f(x_\h),$ with $\na_\h f\in \cS(\R^2)$ so that \eqref{ping4} holds and $\|\nabla\varphi\|_{L^{\f32}}^2\|\nablah\varphi\|_{L^{\f32}}$ is sufficiently small (for instance $\na_\h f(x_\h)=\chi(R x_\h)$ for $R$ large enough,  where
$\chi=(\chi_1, \chi_2)\in C_0^\infty(\R^2)$ and $\chi$ satisfies $|\chi(x_\h)|\geq 2$ for $x_\h$ near $0.$)
Hence Theorem \ref{thmnu} and Corollary \ref{col1} can not deduced from the end-point result in \cite{kochtataru}.
\end{rmk}

\no{\it Sketch of the paper}.
Motivated by \cite{CZ}, we first reformulate $(NS_\nu)$  as
\begin{equation}\label{omegav3}
\qquad \left\{\begin{array}{l}
\displaystyle \pa_t\omega+v\cdot\nabla\omega -\Delta_{\nu}\omega= \partial_3v^3\omega +\pa_2v^3\pa_3v^1-\pa_1v^3\pa_3v^2,\\
\displaystyle \pa_t v^3 + v\cdot\nabla v^3 -\Delta_{\nu} v^3=
-\partial_3\D^{-1} \Bigl(\sum_{\ell,m=1}^3 \partial_\ell
v^m\partial_mv^\ell\Bigr), \\
\displaystyle  \omega|_{t=0}=\omega_0,\ v^3|_{t=0}=v^3_0.
\end{array}\right.
\end{equation}
Then due to $\divh \vh=-\pa_3 v^3,$ given $(\om, v^3),$  by Biot-Savart's law, we  write
\begin{equation}\label{helmdecom}
v^{\rm h}=\vcurl+\vdiv,\quad \mbox{where}\quad \vcurl
\eqdefa\nablah^\perp \D_{\rm h}^{-1} \om \quad \mbox{and} \quad
 \vdiv\eqdefa -\nablah\D_{\rm h}^{-1}\partial_3v^3.
\end{equation}

Let us recall the following results from \cite{CZ}:

\begin{thm}\label{thm1.4ofCZ}
{\sl Let us consider an initial data $v_0$ with vorticity $\Omega_0\in L^{\frac32}$.
Then a unique maximal solution $v$ of $(NS_\nu)$ exists in the space
$\cE_{T^\ast}$ for some
maximal existing time $T^\ast>0$, and this solution satisfies
$$\Omega\eqdefa\curl v\in C\bigl([0,T^\ast[,L^{\f32}\bigr)
\quad \mbox{and}\quad |\nabla\Omega|\cdot|\Omega|^{-\f14}\in
L^2_{\rm{loc}}\bigl([0,T^\ast[; L^2\bigr)$$
Moreover, if $T^\ast<\infty$,  for any $p\in]4,6[$, we have
\begin{equation}\label{blowupCZ5}
\int_0^{T^\ast}\|v^3(t)\|_{H^{\frac12+\frac2p}}^p\,dt=\infty.
\end{equation}
}\end{thm}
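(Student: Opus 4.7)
The statement bundles three assertions: local existence of a Fujita--Kato solution in $\cE_{T^\ast}$, persistence of the vorticity in $L^{\f32}$ together with the weighted quantity $|\nabla\Omega|\,|\Omega|^{-\f14}\in L^2_{\rm loc}$, and the one-component blowup criterion \eqref{blowupCZ5}. My plan is to treat these in that order, since only the last one is genuinely delicate.

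For local existence, the hypothesis $\Omega_0\in L^{\f32}$ combined with Biot-Savart and Sobolev embedding gives $v_0\in L^3\hookrightarrow H^{\f12}$. The operator $-\Delta_\nu$ is elliptic and its heat semigroup satisfies the same dispersive bounds as $e^{t\D}$, up to constants depending on~$\nu$. Hence the standard Fujita--Kato fixed-point argument produces a unique maximal solution $v\in \cE_{T^\ast}$. For the propagation of $\|\Omega\|_{L^{\f32}}$, I would take the curl of $(NS_\nu)$ to obtain $\pa_t\Omega+v\cdot\na\Omega-\Delta_\nu\Omega=\Omega\cdot\na v$, test against $|\Omega|^{-\f12}\Omega$, and use the elementary identity
\[
\int(-\Delta_\nu\Omega)\cdot|\Omega|^{-\f12}\Omega\,dx \;\sim\; \bigl\|\nabla_\nu(|\Omega|^{\f34})\bigr\|_{L^2}^2\;\sim\;\int |\nabla_\nu\Omega|^2|\Omega|^{-\f12}\,dx,
\]
which simultaneously produces the $C([0,T^\ast[;L^{\f32})$ bound and the weighted coercive term $|\nabla\Omega|\,|\Omega|^{-\f14}\in L^2_{\rm loc}$. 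The stretching term $\Omega\cdot\na v$ is handled by H\"older, Calder\'on--Zygmund, and Sobolev to close a differential inequality of Osgood type, valid up to $T^\ast$.

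The core difficulty lies in the blowup criterion. Following the philosophy of \cite{CZ}, I would argue by contradiction: suppose $T^\ast<\infty$ and some $p\in]4,6[$ with $\int_0^{T^\ast}\|v^3\|_{H^{\f12+\f2p}}^p\,dt<\infty$. The aim is to show that $\|\Omega(t)\|_{L^{\f32}}$ stays bounded up to $T^\ast$, contradicting maximality. The key idea is to use the Biot-Savart decomposition \eqref{helmdecom}: the divergence-free constraint $\divh v^{\rm h}=-\pa_3 v^3$ forces $\vdiv=-\nablah\D_\h^{-1}\pa_3 v^3$ to be controlled directly by $v^3$, so the only missing piece is $\vcurl=\nablah^\perp\D_\h^{-1}\omega$, which is determined by the horizontal vorticity equation (the first line of \eqref{omegav3}). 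The right-hand side $\pa_3v^3\,\omega+\pa_2v^3\pa_3v^1-\pa_1v^3\pa_3v^2$ is bilinear in $(v^3,\omega)$ and $v^3$ itself, so an anisotropic Littlewood--Paley estimate combined with the hypothesis on $v^3$ allows one to close a Gr\"onwall inequality for $\|\omega\|_{L^{\f32}}$, and then for $\|\Omega\|_{L^{\f32}}$. The range $p\in]4,6[$ arises from the bookkeeping in the bilinear estimates, where the horizontal and vertical H\"older exponents must be compatible with the scaling of $H^{\f12+\f2p}$ and with the time integrability in $L^p_T$.

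The main obstacle, therefore, is step three: extracting full control of the horizontal dynamics (in particular of $\omega$) from a single-component Sobolev bound on~$v^3$, which forces the use of anisotropic function spaces and a careful dyadic decomposition. Once this machinery is in place, the persistence estimate from step two can be iterated along the sequence of local solutions, yielding a uniform bound on $\|\Omega\|_{L^{\f32}}$ on $[0,T^\ast[$ and contradicting finiteness of~$T^\ast$. I expect that, modulo routine adaptations to account for the anisotropic viscosity $\Delta_\nu$ in place of $\Delta$, the argument of \cite{CZ} transfers verbatim since the heat kernel of $\Delta_\nu$ enjoys the same Bernstein-type inequalities as that of~$\Delta$.
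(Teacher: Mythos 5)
The paper does not actually prove this statement: it is recalled verbatim from \cite{CZ} (it is the local well-posedness and one-component blow-up criterion of Chemin--Zhang, transplanted to $(NS_\nu)$), so there is no internal proof to compare against line by line. Your outline does follow the strategy of \cite{CZ}, and of Section 3 of this paper which re-runs the same machinery: decompose $v^{\rm h}=\vcurl+\vdiv$ via \eqref{helmdecom}, observe that $\vdiv$ is slaved to $v^3$, and close estimates on $\omega$ by testing its equation against $\omega_{\f12}$ so that the coercive term $\bigl\|\nabla_\nu\omss\bigr\|_{L^2}^2$ appears. Two corrections to your write-up. First, the embedding in step one is backwards: $L^3(\R^3)$ does \emph{not} embed into $H^{\f12}(\R^3)$ (the paper's own chain is $H^{\f12}\hookrightarrow L^3$). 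The correct route is Biot--Savart plus Calder\'on--Zygmund, $\|\nabla v_0\|_{L^{3/2}}\lesssim\|\Omega_0\|_{L^{3/2}}$, followed by the Sobolev embedding $\dot W^{1,3/2}(\R^3)\hookrightarrow \dot H^{\f12}(\R^3)$; this is exactly how the paper's Remark 1.1 gets $v_0\in H^{\f12}$. Second, the stretching term in your $L^{3/2}$ vorticity estimate cannot be closed by a critical-norm Gr\"onwall: the natural bound $\bigl|\int\Omega\cdot\nabla v\,|\Omega|^{-\f12}\Omega\,dx\bigr|\lesssim\|\Omega\|_{L^{3/2}}\bigl\|\nabla\Omega_{\f34}\bigr\|_{L^2}^2$ is not absorbable unless $\|\Omega\|_{L^{3/2}}$ is small. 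To get persistence on $[0,T]$ for every $T<T^\ast$ you must instead spend the subcritical information $v\in L^2_T H^{3/2}$ from $\cE_T$, e.g.\ $\nabla v\in L^2_T L^3$, which linearizes the Gr\"onwall; your appeal to an ``Osgood type'' inequality hides this.

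More substantively, step three is where the actual theorem lives, and your sketch stops short of the genuine difficulty. The source term $\pa_2v^3\pa_3v^1-\pa_1v^3\pa_3v^2$ contains $\pa_3\vcurl=\nablah^{\perp}\Laph^{-1}\pa_3\omega$, i.e.\ a full derivative of $\omega$; an $L^{3/2}$ estimate of $\omega$ alone cannot see this term, and one is forced to trade it against the weighted dissipation $\bigl\|\nabla_\nu\omss\bigr\|_{L^2}^2$ exactly as in Lemma \ref{lemomhalf} and Proposition \ref{propestimateomss}. Moreover the hypothesis is only $v^3\in L^p_{T^\ast}\bigl(H^{\f12+\f2p}\bigr)$, which is not strong enough to close the $\omega$ estimate by itself: one must simultaneously propagate $L^\infty_t$ and $L^2_t$ bounds on anisotropic norms of $v^3$ (the analogues of $\|v^3\|_{\hh}$ and $\|\pa_3 v^3\|_{\hm}$) coupled with the $\omega$ estimate, and the restriction $p\in\,]4,6[$ comes out of this coupled bootstrap rather than from a single bilinear bookkeeping. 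So your proposal correctly identifies the objects and the architecture of the \cite{CZ} proof, but as written it asserts rather than establishes the closure of the key a priori estimate; as a justification for \emph{recalling} the theorem it is fine, as a proof it is incomplete.
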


According to Theorem \ref{thm1.4ofCZ},
 in order to prove Theorem \ref{thmmain},
it remains  to verify that
\eqref{blowupCZ5} can never be satisfied under the assumption \eqref{thmmaincondi}. It is easy to observe that
\begin{equation}\begin{split}\label{v3anisotropic}
\|v^3\|_{H^{\frac12+\frac2p}}&\leq
\|v^3\|_{H^{\frac12+\frac2p,0}}+\|v^3\|_{H^{0,\frac12+\frac2p}}\\
&\leq\|v^3\|_{H^{\frac12,0}}^{1-\frac2p}
\|\nabla_\h v^3\|_{H^{\frac12,0}}^{\frac2p}
+\|\pa_3 v^3\|_{\hm}^{\frac{1}{2}-\frac2p}\|v^3\|_{\hh}^{\f12}
\|\pa_3^2 v^3\|_{\hm}^{\frac 2p}.
\end{split}\end{equation}
It reduces to derive the $\hh$ estimate for $v^3$
and the $\hm$ estimate of $\pa_3v^3$, that is,
the $\hm$ estimate of $\nabla v^3$.
In view of  \eqref{omegav3} and \eqref{helmdecom}, in order to close the estimates,
we also need the $L^{\f32}$ estimate for $\omega.$
As a matter of fact,  under the assumption \eqref{thmmaincondi},
we can indeed achieve the estimate \eqref{thmmainestimate}.
This in turn shows that $\|v^3\|_{L^p_T(H^{\frac12+\frac2p})}$
is finite for any $p\in ]4,6[$ and any $T<\infty.$
  Then theorem \ref{thmmain} follows from Theorem \ref{thm1.4ofCZ}.

Along the same line, we can equivalently
reformulate the system \eqref{scaleNS} as
\begin{equation}\label{omegav3nu}
\qquad \left\{\begin{array}{l}
\displaystyle \pa_t\omega+\nu v\cdot\nabla\omega -\Delta_{\nu}\omega= \nu\bigl(\partial_3v^3\omega +\pa_2v^3\pa_3v^1-\pa_1v^3\pa_3v^2\bigr),\\
\displaystyle \pa_t v^3 +\nu v\cdot\nabla v^3-\Delta_{\nu} v^3=
-\nu^2\partial_3\D^{-1}\Bigl(\sum_{\ell,m=1}^3\nu \partial_\ell
v^m\partial_mv^\ell\Bigr), \\
\displaystyle \omega|_{t=0}=\omega_0,\ v^3|_{t=0}=v^3_0.
\end{array}\right.
\end{equation}
Obviously, the difference between the systems \eqref{omegav3} and \eqref{omegav3nu}
is that there appears powers of  $\nu$ in the front of the quadric terms in \eqref{omegav3nu},
which makes it more difficult to perform the uniform estimates. Indeed the most dangerous term
is $-\nu^2\partial_3\D^{-1} \bigl(\sum_{\ell,m=1}^2\nu \partial_\ell
v_{\curl}^m\partial_m v_{\curl}^\ell\bigr)$, which is more or less the same
as $-\nu^3\partial_3\D^{-1}\left(\omega^2\right)$. Thus if we want to close the previous estimates
for sufficiently large $\nu$, it seems necessary that $\omega$ should
be small for all time.
That is the reason why we need  the smallness condition
\eqref{smallcondition} in Theorem \ref{thmnu}.

\setcounter{equation}{0}
\section{Preliminaries}

We first recall some basic facts on anisotropic Littlewood-Paley theory
  from \cite{BCD}:
\begin{equation}\begin{split}\label{defparaproduct}
&\Delta_ja=\cF^{-1}(\varphi(2^{-j}|\xi|)\widehat{a}),
 \quad \Delta_k^{\rm h}a=\cF^{-1}(\varphi(2^{-k}|\xi_{\rm h}|)\widehat{a}),
 \quad \Delta_\ell^{\rm v}a =\cF^{-1}(\varphi(2^{-\ell}|\xi_3|)\widehat{a}),\\
&S_ja=\cF^{-1}(\chi(2^{-j}|\xi|)\widehat{a}),
\quad S^{\rm h}_ka=\cF^{-1}(\chi(2^{-k}|\xi_{\rm h}|)\widehat{a}),
\quad\ S^{\rm v}_\ell a =\cF^{-1}(\chi(2^{-\ell}|\xi_3|)\widehat{a}),
\end{split}\end{equation}
where $\xi_{\rm h}=(\xi_1,\xi_2),$ $\cF a$ and
$\widehat{a}$ denote the Fourier transform of the distribution $a,$
$\chi(\tau)  $ and~$\varphi(\tau)$ are smooth functions such that
 \beno
&&\Supp \varphi \subset \Bigl\{\tau \in \R\,/\  \ \frac34 \leq
|\tau| \leq \frac83 \Bigr\}\andf \  \ \forall
 \tau>0\,,\ \sum_{j\in\Z}\varphi(2^{-j}\tau)=1,\\
&&\Supp \chi \subset \Bigl\{\tau \in \R\,/\  \ \ |\tau|  \leq
\frac43 \Bigr\}\quad \ \ \ \andf \  \ \, \chi(\tau)+ \sum_{j\geq
0}\varphi(2^{-j}\tau)=1.
 \eeno

\begin{defi}\label{anibesov}
{\sl Let us define the space $\bigl(B^{s_1}_{p,q_1}\bigr)_{\rm
h}\bigl(B^{s_2}_{p,q_2}\bigr)_{\rm v}$ (with  usual adaptation when $q_1$ or $q_2$ equal $\infty$) as the space of
homogenous  tempered distributions $u$ so that
$$
\|u\|_{\bigl(B^{s_1}_{p,q_1}\bigr)_{\rm
h}\bigl(B^{s_2}_{p,q_2}\bigr)_{\rm v}}\eqdefa \biggl(\sum_{k\in\Z}
2^{q_1ks_1} \Bigl(\sum_{\ell\in\Z}2^{q_2\ell s_2}\|\D_k^{\rm
h}\D_\ell^{\rm
v}u\|_{L^p}^{q_2}\Bigr)^{{q_1}/{q_2}}\biggr)^{1/{q_1}}
$$ is finite.
For the special case when $q_1=q_2=q$, we shall denote it briefly by
$B^{s_1,s_2}_{p,q}$.
}
\end{defi}

We remark that $B^{s_1,s_2}_{2,2}$ coincides with the classical
anisotropic Sobolev space $H^{s_1,s_2}$.

 \begin{lem}[A spacial case of Lemma 4.5 in \cite{CZ}]\label{lemproductlaw}
{\sl For any $s_1<1,~s_2\leq1$ with $s_1+s_2>0,$ and for
any $r_1<\f12,~r_2\leq\f12$ with $r_1+r_2>0$, we have
\begin{equation}\label{lemproduct2}
\|ab\|_{H^{s_1+s_2-1,r_1+r_2-\f12}}\lesssim\|a\|_{H^{s_1,r_1}}
\|b\|_{B^{s_2,r_2}_{2,1}}.
\end{equation}
When $s_1, s_2<1$ and $r_1, r_2<\frac12,$  one has
\ben
\label{lemproduct1}
\|ab\|_{H^{s_1+s_2-1,r_1+r_2-\f12}}&\lesssim&\|a\|_{H^{s_1,r_1}}
\|b\|_{H^{s_2,r_2}},\\
\label{lemproduct3}
\|ab\|_{(B^{-1}_{2,\infty})_\h(H^{r_1+r_2-\f12})_\v}
&\lesssim&\|a\|_{H^{s_1,r_1}}\|b\|_{H^{-s_1,r_2}}.
\een

}\end{lem}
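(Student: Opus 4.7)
The plan is to prove all three inequalities by combining an anisotropic Bony decomposition in the horizontal and vertical variables with anisotropic Bernstein inequalities. Writing
\[
ab = T^{\h}_a b + T^{\h}_b a + R^{\h}(a,b),
\]
and subsequently applying the analogous decomposition in the vertical direction, one obtains nine pieces of the form $\sum_{k',\ell'}\bigl(S^{\h}_{k'-1} S^{\v}_{\ell'-1} u\bigr)\bigl(\Delta^{\h}_{k'} \Delta^{\v}_{\ell'} w\bigr)$, together with the variants where low-frequency cut-offs are swapped with full blocks. For a fixed dyadic block $\Delta^{\h}_k \Delta^{\v}_\ell(ab)$, the Fourier support conditions restrict the range of $(k',\ell')$ to a bounded neighborhood of $(k,\ell)$ in the paraproduct pieces, and to the constrained regions $k' \geq k - N_0$ or $\ell' \geq \ell - N_0$ in the remainder pieces.

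Each piece is then estimated in $L^2$ via H\"older's inequality, Plancherel's theorem, and the anisotropic Bernstein bounds
\[
\|\Delta^{\h}_i f\|_{L^\infty_\h} \lesssim 2^i \|\Delta^{\h}_i f\|_{L^2_\h}, \qquad \|\Delta^{\v}_j f\|_{L^\infty_\v} \lesssim 2^{j/2} \|\Delta^{\v}_j f\|_{L^2_\v}.
\]
The Bernstein losses $2^{i(1-s_1)} 2^{j(\f12 - r_1)}$ that arise when $a$ is placed at low frequencies are absorbed by geometric summation thanks to $s_1 < 1$ and $r_1 < \f12$, while the conditions $s_1 + s_2 > 0$ and $r_1 + r_2 > 0$ ensure convergence of $\sum_{k' \geq k} 2^{-k'(s_1 + s_2)}$ and $\sum_{\ell' \geq \ell} 2^{-\ell'(r_1 + r_2)}$ arising from the remainder pieces.

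The three inequalities differ only in the summability prescribed on the sequence $c_{k,\ell} \eqdef 2^{k(s_1 + s_2 - 1)} 2^{\ell(r_1 + r_2 - \f12)} \|\Delta^{\h}_k \Delta^{\v}_\ell(ab)\|_{L^2}$. For \eqref{lemproduct2}, the $\ell^1$-type Besov norm $\|b\|_{B^{s_2, r_2}_{2,1}}$ absorbs the borderline cases $s_2 = 1$ and $r_2 = \f12$ through a direct summation in $k'$ and $\ell'$. For \eqref{lemproduct1}, a Cauchy--Schwarz step distributes an $\ell^2$ budget between $a$ and $b$, which forces the strict inequalities $s_2 < 1$ and $r_2 < \f12$. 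For \eqref{lemproduct3}, relaxing the horizontal target to $B^{-1}_{2,\infty}$ drops the summability requirement in the index $k$, which allows $b$ to carry \emph{negative} horizontal regularity $-s_1$ in place of $s_2$.

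The main obstacle will be the bookkeeping of the remainder-remainder piece, where neither $k'$ nor $\ell'$ is bounded above. The strategy is to split the double sum into the four regions determined by the signs of $k' - k$ and $\ell' - \ell$, and to verify convergence in each region separately by using the appropriate sign condition on $s_1 + s_2$ or $r_1 + r_2$ to control the Bernstein losses. Once this delicate summation is carried out, the three claimed inequalities follow by extracting the $\ell^2$, $\ell^2$ and $(\ell^\infty)_\h(\ell^2)_\v$ norms of $c_{k,\ell}$, respectively.
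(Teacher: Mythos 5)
The paper does not actually prove this lemma: it is imported verbatim as a special case of Lemma 4.5 in \cite{CZ}, where it is established by precisely the two-parameter (horizontal and vertical) Bony decomposition combined with the anisotropic Bernstein inequalities that you outline, so your proposal follows essentially the same route as the cited source. Your plan correctly locates where each hypothesis enters --- $s_1<1$ and $r_1<\frac12$ for the geometric sums over the low-frequency cut-offs of $a$, $s_1+s_2>0$ and $r_1+r_2>0$ for the convergence of the remainder pieces, the $\ell^1$-type norm $\|b\|_{B^{s_2,r_2}_{2,1}}$ for the endpoints $s_2=1$ and $r_2=\frac12$ in \eqref{lemproduct2}, and the $\ell^\infty$ horizontal summability of $(B^{-1}_{2,\infty})_{\rm h}$ for the zero-horizontal-regularity balance in \eqref{lemproduct3} --- and although it remains a plan rather than a worked-out proof, it is a sound one.
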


Let us recall the following anisotropic
Bernstein type lemma from \cite{CZ1, Pa02}:

\begin{lem}
\label{lemBern}
{\sl Let $\cB_{\h}$ (resp.~$\cB_{\rm v}$) a ball
of~$\R^2_{\h}$ (resp.~$\R_{\rm v}$), and~$\cC_{\h}$ (resp.~$\cC_{\rm v}$) a
ring of~$\R^2_{\h}$ (resp.~$\R_{\rm
v}$); let~$1\leq p_2\leq p_1\leq
\infty$ and ~$1\leq q_2\leq q_1\leq \infty.$ Then there holds:

\smallbreak\noindent If the support of~$\wh a$ is included
in~$2^k\cB_{\h}$, then
\[
\|\partial_{x_{\rm h}}^\alpha a\|_{L^{p_1}_{\rm h}(L^{q_1}_{\rm v})}
\lesssim 2^{k\left(|\al|+2\left(1/{p_2}-1/{p_1}\right)\right)}
\|a\|_{L^{p_2}_{\rm h}(L^{q_1}_{\rm v})}.
\]
If the support of~$\wh a$ is included in~$2^\ell\cB_{\rm v}$, then
\[
\|\partial_{x_3}^\beta a\|_{L^{p_1}_{\rm h}(L^{q_1}_{\rm v})}
\lesssim 2^{\ell\left(\beta+(1/{q_2}-1/{q_1})\right)} \|
a\|_{L^{p_1}_{\rm h}(L^{q_2}_{\rm v})}.
\]
If the support of~$\wh a$ is included in~$2^k\cC_{\h}$, then
\[
\|a\|_{L^{p_1}_{\rm h}(L^{q_1}_{\rm v})} \lesssim
2^{-kN}\sup_{|\al|=N} \|\partial_{x_{\rm h}}^\al a\|_{L^{p_1}_{\rm
h}(L^{q_1}_{\rm v})}.
\]
If the support of~$\wh a$ is included in~$2^\ell\cC_{\rm v}$, then
\[
\|a\|_{L^{p_1}_{\rm h}(L^{q_1}_{\rm v})} \lesssim 2^{-\ell N}
\|\partial_{x_3}^N a\|_{L^{p_1}_{\rm h}(L^{q_1}_{\rm v})}.
\] Here and in all that follows, $a\lesssim b$ means that
$a\leq Cb$ for some uniform constant $C$.
}
\end{lem}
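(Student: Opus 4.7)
The plan is to reduce all four bounds to the classical one-directional Bernstein inequality coupled with a vector-valued Young inequality in the perpendicular variable. The product structure of the frequency cut-offs in $\xi_\h$ alone or $\xi_3$ alone is exactly what lets the horizontal and vertical integration exponents decouple, so each estimate is really a single scalar Bernstein argument applied fibre-wise.

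For the two \emph{direct} inequalities I would fix $\phi_\h\in C_c^\infty(\R^2)$ (resp.\ $\phi_\v\in C_c^\infty(\R)$) equal to $1$ on $\cB_\h$ (resp.\ $\cB_\v$), so that $\wh a(\xi)=\phi_\h(2^{-k}\xi_\h)\wh a(\xi)$ and hence
$$\pa_{x_\h}^\al a(x_\h,x_3)=(K_k^\al*_\h a)(x_\h,x_3),\qquad K_k^\al(y)\eqdef 2^{(|\al|+2)k}K^\al(2^k y),$$
with $K^\al\eqdef \cF_\h^{-1}\bigl((i\xi_\h)^\al\phi_\h\bigr)\in\cS(\R^2)$ and $*_\h$ denoting convolution in $x_\h$ only. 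Applying the vector-valued Young inequality in the horizontal variable, with target Banach space $L^{q_1}_\v$ and exponents $1/r+1/p_2=1+1/p_1$, yields
$$\|\pa_{x_\h}^\al a\|_{L^{p_1}_\h(L^{q_1}_\v)}\leq \|K_k^\al\|_{L^r_\h}\|a\|_{L^{p_2}_\h(L^{q_1}_\v)},$$
and the scaling identity $\|K_k^\al\|_{L^r_\h}=2^{k(|\al|+2(1/p_2-1/p_1))}\|K^\al\|_{L^r}$ supplies the announced power of $2^k$. The vertical direct bound is identical after exchanging the roles of the two variables and replacing $\phi_\h$ by $\phi_\v$.

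For the two \emph{ring} inequalities I would exploit the fact that $\Supp\wh a$ is bounded away from zero. Pick $\wt\varphi\in C_c^\infty(\R^2\setminus\{0\})$ equal to $1$ on $\cC_\h$; since $|\xi_\h|\geq c>0$ on $\Supp\wt\varphi$, one may divide by $|\xi_\h|^{2N}$ and redistribute the monomials from $(\sum_j\xi_j^2)^N$ to obtain
$$\wt\varphi(\xi_\h)=\sum_{|\al|=N}(i\xi_\h)^\al\theta_\al(\xi_\h),\qquad \theta_\al\in C_c^\infty(\R^2\setminus\{0\}).$$
Using $\wh a(\xi)=\wt\varphi(2^{-k}\xi_\h)\wh a(\xi)$ and inverting the horizontal Fourier transform gives
$$a=\sum_{|\al|=N}2^{-kN}\,\Theta_k^\al*_\h\pa_{x_\h}^\al a,\qquad \Theta_k^\al(y)\eqdef 2^{2k}\Theta^\al(2^k y),\ \Theta^\al\eqdef \cF_\h^{-1}\theta_\al\in\cS(\R^2),$$
and Young's inequality with $r=1$ (so that $\|\Theta_k^\al\|_{L^1_\h}=\|\Theta^\al\|_{L^1}$ is independent of $k$) closes the estimate. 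The vertical ring inequality is obtained in exactly the same way with $\xi_3$ in place of $\xi_\h$. The only point requiring a bit of care is the vector-valued Young inequality $\|f*_\h g\|_{L^{p_1}_\h(X)}\leq \|f\|_{L^r_\h}\|g\|_{L^{p_2}_\h(X)}$ for a general Banach space $X$, which follows from scalar Young combined with Minkowski's integral inequality; this is really the observation that distinguishes the anisotropic statement from the classical isotropic one, and once it is in place the four bounds are a single routine computation carried out in the four possible configurations.
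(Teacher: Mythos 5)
Your proof is correct, and it is the standard argument (dilation of a fixed convolution kernel plus Young's inequality, with the ring case handled by dividing the cut-off by $|\xi|^{2N}$), which is precisely how this lemma is established in the sources \cite{CZ1, Pa02} that the paper cites; the paper itself states the lemma without proof, so there is nothing further to compare against. The one genuinely anisotropic point --- that Young's inequality must be applied in vector-valued form when the convolution variable carries the outer norm, and fibrewise when it carries the inner norm --- is exactly the point you isolate at the end, so the argument is complete.
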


The following Troisi inequality in \cite{Tro} will play an important role
in what follows.

\begin{lem}\label{lemTro}
{\sl Let $1\leq q_i<\infty~(i=1,\cdots,d)$ with $\sum_{i=1}^d q_i^{-1}>1$,
and $p=\f{d}{\sum_{i=1}^d q_i^{-1}-1}$.
Then for any $a\in C_0^\infty(\R^d)$, there holds
$$\|a\|_{L^p(\R^d)}\leq C\prod_{i=1}^d
\|\pa_i a\|_{L^{q_i}(\R^d)}^{\f1d}.$$
In the particular case when $d=3$, we have
\begin{equation}\label{ineqtro}
\|a\|_{L^6(\R^3)}\leq C\|\nablah a\|_{L^2(\R^3)}^{\f23}
\|\pa_3 a\|_{L^2(\R^3)}^{\f13}.
\end{equation}
}\end{lem}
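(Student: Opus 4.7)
The plan is to prove the Troisi inequality by combining the classical slicing argument of Loomis--Whitney/Gagliardo--Nirenberg with the substitution $a\mapsto |a|^s$ for a carefully chosen power $s$, which promotes $L^1$ control of partial derivatives into $L^{q_i}$ control.

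First I would dispatch the base case $q_1=\cdots=q_d=1$. For each $i$, the fundamental theorem of calculus yields $|a(x)|\leq F_i(\hat x_i):=\int_{\R}|\pa_i a(x_1,\ldots,y_i,\ldots,x_d)|\,dy_i$, where $\hat x_i$ denotes the $d-1$ variables other than $x_i$. Multiplying across $i$, extracting a $d$-th root, and integrating over $\R^d$ via the Loomis--Whitney inequality (iterated H\"older in $\R^d$ with exponent $d-1$) yields the standard Gagliardo--Nirenberg bound
$$\|a\|_{L^{d/(d-1)}(\R^d)}\leq \prod_{i=1}^d\|\pa_i a\|_{L^1(\R^d)}^{1/d}.$$
Next I would apply this to $|a|^s$ for a parameter $s>1$. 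Since $\pa_i|a|^s=s|a|^{s-1}\mathrm{sgn}(a)\,\pa_i a$, H\"older with conjugate exponents $(q_i,q_i')$ gives $\|\pa_i|a|^s\|_{L^1}\lesssim s\|a\|_{L^{(s-1)q_i'}}^{s-1}\|\pa_i a\|_{L^{q_i}}$, so the base estimate becomes
$$\|a\|_{L^{sd/(d-1)}}^s\lesssim s\prod_{i=1}^d\|a\|_{L^{(s-1)q_i'}}^{(s-1)/d}\|\pa_i a\|_{L^{q_i}}^{1/d}.$$
Choosing $s$ so that the $L^r$-norm of $a$ on the right matches that on the left forces $p=sd/(d-1)=d/(\sum_{i} 1/q_i-1)$; the $\|a\|_{L^p}^{s-1}$ factors then absorb into the left-hand side, yielding the stated inequality. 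The hypothesis $\sum 1/q_i>1$ is exactly what makes such an $s$ positive and finite.

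The special case \eqref{ineqtro} follows immediately by taking $d=3$ and $q_1=q_2=q_3=2$, which gives $p=6$ and $\|a\|_{L^6}\lesssim\prod_{i=1}^3\|\pa_i a\|_{L^2}^{1/3}$; the form stated in the lemma then follows from $\|\pa_j a\|_{L^2}\leq\|\nablah a\|_{L^2}$ for $j=1,2$. The main obstacle will be the exponent bookkeeping when the $q_i$ are genuinely different, because a single scalar $s$ cannot simultaneously satisfy $(s-1)q_i'=sd/(d-1)$ for all $i$. The standard fix is to apply the slicing argument with direction-dependent powers $s_1,\ldots,s_d$ on $|a|^{s_i}$ along the $i$-th slice, and then to solve the resulting linear system in the $s_i$'s in order to close the inequality; the solvability of that system turns out to be equivalent to $\sum 1/q_i>1$, which is precisely why this scaling condition appears in the hypothesis and why the resulting exponent is $p=d/(\sum 1/q_i-1)$.
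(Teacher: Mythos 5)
Your proposal is correct, but note that the paper does not prove this lemma at all: it is quoted from Troisi's 1969 paper \cite{Tro} and used as a black box, so you are supplying a proof where the authors supply a citation. What you write is essentially the classical proof of the anisotropic Gagliardo--Nirenberg--Sobolev inequality, and it does close. For the record, the ``exponent bookkeeping'' you flag at the end works out exactly as you predict: taking direction-dependent powers $s_i$ in the slicing step, one gets
$\int_{\R^d}|a|^{\sum_i s_i/(d-1)}\,dx\leq\prod_i\bigl\|\pa_i\bigl(|a|^{s_i}\bigr)\bigr\|_{L^1}^{1/(d-1)}$
by Loomis--Whitney, and imposing $(s_i-1)q_i'=p$ together with $\sum_i s_i=(d-1)p$ forces $s_i=1+p/q_i'$ and $p=d/(\sum_i q_i^{-1}-1)$; the surviving factor $\|a\|_{L^p}^{p-d/(d-1)}$ absorbs into the left-hand side (legitimately, since $a\in C_0^\infty$ makes that norm finite), yielding $\|a\|_{L^p}\leq C\prod_i\|\pa_i a\|_{L^{q_i}}^{1/d}$. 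For the only instance actually used in the paper, namely \eqref{ineqtro} with $d=3$ and $q_1=q_2=q_3=2$, your single-exponent version with $s=4$ already suffices, since $(s-1)q_i'=6=p$ holds for all $i$ when the $q_i$ coincide; the final step $\|\pa_1a\|_{L^2}^{1/3}\|\pa_2a\|_{L^2}^{1/3}\leq\|\nablah a\|_{L^2}^{2/3}$ is immediate. So your argument is a complete, self-contained substitute for the citation, at the cost of carrying out the linear-system step explicitly in the general case.
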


Applying Lemma \ref{lemTro} leads to  the following  interpolation inequality:

\begin{lem}
\label{lemcontrolomss}
{\sl For  $i=1,2,3$, we have
\begin{equation}\label{2.3}
 \|\pa_i \omega\|_{L^{\frac32}} \leq C \bigl\|\pa_i \omega_{\frac34}\bigr\|_{L^2}
 \bigl\|\omega_{\frac34}\bigr\|_{L^2}^{\frac 13},
\end{equation}
and
\begin{equation}\label{2.4}
\begin{split}
&\|\omega\|_{L^{\frac95}} \leq C \bigl\|\omega_{\frac34}\bigr\|_{L^2}\bigl\|\nablah\omega_{\frac34}\bigr\|_{L^2}^{\frac29}
\bigl\|\pa_3 \omega_{\frac34}\bigr\|_{L^2}^{\frac19};\\
&\|\pa_i \omega\|_{L^{\frac95}} \leq C \bigl\|\pa_i \omega_{\frac34}\bigr\|_{L^2}\bigl\|\nablah\omega_{\frac34}\bigr\|_{L^2}^{\frac29}
\bigl\|\pa_3 \omega_{\frac34}\bigr\|_{L^2}^{\frac19}.
\end{split}
 \end{equation}
}\end{lem}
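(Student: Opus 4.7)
The key observation is that $\omega_{\frac34}=\omega|\omega|^{-\frac14}$ satisfies $|\omega_{\frac34}|=|\omega|^{\frac34}$, and hence (away from the zero set of $\omega$)
$$
|\pa_i\omega_{\frac34}|=\tfrac34|\omega|^{-\frac14}|\pa_i\omega|,\quad\text{so that}\quad
|\pa_i\omega|=\tfrac43|\omega|^{\frac14}|\pa_i\omega_{\frac34}|=\tfrac43|\omega_{\frac34}|^{\frac13}|\pa_i\omega_{\frac34}|.
$$
The plan is to insert this pointwise identity into each $L^p$ norm on the left, split the product by H\"older, and then use the elementary identity $\|\omega_{\frac34}\|_{L^q}=\|\omega\|_{L^{3q/4}}^{3/4}$ to convert back when needed. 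The anisotropic Troisi inequality \eqref{ineqtro} will supply the only genuinely nontrivial ingredient, namely the gain coming from the vertical/horizontal split.

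For \eqref{2.3}, I would raise to the power $\frac32$ and write
$$
\|\pa_i\omega\|_{L^{3/2}}^{3/2}\lesssim\int|\omega|^{3/8}|\pa_i\omega_{\frac34}|^{3/2}\,dx,
$$
then apply H\"older with conjugate exponents $(4,\frac43)$. The first factor is $\||\omega|^{3/8}\|_{L^4}=\|\omega\|_{L^{3/2}}^{3/8}=\|\omega_{\frac34}\|_{L^2}^{1/2}$, while the second is $\|\pa_i\omega_{\frac34}\|_{L^2}^{3/2}$, which yields \eqref{2.3} after taking the $\frac23$-power.

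For the first line of \eqref{2.4}, I rewrite $\|\omega\|_{L^{9/5}}=\|\omega_{\frac34}\|_{L^{12/5}}^{4/3}$, interpolate $L^{12/5}$ between $L^2$ and $L^6$ (the relevant exponents being $\frac34$ and $\frac14$), and then apply Troisi's inequality \eqref{ineqtro} to $\|\omega_{\frac34}\|_{L^6}$. This chain gives
$$
\|\omega\|_{L^{9/5}}\lesssim\|\omega_{\frac34}\|_{L^2}\|\omega_{\frac34}\|_{L^6}^{1/3}\lesssim\|\omega_{\frac34}\|_{L^2}\|\nablah\omega_{\frac34}\|_{L^2}^{2/9}\|\pa_3\omega_{\frac34}\|_{L^2}^{1/9}.
$$
For the second line of \eqref{2.4}, I would argue as in the proof of \eqref{2.3}: insert the identity $|\pa_i\omega|\lesssim|\omega_{\frac34}|^{1/3}|\pa_i\omega_{\frac34}|$, obtaining
$$
\|\pa_i\omega\|_{L^{9/5}}^{9/5}\lesssim\int|\omega_{\frac34}|^{3/5}|\pa_i\omega_{\frac34}|^{9/5}\,dx,
$$
apply H\"older with exponents $(10,\tfrac{10}{9})$ so that the second factor becomes $\|\pa_i\omega_{\frac34}\|_{L^2}^{9/5}$ and the first is $\|\omega_{\frac34}\|_{L^6}^{3/5}$, and finish once more with \eqref{ineqtro}.

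There is no real obstacle; the only subtlety is checking that differentiating the $\omega\mapsto\omega|\omega|^{-\frac14}$ nonlinearity is legitimate, which is standard (truncate $(\omega^2+\varepsilon)^{\frac38}/\omega$ and pass to the limit, or appeal to the chain rule for $W^{1,p}$ with H\"older continuous composition). Once the pointwise identity is in hand, the remaining steps are bookkeeping of H\"older exponents, with Troisi's inequality doing all the genuine work.
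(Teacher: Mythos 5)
Your proof is correct and follows essentially the same route as the paper's: the pointwise identity $|\pa_i\omega|=\frac43|\omega|^{\frac14}|\pa_i\omega_{\frac34}|$, a H\"older split isolating $\|\pa_i\omega_{\frac34}\|_{L^2}$, and Troisi's inequality \eqref{ineqtro} applied to $\|\omega_{\frac34}\|_{L^6}$; raising the norms to a power before applying H\"older is only a cosmetic difference. All your exponent bookkeeping checks out, and your remark on justifying the differentiation of $\omega|\omega|^{-\frac14}$ is a sensible addition the paper leaves implicit.
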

\begin{proof}
Note that $|\pa_i\omega|=\frac43\bigl|\pa_i\omss\bigr|\cdot|\omega|^{\frac14}$,
we get, by applying Holder's inequality, that
$$\|\pa_i \omega\|_{L^{\frac32}}\lesssim\bigl\|\pa_i \omega_{\frac34}\bigr\|_{L^2}\bigl\||\omega|^{\frac14}\bigr\|_{L^6}
\lesssim\bigl\|\pa_i \omega_{\frac34}\bigr\|_{L^2}
\bigl\|\omss\bigr\|_{L^2}^{\frac 13},$$
which is the desired estimate \eqref{2.3}.

 Along the same line, we have
\beno
\|\pa_i \omega\|_{L^{\frac95}}\lesssim\bigl\|\pa_i \omega_{\frac34}\bigr\|_{L^2}\bigl\||\omega|^{\frac14}\bigr\|_{L^{18}}
\lesssim\bigl\|\pa_i \omega_{\frac34}\bigr\|_{L^2}
\bigl\|\omega_{\frac34}\bigr\|_{L^6}^{\frac13}, \eeno
 and
 \beno
\|\omega\|_{L^{\frac95}}
\lesssim\bigl\|\omega_{\frac34}\bigr\|_{L^2}
\bigl\|\omega_{\frac34}\bigr\|_{L^6}^{\frac13}.
\eeno
Inserting  \eqref{ineqtro} into the above inequalities  leads to \eqref{2.4}.
\end{proof}

As an application of the above basic facts, we shall present the estimate of $\omega$ in the
anisotropic Sobolev spaces.

\begin{prop}\label{lemomega}
{\sl Let $\omega\in L^{\f32}$ with $\nabla\omss\in L^2$. Let  $\cD$ be the convex hull of the following points:
$\bigl(-\f19,\f{17}{18}\bigr),~\bigl(\f89,-\f1{18}\bigr),~\bigl(-\f13,\f56\bigr),
~\bigl(\f23,-\f16\bigr),$ and $\bigl(-\f13,-\f16\bigr)$, which can also  be characterized by
$$\cD\eqdefa\Bigl\{(s_1,s_2)\in\R^2\,:\,s_1\geq-\f13,~s_2\geq-\f16,~s_1+s_2\leq\f56,~
-2\leq s_1-2s_2\leq 1\Bigr\}.$$
Then for any $(s_1,s_2)\in \cD$, we have
\begin{equation}\label{2.7}
\|\omega\|_{H^{s_1,s_2}}\leq \bigl\|\omss\bigr\|_{L^2}^{\f56-s_1-s_2}
\bigl\|\nablah \omss\bigr\|_{L^2}^{\f13+s_1}
\bigl\|\pa_3 \omss\bigr\|_{L^2}^{\f16+s_2}.
\end{equation}
If $(s_1,s_2)$ is an inner point of $\cD$, one has
\begin{equation}\label{2.8}
\|\omega\|_{B^{s_1,s_2}_{2,1}}\leq \bigl\|\omss\bigr\|_{L^2}^{\f56-s_1-s_2}
\bigl\|\nablah \omss\bigr\|_{L^2}^{\f13+s_1}
\bigl\|\pa_3 \omss\bigr\|_{L^2}^{\f16+s_2},
\end{equation}
}\end{prop}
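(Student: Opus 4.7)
The plan is to establish \eqref{2.7} first at the five vertices of the pentagon $\cD$ and then to propagate it to the whole of $\cD$ by interpolation, using the observation that the three target exponents $\f56-s_1-s_2$, $\f13+s_1$, $\f16+s_2$ depend affinely on $(s_1,s_2)$ and sum to the scaling invariant $\f43$ (consistent with $\|\omega\|_{L^{3/2}}=\|\omss\|_{L^2}^{4/3}$). For the Besov improvement \eqref{2.8} at an interior point of $\cD$, I would choose $\varepsilon>0$ small enough that the four perturbations $(s_1\pm\varepsilon,\,s_2\pm\varepsilon)$ still belong to $\cD$, and then use \eqref{2.7} at those neighbouring points to bound each dyadic block $\|\dhk\dvl\omega\|_{L^2}$ by a factor that is geometrically damped in both $k$ and $\ell$; this converts the $\ell^2$ summation defining $\|\cdot\|_{H^{s_1,s_2}}$ into the $\ell^1$ summation defining $\|\cdot\|_{B^{s_1,s_2}_{2,1}}$.

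The five vertex estimates share a common mechanism: combine Lemma~\ref{lemcontrolomss} (which provides the $L^{3/2}$ and $L^{9/5}$ bounds for $\omega$ and $\nabla\omega$) with anisotropic Hardy--Littlewood--Sobolev inequalities, glued by Minkowski's integral inequality. At the vertex $\bigl(-\f13,-\f16\bigr)$ I would start from $\|\omega\|_{L^{3/2}}=\|\omss\|_{L^2}^{4/3}$, observe that the horizontal Riesz potential $(-\Laph)^{-1/6}$ maps $L^{3/2}_\h\to L^2_\h$ and the vertical one $(-\pa_3^2)^{-1/12}$ maps $L^{3/2}_\v\to L^2_\v$, then compose them and swap the intermediate mixed norms $L^2_\h L^{3/2}_\v$ and $L^{3/2}_\v L^2_\h$ via Minkowski to conclude $\|\omega\|_{H^{-1/3,-1/6}}\lesssim\|\omega\|_{L^{3/2}}=\|\omss\|_{L^2}^{4/3}$. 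The vertices $\bigl(\f23,-\f16\bigr)$ and $\bigl(-\f13,\f56\bigr)$ follow by applying the same argument to $\nablah\omega$ and $\pa_3\omega$ respectively, controlled in $L^{3/2}$ by \eqref{2.3}. The remaining vertices $\bigl(\f89,-\f1{18}\bigr)$ and $\bigl(-\f19,\f{17}{18}\bigr)$ are handled by the same recipe with $L^{9/5}$ replacing $L^{3/2}$ (so the Riesz-potential orders become $1/9$ horizontally and $1/18$ vertically), invoking \eqref{2.4} in place of \eqref{2.3}. Once the vertex bounds are in place, iterated binary complex interpolation in the $H^{s_1,s_2}$ scale together with H\"older on the trilinear right-hand side (legitimate since the three exponents are affine in $(s_1,s_2)$) propagates \eqref{2.7} throughout $\cD$.

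The main obstacle will be the exponent bookkeeping at the five corners: one has to verify that the Riesz-potential orders, the choice of Lebesgue exponent $3/2$ or $9/5$, and the powers of $\|\omss\|_{L^2}$, $\|\nablah\omss\|_{L^2}$, $\|\pa_3\omss\|_{L^2}$ produced by Lemma~\ref{lemcontrolomss} align exactly with the triple $(\f56-s_1-s_2,\f13+s_1,\f16+s_2)$ at each vertex, so that the subsequent affine interpolation reconstructs the correct right-hand side everywhere. The interpolation and Besov-upgrade steps themselves are standard once the five vertex estimates have been secured.
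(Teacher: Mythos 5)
Your treatment of \eqref{2.7} is sound and close in spirit to the paper's: the paper likewise reduces everything to the five extreme points of $\cD$ and then interpolates, the only real difference being how the anisotropic Sobolev embeddings $L^{3/2}\hookrightarrow H^{-\f13,-\f16}$ and $L^{9/5}\hookrightarrow H^{-\f19,-\f1{18}}$ are obtained. You get them from anisotropic Riesz potentials glued by Minkowski; the paper gets them from $L^p\hookrightarrow B^0_{p,2}$ (Theorem 2.40 of \cite{BCD}), Minkowski, and the anisotropic Bernstein inequalities of Lemma \ref{lemBern}. Both routes are legitimate, and your exponent bookkeeping at the five vertices does reproduce the affine triple $\bigl(\f56-s_1-s_2,\f13+s_1,\f16+s_2\bigr)$, so the multi-point H\"older interpolation on the Fourier side then covers all of $\cD$.

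The one step that does not work as written is the upgrade to \eqref{2.8}. Splitting the double sum at $k=0$, $\ell=0$ and bounding each block through the $H^{s_1+\sigma\varepsilon,s_2+\tau\varepsilon}$ norm with $\sigma=\mathrm{sgn}(k)$, $\tau=\mathrm{sgn}(\ell)$ yields $\|\omega\|_{B^{s_1,s_2}_{2,1}}\leq C_\varepsilon\sum_{\sigma,\tau\in\{\pm1\}}\|\omega\|_{H^{s_1+\sigma\varepsilon,s_2+\tau\varepsilon}}$, and after inserting \eqref{2.7} the four summands carry the exponents $\f56-s_1-s_2-(\sigma+\tau)\varepsilon$, $\f13+s_1+\sigma\varepsilon$, $\f16+s_2+\tau\varepsilon$; the term with $\sigma=-\tau$, for instance, equals the target product times $\bigl(\|\nablah\omss\|_{L^2}/\|\pa_3\omss\|_{L^2}\bigr)^{\sigma\varepsilon}$, which is not uniformly bounded. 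So the naive quadrant splitting proves only a non-homogeneous version of \eqref{2.8}. The paper repairs exactly this point by splitting at three free indices $N,M_1,M_2$ and optimizing them, which replaces the sum by the geometric mean $\prod_{\sigma,\tau}\|\omega\|_{H^{s_1+\sigma\delta,s_2+\tau\delta}}^{1/4}$, whose exponents average back to the target triple. Alternatively, your version can be rescued by first normalizing $\|\omss\|_{L^2}=\|\nablah\omss\|_{L^2}=\|\pa_3\omss\|_{L^2}=1$ via the anisotropic rescaling $\omega\mapsto c\,\omega(\lam x_{\rm h},\mu x_3)$, under which both sides of \eqref{2.8} transform identically. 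One of these two fixes is genuinely needed to close the argument.
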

\begin{proof}
Thanks to Theorem 2.40 of \cite{BCD}, which claims that  $L^p\hookrightarrow B^0_{p,2}$
for any $p\in[1,2],$  we get, by applying Minkowski's inequality, that
\begin{align*}
\|a\|_{B^{0,0}_{p,2}}&=\Bigl(\sum_{k,\ell\in\Z}\|\dhk\dvl a\|_{L^{p}}^2
\Bigr)^{\f12}\\
&\lesssim\Bigl(\sum_{k\in\Z}\bigl\|\bigl(\sum_{\ell\in\Z}
\|\dhk\dvl a\|_{L_\v^{p}}^2\bigr)^{\f12}\bigr\|_{L_\h^{p}}^2
\Bigr)^{\f12}\\
&\lesssim\Bigl(\sum_{k\in\Z}\bigl\|
\|\dhk a\|_{L_\v^{p}}\bigr\|_{L_\h^{p}}^2
\Bigr)^{\f12}\\
&\lesssim\Bigl\|\bigl(\sum_{k\in\Z}
\|\dhk a\|_{L_\h^{p}}^2
\bigr)^{\f12}\Bigr\|_{L_\v^{p}}\lesssim\|a\|_{L^{p}},
\end{align*}
from which and Lemma \ref{lemBern},  we deduce that for any $s\in[0,1]$
\begin{equation*}
\begin{split}
&\|\omega\|_{H^{-\f13+s,-\f1{6}}}\lesssim\|\omega\|_{B^{s,0}_{\f32,2}}
\lesssim\|\nablah\omega\|_{B^{0,0}_{\f32,2}}^s
\|\omega\|_{B^{0,0}_{\f32,2}}^{1-s}
\lesssim\|\nablah\omega\|_{L^{\f32}}^s
\|\omega\|_{L^{\f32}}^{1-s};\\
&\|\omega\|_{H^{-\f19+s,-\f1{18}}}\lesssim\|\omega\|_{B^{s,0}_{\f95,2}}
\lesssim\|\nablah\omega\|_{B^{0,0}_{\f95,2}}^s
\|\omega\|_{B^{0,0}_{\f95,2}}^{1-s}
\lesssim\|\nablah\omega\|_{L^{\f95}}^s
\|\omega\|_{L^{\f95}}^{1-s}.
\end{split}
\end{equation*}
Inserting the estimates \eqref{2.3} and \eqref{2.4} into the above
inequalities gives rise to
\begin{equation}\label{2.9}
\begin{split}
&\|\omega\|_{H^{-\f13+s,-\f1{6}}}
\lesssim\bigl\|\omss\bigr\|_{L^2}^{\f43-s}
\bigl\|\nablah \omss\bigr\|_{L^2}^{s};\\
&\|\omega\|_{H^{-\f19+s,-\f1{18}}}
\lesssim\bigl\|\omss\bigr\|_{L^2}^{1-s}
\bigl\|\nablah \omss\bigr\|_{L^2}^{\f29+s}
\bigl\|\pa_3 \omss\bigr\|_{L^2}^{\f19}.
\end{split}
\end{equation}
Exactly along the same line, we  deduce that for any $s\in[0,1]$
\begin{equation}\label{2.10}
\begin{split}
&\|\omega\|_{H^{-\f13,-\f1{6}+s}}
\lesssim\bigl\|\omss\bigr\|_{L^2}^{\f43-s}
\bigl\|\pa_3 \omss\bigr\|_{L^2}^{s};\\
&\|\omega\|_{H^{-\f19,-\f1{18}+s}}
\lesssim\bigl\|\omss\bigr\|_{L^2}^{1-s}
\bigl\|\nablah \omss\bigr\|_{L^2}^{\f29}
\bigl\|\pa_3 \omss\bigr\|_{L^2}^{\f19+s}.
\end{split}
\end{equation}
By interpolating the inequalities \eqref{2.9} and \eqref{2.10}, we obtain that
 for any $r_1,~r_2\in[0,1]$ with $r_1+r_2\leq 1$,
 \begin{equation}\begin{split}\label{2.12}
\|\omega\|_{H^{-\f13+r_1,-\f1{6}+r_2}}
&\leq\|\omega\|_{H^{-\f13+r_1+r_2,-\f1{6}}}^{\f{r_1}{r_1+r_2}}
\|\omega\|_{H^{-\f13,-\f1{6}+r_1+r_2}}^{\f{r_2}{r_1+r_2}}\\
&\lesssim\bigl\|\omss\bigr\|_{L^2}^{\f43-r_1-r_2}
\bigl\|\nablah \omss\bigr\|_{L^2}^{r_1}
\bigl\|\pa_3 \omss\bigr\|_{L^2}^{r_2};
\end{split}\end{equation}
and
 \begin{equation}\begin{split}\label{2.11}
\|\omega\|_{H^{-\f19+r_1,-\f1{18}+r_2}}
&\leq\|\omega\|_{H^{-\f19+r_1+r_2,-\f1{18}}}^{\f{r_1}{r_1+r_2}}
\|\omega\|_{H^{-\f19,-\f1{18}+r_1+r_2}}^{\f{r_2}{r_1+r_2}}\\
&\lesssim\bigl\|\omss\bigr\|_{L^2}^{1-r_1-r_2}
\bigl\|\nablah \omss\bigr\|_{L^2}^{\f29+r_1}
\bigl\|\pa_3 \omss\bigr\|_{L^2}^{\f19+r_2}.
\end{split}\end{equation}
The estimates \eqref{2.12} and \eqref{2.11} show that \eqref{2.7}
holds for $(s_1,s_2)\in \Bigl\{\ \bigl(-\f19,\f{17}{18}\bigr),~\bigl(\f89,-\f1{18}\bigr)$,\\
$\bigl(-\f13,\f56\bigr)$,~$\bigl(\f23,-\f16\bigr),~\bigl(-\f13,-\f16\bigr)\ \Bigr\}.$
Then  \eqref{2.7} for any $(s_1,s_2)\in \cD$ follows from a classical interpolation argument.

In order to prove \eqref{2.8}, let us notice that for any integers $N, M_1, M_2$ we have
\beno
\begin{split}
\|\omega\|_{B^{s_1,s_2}_{2,1}}=&\Bigl(\sum_{\substack{k\leq N\\ \ell\leq M_1}}+\sum_{\substack{k\leq N\\ \ell> M_1}}+\sum_{\substack{k> N\\ \ell\leq M_2}}
+\sum_{\substack{k> N\\ \ell> M_2}}\Bigr)2^{ks_1}2^{\ell s_2}\|\D_k^\h\D_\ell^\v \om\|_{L^2}\\
\lesssim & 2^{N\d} 2^{M_1\d}\|\omega\|_{H^{s_1-\delta,s_2-\delta}}+ 2^{N\d}2^{-M_1\d}\|\omega\|_{H^{s_1-\delta,s_2+\delta}}\\
&+2^{-N\d}2^{M_2\d}\|\omega\|_{H^{s_1+\delta,s_2-\delta}}+ 2^{-N\d}2^{-M_2\d}\|\omega\|_{H^{s_1+\delta,s_2+\delta}}.
\end{split}
\eeno
Taking $M_1, M_2$ in the above inequality so that
\beno
2^{2M_1\d}\sim\f{\|\omega\|_{H^{s_1-\delta,s_2+\delta}}}{\|\omega\|_{H^{s_1-\delta,s_2-\delta}}} \andf
2^{2M_2\d}\sim \f{\|\omega\|_{H^{s_1+\delta,s_2+\delta}}}{\|\omega\|_{H^{s_1+\delta,s_2-\delta}}},
\eeno gives rise to
\beno
\begin{split}
\|\omega\|_{B^{s_1,s_2}_{2,1}}\lesssim 2^{N\d}\|\omega\|_{H^{s_1+\delta,s_2+\delta}}^{\f12}
\|\omega\|_{H^{s_1+\delta,s_2-\delta}}^{\f12}+2^{-N\d}\|\omega\|_{H^{s_1-\delta,s_2+\delta}}^{\f12}
\|\omega\|_{H^{s_1-\delta,s_2-\delta}}^{\f12}.
\end{split}
\eeno
Taking $N$ in the above inequality so that
\beno
2^{2N\d}\sim \frac{\|\omega\|_{H^{s_1-\delta,s_2+\delta}}^{\f12}
\|\omega\|_{H^{s_1-\delta,s_2-\delta}}^{\f12}}{\|\omega\|_{H^{s_1+\delta,s_2+\delta}}^{\f12}
\|\omega\|_{H^{s_1+\delta,s_2-\delta}}^{\f12}}
\eeno
leads to
\beq \label{bs1s2} \|\omega\|_{B^{s_1,s_2}_{2,1}}\lesssim\|\omega\|_{H^{s_1+\delta,s_2+\delta}}^{\f14}
\|\omega\|_{H^{s_1+\delta,s_2-\delta}}^{\f14}\|\omega\|_{H^{s_1-\delta,s_2+\delta}}^{\f14}
\|\omega\|_{H^{s_1-\delta,s_2-\delta}}^{\f14}.\eeq

On the other hand,  when $(s_1,s_2)$ is an inner point of $\cD$,  we can find some
$\delta>0$ so that the ball with center $(s_1,s_2)$
and radius $2\delta$ is still contained in $\cD$.
 Then \eqref{2.8} follows by inserting \eqref{2.7} into \eqref{bs1s2}. This completes the proof of this lemma.
\end{proof}

\setcounter{equation}{0}
\section{The proofs of Theorem \ref{thmmain}}
This section is devoted to the proof of Theorem \ref{thmmain}.
And the strategy is to verify that, the necessary condition
for finite time blow-up criteria, \eqref{blowupCZ5},  can never be satisfied for the local Fujita-Kato solutions of $(NS_\nu)$ under
the assumption of \eqref{thmmaincondi}.

\subsection{{\it A priori} estimates}
Let us first derive the estimates for $\bigl\|\omss(t)\bigr\|_{L^2}^2$
and $\|\nabla v^3(t)\|_{\hm}^2$,
which will be essential in our proof.
To do it, we denote
\begin{equation}\begin{split}\label{defMN}
M(t)\eqdefa\bigl\|\omss(t)&\bigr\|_{L^2}^2+\|\nabla v^3(t)\|_{\hm}^2,
\quad N(t)\eqdefa\bigl\|\pa_3\omss(t)\bigr\|_{L^2}^2
+\|\pa_3\nabla v^3(t)\|_{\hm}^2,\\
&\mbox{and}\quad\wt N(t)\eqdefa\bigl\|\nablah\omss(t)\bigr\|_{L^2}^2
+\|\nablah\nabla v^3(t)\|_{\hm}^2.
\end{split}\end{equation}
We emphasize the fact that
\beno \|\nabla v^3(t)\|_{\hm}^2=\|v^3(t)\|_{H^{\f12,0}}^2+\|\p_3v^3(t)\|_{\hm}^2. \eeno

\begin{prop}\label{propestimateomss}
{\sl Let $v=(v^\h,v^3)$ be a smooth enough solution of $(NS_\nu)$ on $[0,T^\ast[$.
 Then for any $t<T^\ast$, there holds
\begin{equation}\label{estimateomss}
\frac{d}{dt}\bigl\|\omss(t)\bigr\|_{L^2}^2+
 \frac43\bigl\|\nabla_\nu\omss(t)\bigr\|_{L^2}^2
\leq\frac29\wt N+C\bigl(M^{\f23}+M^{\f12}\bigr)N.
\end{equation}
}\end{prop}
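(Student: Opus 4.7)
The plan is to test the vorticity equation in \eqref{omegav3} against $\omega|\omega|^{-1/2}$. Since $\dive v=0$ the convection term vanishes via $v\cdot\nabla\omega\cdot\omega|\omega|^{-1/2}=\frac{2}{3}v\cdot\nabla|\omega|^{3/2}$, and a direct chain-rule computation gives
\beno
\int\pa_t\omega\cdot\omega|\omega|^{-1/2}\,dx=\frac{2}{3}\frac{d}{dt}\bigl\|\omss\bigr\|_{L^2}^2\andf
-\int\Delta_\nu\omega\cdot\omega|\omega|^{-1/2}\,dx=\frac{8}{9}\bigl\|\nabla_\nu\omss\bigr\|_{L^2}^2,
\eeno
so multiplying by $3/2$ produces the LHS of \eqref{estimateomss} and reduces the task to bounding $\frac{3}{2}\int F\cdot\omega|\omega|^{-1/2}\,dx$ with $F=\pa_3 v^3\omega+\pa_2 v^3\pa_3 v^1-\pa_1 v^3\pa_3 v^2$.

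I would split the source integral into the stretching piece $\int\pa_3 v^3\,|\omss|^2\,dx$ and the mixed piece $\int(\pa_2 v^3\pa_3 v^1-\pa_1 v^3\pa_3 v^2)\,\omega|\omega|^{-1/2}\,dx$. For the stretching piece the natural choice is the anisotropic duality $H^{1/2,0}\leftrightarrow H^{-1/2,0}$: the first factor obeys $\|\pa_3 v^3\|_{H^{1/2,0}}\leq N^{1/2}$ straight from the definition of $N$, while Lemma~\ref{lemproductlaw} applied with the symmetric choice $(s_i,r_i)=(\tfrac14,\tfrac14)$ gives $\||\omss|^2\|_{H^{-1/2,0}}\lesssim\|\omss\|_{H^{1/4,1/4}}^2$; interpolating the latter between $L^2$, $H^{1,0}$ and $H^{0,1}$ leads to the bound $M^{1/2}\wt N^{1/4}N^{3/4}$, and Young's inequality with exponents $4$ and $4/3$ turns this into $\varepsilon\wt N+C M^{2/3}N$. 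For the mixed piece I would insert the Biot--Savart decomposition \eqref{helmdecom} so that $\pa_3\vh=\pa_3\vcurl+\pa_3\vdiv$ is written in terms of $\pa_3\omega$ and $\pa_3^2 v^3$, and then apply Lemma~\ref{lemproductlaw} together with Proposition~\ref{lemomega}, which rewrites every $\|\omega\|_{H^{s_1,s_2}}$ with $(s_1,s_2)\in\cD$ as a product of $\bigl\|\omss\bigr\|_{L^2}$, $\bigl\|\nabla_\h\omss\bigr\|_{L^2}$ and $\bigl\|\pa_3\omss\bigr\|_{L^2}$; the curl part is of the same type as the stretching piece and feeds into the $M^{2/3}N$ family, while the divergence part, where an extra $\pa_3^2 v^3$ is paired directly in $\hm$ against the $\|\nabla v^3\|_{\hm}$ piece of $M$, feeds into the $M^{1/2}N$ family. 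All Young coefficients are tuned so that the horizontal-derivative costs sum to exactly $\frac{2}{9}\wt N$.

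The main obstacle I foresee is the combinatorial bookkeeping: for each of the several subterms one must simultaneously pick indices admissible in Lemma~\ref{lemproductlaw} (with $s_i<1$, $r_i<1/2$ and positive sums), keep the auxiliary point $(s_1,s_2)$ in the interior of $\cD$ so that Proposition~\ref{lemomega} applies (perhaps via its Besov strengthening \eqref{2.8}), and distribute the Young exponents so that no subterm produces a power of $M$ stronger than $M^{2/3}$. In particular, the stretching piece is the delicate step: the naive pairing $\pa_3 v^3\in\hm$ against $|\omss|^2\in\hh$ produces the worse $M^2 N$ bound, and only the asymmetric pairing described above yields the sharp $M^{2/3}N$.
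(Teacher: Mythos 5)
Your energy identity and your treatment of the stretching term are both correct. For the stretching piece you in fact take a different (and arguably cleaner) route than the paper: the paper first integrates by parts to move $\pa_3$ onto $\omega$ and then runs an anisotropic H\"older argument in $L^{9/2}_{\rm h}(L^6_{\rm v})\times L^{3/2}\times L^6_{\rm h}(L^4_{\rm v})$, whereas you pair $\pa_3 v^3\in H^{\frac12,0}$ directly against $|\omss|^2=\omss\cdot\omss\in H^{-\frac12,0}$ via \eqref{lemproduct1} with $(s_i,r_i)=(\frac14,\frac14)$. Both routes land on the same intermediate bound $M^{\f12}\wt N^{\f14}N^{\f34}$ and hence on $\frac19\wt N+CM^{\f23}N$ after Young; that part of your argument is complete.

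The gap is in the mixed piece. After the Biot--Savart substitution the integral has the form $\int\nablah\D_{\h}^{-1}f\cdot\nablah v^3\cdot\omega_{\f12}\,dx$ with $f=\pa_3\omega$ or $\pa_3^2v^3$, and here the factor $\omega_{\f12}=\omega|\omega|^{-\f12}$ occurs \emph{alone}, not multiplied by a second copy of $\omega$. It therefore cannot be regrouped as a product of $\omss$'s, and neither Lemma~\ref{lemproductlaw} nor Proposition~\ref{lemomega} controls $\omega_{\f12}$ in any $L^2$-based anisotropic Sobolev space: Proposition~\ref{lemomega} bounds $\|\omega\|_{H^{s_1,s_2}}$, not norms of the fractional power $|\omega|^{\f12}{\rm sgn}\,\omega$, whose derivatives involve $|\nabla\omega|\,|\omega|^{-\f12}$ and are not controlled by $\nabla\omss\in L^2$. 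So the step ``apply Lemma~\ref{lemproductlaw} together with Proposition~\ref{lemomega}'' does not go through as stated. What is needed is a Lebesgue-space H\"older argument that places $\omega_{\f12}$ in $L^3_\h(L^\infty_\v)$, using $\|\omega_{\f12}\|_{L^3_\h(L^\infty_\v)}=\|\omss\|_{L^2_\h(L^\infty_\v)}^{\f23}\lesssim\bigl(\|\omss\|_{L^2}^{\f12}\|\pa_3\omss\|_{L^2}^{\f12}\bigr)^{\f23}$ (a one-dimensional Agmon inequality in $x_3$), and pairs the remaining two factors in dual mixed norms such as $L^4_\h(L^2_\v)\times L^{12/5}_\h(L^2_\v)$ or $L^6_\h(L^{3/2}_\v)\times L^2_\h(L^3_\v)$; this is exactly the content of the paper's Lemma~\ref{lemomhalf}, and it is also the source of the extra $\|\pa_3 v^3\|_{\hh}^{\f16}$ (resp. $\|\nablah v^3\|_{\hh}^{\f16}$) factors that make the Young exponents close to $\frac19\wt N+C\bigl(M^{\f23}+M^{\f12}\bigr)N$. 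Without this ingredient the bookkeeping for the mixed piece cannot be completed. (A minor point: $\|\pa_3^2v^3\|_{\hm}^2$ is a piece of $N$, not of $M$, though your conclusion that the divergence part lands in the $M^{\f12}N$ family is correct.)
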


\begin{proof}
By taking $L^{2}$ inner product of the  $\omega$ equation in \eqref{omegav3} with $\omega_{\f12}$, we get
\begin{equation}\label{estimateomega}
\frac{d}{dt}\bigl\|\omss(t) \bigr\|_{L^2}^2+
\frac43 \bigl\|\nabla_\nu\omss(t)\bigr\|_{L^2}^2
=\frac32\int_{\R^3}\pa_3v^3|\omega|^{\frac32}\,dx
+\frac32\int_{\R^3}(\pa_2v^3\pa_3v^1-\pa_1v^3\pa_3v^2)\omega_{\frac12}\,dx.
\end{equation}
We first get, by using integration by parts and then H\"{o}lder's inequality, that
\begin{equation}\begin{split}\label{pa3v3om32}
\bigl|\int_{\R^3}\pa_3v^3|\omega|^{\frac32}\,dx\bigr|
&\leq \f32\int_{\R^3}|v^3||\pa_3\omega||\omega|^{\frac12}\,dx\\
&\leq \f32\|v^3\|_{L^{\frac92}_{\rm{h}}(L^6_{\rm{v}})}\|\pa_3\omega\|_{L^{\frac32}}
\bigl\|\omss\bigr\|_{L^6_{\rm{h}}(L^4_{\rm{v}})}^{\frac23}.
\end{split}\end{equation}
While it is easy to observe  that
\beno
\begin{split}
\|a\|_{H^{\frac23,\frac14}}^2=&\int_{\R^3}1^{\f1{12}}\bigl(|\xi_\h|^2\bigr)^{\f23}\bigl(|\xi_3|^2\bigr)^{\f14}|\hat{a}(\xi)|^2\,d\xi\\
\leq &\Bigl(\int_{\R^3}|\hat{a}(\xi)|^2\,d\xi\Bigr)^{\f1{12}}\Bigl(\int_{\R^3}|\xi_\h|^2|\hat{a}(\xi)|^2\,d\xi\Bigr)^{\f2{3}}
\Bigl(\int_{\R^3}|\xi_3|^2|\hat{a}(\xi)|^2\,d\xi\Bigr)^{\f1{4}}\\
\leq& \|a\|_{L^2}^{\frac1{6}}
\|\nablah a\|_{L^2}^{\frac43}\|\pa_3a\|_{L^2}^{\frac12},
\end{split}
\eeno
which yields
\beno
\|a\|_{H^{\frac23,\frac14}}\leq \|a\|_{L^2}^{\frac1{12}}
\|\nablah a\|_{L^2}^{\frac23}\|\pa_3a\|_{L^2}^{\frac14}.
\eeno
Similarly, we have
\beno
 \|a\|_{H^{\frac59,\frac13}}
\leq \|a\|_{H^{\frac12,0}}^{\frac{11}{18}}
\|\nablah a\|_{H^{\frac12,0}}^{\frac1{18}}
\|\pa_3 a\|_{\hh}^{\frac13}.
\eeno
So that it follows from Sobolev inequality that
\begin{align*}
\bigl\|\omss\bigr\|_{L^6_{\rm{h}}(L^4_{\rm{v}})}\lesssim&\bigl\|\omss\bigr\|_{H^{\frac23,\frac14}}
\lesssim\bigl\|\omss\bigr\|_{L^2}^{\frac1{12}}
\bigl\|\nablah\omss\bigr\|_{L^2}^{\frac23}\bigl\|\pa_3\omss\bigr\|_{L^2}^{\frac14},\\
\|v^3\|_{L^{\frac92}_{\rm{h}}(L^6_{\rm{v}})}\lesssim &\|v^3\|_{H^{\frac59,\frac13}}
\lesssim \|v^3\|_{H^{\frac12,0}}^{\frac{11}{18}}
\|\nablah v^3\|_{H^{\frac12,0}}^{\frac1{18}}
\|\pa_3v^3\|_{\hh}^{\frac13}.
\end{align*}
Substituting the above inequalities and \eqref{2.3} into \eqref{pa3v3om32}, we obtain
\begin{equation}\begin{split}\label{nuomss1}
\bigl|\int_{\R^3}\pa_3v^3|\omega|^{\frac32}\,dx\bigr|
&\lesssim \bigl\|\omss\bigr\|_{L^2}^{\frac{7}{18}}\|v^3\|_{H^{\frac12,0}}^{\frac{11}{18}}
\bigl\|\nablah\omss\bigr\|_{L^2}^{\frac49}\|\nablah v^3\|_{H^{\frac12,0}}^{\frac1{18}}
\bigl\|\pa_3\omss\bigr\|_{L^2}^{\frac76}\|\pa_3v^3\|_{\hh}^{\frac13}\\
&\lesssim M^{\f12}\wt N^{\f14}N^{\f34}.
\end{split}\end{equation}
Applying Young's inequality gives
\begin{equation}\begin{split}\label{estimateomega1}
\bigl|\int_{\R^3}\pa_3v^3|\omega|^{\frac32}\,dx\bigr|
\leq \frac1{9}\wt N
+C M^{\f23}N.
\end{split}\end{equation}

To deal with the second term on the right side of \eqref{estimateomega}, we need the following lemma,
which we admit for the time being.

\begin{lem}\label{lemomhalf}
{\sl For any Schwartz functions $f,~g$ and $\omega$, there holds
\begin{align*}
\Bigl|\int_{\R^3}\nablah\Delta_{\rm{h}}^{-1}f\cdot\nablah g
\bigl| \omega_{\frac12}\,dx\Bigr|\lesssim
\min\Bigl\{\|f\|_{L^{\frac32}}\|\pa_3 g&\|_{\hh}^{\f16},
\|f\|_{\hm}\|\nablah g\|_{\hh}^{\f16}\Bigr\}\\
&\times\|g\|_{\hh}^{\f13}\|\nablah g\|_{\hh}^{\f12}
\bigl\|\omss\bigr\|_{L^2}^{\frac13}\bigl\|\pa_3\omss\bigr\|_{L^2}^{\frac13}.
\end{align*}
}\end{lem}

By virtue of \eqref{helmdecom}, we have
$$\pa_3\vh=\nablah^\perp \D_{\rm h}^{-1} \pa_3\om
-\nablah\D_{\rm h}^{-1}\partial_3^2 v^3.$$
Applying Lemma \ref{lemomhalf} with $f=\pa_3\omega$ or $\partial_3^2 v^3$
and $g=v^3$, we achieve
\begin{equation}\begin{split}\label{nuomss2}
\Bigl|\int_{\R^3}(\pa_2v^3\pa_3v^1-\pa_1v^3\pa_3v^2)\omega_{\frac12}\,dx\Bigr|
\lesssim\Bigl(\|\pa_3\omega&\|_{L^{\frac32}}\|\pa_3 v^3\|_{\hh}^{\f16}+
\|\pa_3^2 v^3\|_{\hm}\|\nablah v^3\|_{\hh}^{\f16}\Bigr)\\
&\times\|v^3\|_{\hh}^{\f13}\|\nablah v^3\|_{\hh}^{\f12}
\bigl\|\omss\bigr\|_{L^2}^{\frac13}\bigl\|\pa_3\omss\bigr\|_{L^2}^{\frac13}.
\end{split}\end{equation}
Substituting \eqref{2.3} into the above estimate and then using
Young's inequality, we infer
\begin{equation*}\begin{split}\label{estimateomega2}
\Bigl|\int_{\R^3}(\pa_2v^3\pa_3v^1-\pa_1v^3\pa_3v^2)\omega_{\frac12}\,dx\Bigr|
&\leq \frac19\|\nablah v^3\|_{H^{\frac12,0}}^{2}
+C\Big(\|v^3\|_{\hh}^{\f49}\bigl\|\omss\bigr\|_{L^2}^{\frac89}
\|\pa_3 v^3\|_{\hh}^{\f29}\bigl\|\pa_3\omss\bigr\|_{L^2}^{\frac{16}9}\\
&\qquad\qquad\qquad\quad+\|v^3\|_{\hh}^{\f12}\bigl\|\omss\bigr\|_{L^2}^{\frac12}
\|\pa_3^2 v^3\|_{\hm}^{\f32}\bigl\|\pa_3\omss\bigr\|_{L^2}^{\frac12}\Bigr)\\
&\leq \frac19\wt N+C\bigl(M^{\f23}+M^{\f12}\bigr)N.
\end{split}\end{equation*}
Inserting the above estimate and \eqref{estimateomega1}
into \eqref{estimateomega} yields \eqref{estimateomss}. This completes the proof of this proposition.
\end{proof}

\begin{proof}[Proof of Lemma \ref{lemomhalf}]
We first get, by using H\"older's inequality and Sobolev inequality, that
\begin{equation}\begin{split}\label{lem3.1-1}
\Bigl|\int_{\R^3}\nablah\Delta_{\rm{h}}^{-1}f\cdot\nablah g
\cdot\omega_{\frac12}\,dx\Bigr|
&\leq\|\nablah\Delta_{\rm{h}}^{-1}f\|_{L^4_\h(L^2_\v)}
\|\nablah g\|_{L^{\f{12}5}_\h(L^2_\v)}
\bigl\|\omega_{\frac12}\bigr\|_{L^{3}_\h(L^\infty_\v)}\\
&\leq\|\nablah\Delta_{\rm{h}}^{-1}f\|_{H^{\f12,0}}
\|\nablah g\|_{H^{\f16,0}}
\bigl\|\omss\bigr\|_{L^{2}_\h(L^\infty_\v)}^{\f23}\\
&\leq\|f\|_{H^{-\f12,0}}\|g\|_{H^{\f12,0}}^{\f13}
\|\nablah g\|_{H^{\f12,0}}^{\f23}
\bigl\|\omss\bigr\|_{L^2}^{\f13}\bigl\|\pa_3\omss\bigr\|_{L^2}^{\f13}.
\end{split}\end{equation}
Alternatively, we can handle the estimate as follows
\begin{equation}\begin{split}\label{lem3.1-2}
\Bigl|\int_{\R^3}\nablah\Delta_{\rm{h}}^{-1}f\cdot\nablah g
\cdot\omega_{\frac12}\,dx\Bigr|
&\leq\|\nablah\Delta_{\rm{h}}^{-1}f\|_{L^6_\h(L^{\f32}_\v)}
\|\nablah g\|_{L^2_\h(L^3_\v)}
\bigl\|\omega_{\frac12}\bigr\|_{L^{3}_\h(L^\infty_\v)}\\
&\leq\|f\|_{L^{\f32}}\|\nablah g\|_{H^{0,\f16}}
\bigl\|\omss\bigr\|_{L^{2}_\h(L^\infty_\v)}^{\f23}\\
&\leq\|f\|_{L^{\f32}}\|g\|_{H^{\f12,0}}^{\f13}
\|\nablah g\|_{H^{\f12,0}}^{\f12}\|\pa_3 g\|_{H^{\f12,0}}^{\f16}
\bigl\|\omss\bigr\|_{L^2}^{\f13}\bigl\|\pa_3\omss\bigr\|_{L^2}^{\f13}.
\end{split}\end{equation}
Combining \eqref{lem3.1-1} with  \eqref{lem3.1-2} leads to  the lemma.
\end{proof}

\begin{prop}\label{propestimatepa3v3Ht}
{\sl Under the assumptions of Proposition \ref{propestimateomss},
for any $t<T^\ast$, there holds
\begin{equation}\label{estimatepa3v3Ht}
\f{d}{dt}\|\pa_3 v^3(t)\|_{\hm}^2+2\|\nabla_\nu \pa_3 v^3(t)\|_{\hm}^2
\leq \f4{9}\wt N+C\bigl(M^{\f54}+M^{\f12}\bigr)N.
\end{equation}
}\end{prop}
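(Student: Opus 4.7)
The plan is to apply $\pa_3$ to the $v^3$-equation of \eqref{omegav3} and then take the $\hm$ inner product with $\pa_3 v^3$. Since $\pa_3$ commutes with $\Delta_\nu$ and with the horizontal Riesz multiplier defining the $\hm$-norm, this yields
\begin{equation*}
\f12\f{d}{dt}\|\pa_3 v^3\|_{\hm}^2 + \|\nabla_\nu \pa_3 v^3\|_{\hm}^2
= -\bigl\langle \pa_3(v\cdot\nabla v^3),\,\pa_3 v^3\bigr\rangle_{\hm}
- \Bigl\langle \pa_3^2\Delta^{-1}\Bigl(\sum_{\ell,m=1}^3\pa_\ell v^m \pa_m v^\ell\Bigr),\pa_3 v^3\Bigr\rangle_{\hm},
\end{equation*}
so the task reduces to bounding each right-hand pairing by $\f29\wt N + \f{C}2(M^{\f54}+M^{\f12})N$, after which \eqref{estimatepa3v3Ht} follows by doubling.

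For the convection term, I would exploit the incompressibility identity $\divh v^\h=-\pa_3 v^3$ to recast the nonlinearity in divergence form, namely $v\cdot\nabla v^3 = \divh(v^\h v^3) + \pa_3((v^3)^2)$. Integrating by parts shifts derivatives off the higher-order factor onto $\pa_3 v^3$ paired against a zeroth-order horizontal Riesz multiplier of $\pa_3 v^3$ or of $\pa_3^2 v^3$. The resulting bilinear pairings are controlled by the anisotropic product laws of Lemma \ref{lemproductlaw}, the Biot-Savart decomposition $v^\h=\vcurl+\vdiv$ from \eqref{helmdecom} separating the $\omega$-dependence from the $v^3$-dependence, and Proposition \ref{lemomega} converting the resulting $\|\omega\|_{H^{s_1,s_2}}$ or $\|\omega\|_{B^{s_1,s_2}_{2,1}}$ norms into products of $\|\omss\|_{L^2}$, $\|\nablah\omss\|_{L^2}$ and $\|\pa_3\omss\|_{L^2}$.

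For the pressure term, incompressibility again allows the rewriting $\sum_{\ell,m=1}^3\pa_\ell v^m\pa_m v^\ell = \pa_\ell\pa_m(v^\ell v^m)$, so that $\pa_3^2\Delta^{-1}\pa_\ell\pa_m$ becomes a zeroth-order Fourier multiplier applied to the tensor $v^\ell v^m$. Expanding $v^\h=\vcurl+\vdiv$ splits this quadratic form into four types. The most delicate piece is the pure $\vcurl\otimes\vcurl$ contribution, which is morally quadratic in $\omega$; since $|\omega|=\f43|\omss|^{\f43}$, controlling it requires an $L^{16/3}$-type interpolation between $\|\omss\|_{L^2}$ and $\|\omss\|_{L^6}\lesssim\|\nabla\omss\|_{L^2}$ via the Troisi inequality of Lemma \ref{lemTro}, and this interpolation is precisely the origin of the anomalous exponent $M^{\f54}$ in the final bound. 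The remaining three pieces, involving one or two factors of $v^3$, contribute the $M^{\f12}N$ piece.

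The main obstacle is calibrating all the interpolation exponents so that, after Young's inequality, the bound lands exactly on $\f49\wt N + C(M^{\f54}+M^{\f12})N$ rather than on higher powers of $M$ which could not be closed under the largeness condition \eqref{thmmaincondi}. Concretely this amounts to choosing the product-estimate indices $(s_1,s_2,r_1,r_2)$ in Lemma \ref{lemproductlaw} and the interpolation point $(s_1,s_2)\in\cD$ in Proposition \ref{lemomega} so that the horizontal and vertical derivatives of $\omss$ and of $\nabla v^3$ produced by each integration-by-parts saturate exactly the $\wt N$-budget on the left and leave the remainder as the claimed polynomial in $M$ times $N$.
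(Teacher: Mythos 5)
Your plan is essentially the paper's proof. The paper likewise takes the $\hm$ inner product of the $v^3$ equation with $-\pa_3^2v^3$, regroups the quadratic terms (into $P_1,P_2,P_3$ of \eqref{S3eq1b} rather than your divergence form --- a cosmetic difference), and closes with Lemma \ref{lemproductlaw}, the splitting \eqref{helmdecom} and Proposition \ref{lemomega}; you also correctly locate the origin of the exponent $\f54$ in the $\vcurl\otimes\vcurl$ part of the pressure.

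Two caveats. First, what you call ``calibrating the exponents'' is the entire substance of the proposition, and you have not done it; for the record, the worst term is handled in the paper by pairing $(\na_\h^2\D_\h^{-1}\om)^2$ in $(B^{-1}_{2,\infty})_\h(H^{\f14})_\v$ against $\pa_3v^3$ in $(B^{0}_{2,1})_\h(H^{-\f14})_\v$, using \eqref{lemproduct3} and \eqref{2.7} at $(s_1,s_2)=(0,\f38)$ to get $\|\om\|_{H^{0,\f38}}^2\lesssim M^{\f{11}{24}}\wt N^{\f13}N^{\f{13}{24}}$, whence the product is $M^{\f56}\wt N^{\f13}N^{\f23}\le\f19\wt N+CM^{\f54}N$ --- not an $L^{16/3}$ Troisi interpolation, although Troisi's inequality does underlie Proposition \ref{lemomega}. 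Second, two of your structural claims are imprecise as stated: $\pa_3^2\D^{-1}\pa_\ell\pa_m$ is a degree-two, not degree-zero, operator on $v^\ell v^m$, so two derivatives must still be distributed by duality exactly as above; and the remaining convection and pressure pieces do not all land on $M^{\f12}N$ --- the paper's $\cA_1$, $\cA_2$ and $Q$-type terms produce intermediate powers such as $M^{\f{16}{23}}N$ and $M^{\f23}N$, which are admissible only because $M^{a}\le M^{\f12}+M^{\f54}$ for $\f12\le a\le\f54$. Neither point breaks the argument, but both must be carried out explicitly for the total coefficient $\f49$ in front of $\wt N$ to come out as claimed.
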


\begin{proof}
We first get,
by taking $\hm$ inner product
of  the $v^3$ equation of \eqref{omegav3}
with $-\p_3^2v^3,$ that
\begin{equation}\begin{split}\label{S3eq1b}
\f{d}{dt}\|\pa_3 v^3(t)\|_{\hm}^2+&2\|\nabla_\nu \pa_3 v^3(t)\|_{\hm}^2
=-\sum_{i=1}^3 \bigl(P_{i}(v,v) \, |\, \pa_3v^3\bigr)_{\hm}
\quad\mbox{with}\quad\\
 P_{1}(v,v) \eqdefa& 2\bigl({\rm Id}+\pa_3^2\Delta^{-1}\bigr)
(\pa_3 v^3)^2+2\pa_3^2\Delta^{-1}\Bigl(
\sum\limits_{\ell,m=1}^2\pa_\ell v^m\pa_m v^\ell\Bigr),\\
 P_{2}(v,v)\eqdefa& 2\bigl({\rm Id}+2\pa_3^2\Delta^{-1}\bigr)
\bigl(\pa_3 v^\h\cdot\na_\h v^3\bigr)
\quad \mbox{and}\quad P_{3}(v,v) \eqdefa 2 v\cdot \nabla\pa_3 v^3.
\end{split}\end{equation}

In what follows, we shall handle term by term the righthand side of \eqref{S3eq1b}.

\noindent$\bullet$\underline{
The estimate of  $\bigl(P_{1}(v,v) \, |\, \p_3v^3\bigr)_{\hm}$.}

Since $\pa_3^2\Delta^{-1}$ is a bounded Fourier multiplier,
we get, by using \eqref{helmdecom}, that
\begin{equation}\begin{split}\label{3.12}
\bigl|\bigl(&P_{1}(v,v) \, |\, \pa_3v^3\bigr)_{\hm}\bigr|\\
&\lesssim\|P_{1}(v,v)\|_{(B^{-1}_{2,\infty})_\h(H^{\f14})_\v}
\|\pa_3 v^3\|_{(B^{0}_{2,1})_\h(H^{-\f14})_\v}\\
&\lesssim\bigl(\|(\na_\h^2\D_\h^{-1}\omega)^2\|_{(B^{-1}_{2,\infty})_\h(H^{\f14})_\v}
+\|(\pa_3 v^3)^2\|_{(B^{-1}_{2,\infty})_\h(H^{\f14})_\v}\bigr)
\|\pa_3 v^3\|_{(B^{0}_{2,1})_\h(H^{-\f14})_\v}.
\end{split}\end{equation}
While it follows from \eqref{lemproduct3} and \eqref{2.7}
 that
\begin{equation*}\begin{split}
\|(\na_\h^2\D_\h^{-1}\omega)^2&\|_{(B^{-1}_{2,\infty})_\h(H^{\f14})_\v}
+\|(\pa_3 v^3)^2\|_{(B^{-1}_{2,\infty})_\h(H^{\f14})_\v}\\
\lesssim &\|\omega\|_{H^{0,\f38}}^2+\|\pa_3v^3\|_{H^{0,\f38}}^2\\
\lesssim &\bigl\|\omss\bigr\|_{L^2}^{\f{11}{12}}
\bigl\|\nablah\omss\bigr\|_{L^2}^{\f23}
\bigl\|\pa_3\omss\bigr\|_{L^2}^{\f{13}{12}}
+\|v^3\|_{\hh}^{\f14}\|\pa_3v^3\|_{\hh}^{\f34}
\|\pa_3^2v^3\|_{\hm}.
\end{split}\end{equation*}
And we deduce from Proposition 2.22 of \cite{BCD} that
\begin{equation*}\begin{split}
\|\pa_3 v^3\|_{(B^{0}_{2,1})_\h(H^{-\f14})_\v}
&\leq\|\pa_3v^3\|_{H^{-\f14,-\f14}}^{\f12}\|\pa_3v^3\|_{H^{\f14,-\f14}}^{\f12}\\
&\leq\bigl(\|v^3\|_{\hh}^{\f14}\|\pa_3v^3\|_{\hm}^{\f34}\bigr)^{\f12}
\bigl(\|v^3\|_{\hh}^{\f14}\|\pa_3v^3\|_{\hm}^{\f14}
\|\pa_3v^3\|_{\hh}^{\f12}\bigr)^{\f12}\\
&=\|v^3\|_{\hh}^{\f14}\|\pa_3v^3\|_{\hm}^{\f12}
\|\pa_3v^3\|_{\hh}^{\f14}.
\end{split}\end{equation*}
Substituting the above inequalities into \eqref{3.12} gives rise to
$$\bigl|\bigl(P_{1}(v,v) \, |\,\pa_3v^3\bigr)_{\hm}\bigr|
\lesssim M^{\f38}N^{\f18}\bigl(M^{\f{11}{24}}\wt N^{\f13}N^{\f{13}{24}}
+M^{\f18}N^{\f78}\bigr).$$
Applying Young's inequality yields
\begin{equation}\label{3.13}
\bigl|\bigl(P_1(v,v) \, |\, \pa_3v^3\bigr)_{\hm}\bigr|
\leq\frac1{9}\wt N+C\bigl(M^{\f54}+M^{\f12}\bigr)N.
\end{equation}

\noindent$\bullet$\underline{
The estimate of  $\bigl(P_{2}(v,v) \, |\, \p_3v^3\bigr)_{\hm}$.}

By using integration by parts, we can write
$\bigl(P_{2}(v,v) \, |\, \pa_3v^3\bigr)_{\hm}=\cA_{1}+\cA_{2}$ with
\beq\begin{split}\label{3.14}
\cA_{1}\eqdefa &-2\bigl(({\rm Id}+2\pa_3^2\Delta^{-1})
(v^\h\cdot\na_\h v^3)
\, |\, \pa_3^2 v^3\bigr)_{\hm},\\
\cA_{2}\eqdefa&-2\bigl(({\rm Id}+2\pa_3^2\Delta^{-1})
( v^\h\cdot\na_\h\pa_3 v^3)
\, |\,\pa_3v^3\bigr)_{\hm}.
\end{split}\eeq
Applying the law of product, Lemma \ref{lemproductlaw}, and Proposition 2.22 of \cite{BCD}  yields
\begin{equation}\begin{split}\label{3.15}
|\cA_{1}|&\lesssim\|v^\h\cdot\na_\h v^3\|_{H^{-\f34,\f14}}
\bigl\|\pa_3^2 v^3\bigr\|_{H^{-\f14,-\f14}}\\
&\lesssim\|v^\h\|_{H^{\f34,\f14}}
\|\na_\h v^3\|_{(H^{-\frac12})_\h(B^{\f12}_{2,1})_\v}
\|\pa_3 v^3\|_{\hh}^{\f14}\|\pa_3^2v^3\|_{\hm}^{\f34}\\
&\lesssim\|v^\h\|_{H^{\f34,\f14}}\|v^3\|_{\hh}^{\f12}
\|\pa_3 v^3\|_{\hh}^{\f34}\|\pa_3^2v^3\|_{\hm}^{\f34}.
\end{split}\end{equation}
While we deduce from \eqref{helmdecom} and Proposition  \ref{lemomega} that
\begin{align*}
\|v^\h\|_{H^{\f34,\f14}}&\lesssim
\|\omega\|_{H^{-\f14,\f14}}+\|\pa_3v^3\|_{H^{-\f14,\f14}}\\
&\lesssim\bigl\|\omss\bigr\|_{L^2}^{\f56}
\bigl\|\nablah \omss\bigr\|_{L^2}^{\f1{12}}
\bigl\|\pa_3 \omss\bigr\|_{L^2}^{\f5{12}}
+\|\pa_3v^3\|_{\hm}^{\f12}
\|\pa_3 v^3\|_{\hh}^{\f14}\|\pa_3^2v^3\|_{\hm}^{\f14}.
\end{align*}
Inserting the above inequalities  into \eqref{3.15} and then using Young's inequality,
we obtain
\begin{equation}\label{3.16}
\begin{split}
|\cA_{1}|
\leq& C\bigl(M^{\f23}\wt N^{\f1{24}}N^{\f{23}{24}}+M^{\f12}N\bigr)\\
\leq&\f1{9}\wt N+C\bigl(M^{\f{16}{23}}+M^{\f12}\bigr)N.
\end{split}
\end{equation}
Along the same line, we have
\begin{equation}\begin{split}\label{3.17}
|\cA_{2}|&\lesssim\|v^\h\cdot\na_\h \pa_3v^3\|
_{(H^{-\f34})_\h(B^{-\f12}_{2,\infty})_\v}\cdot
\bigl\|\pa_3 v^3\bigr\|_{(H^{-\f14})_\h(B^{\f12}_{2,1})_\v}\\
&\lesssim\|v^\h\|_{H^{\f34,0}}
\|\na_\h\pa_3 v^3\|_{H^{-\frac12,0}}\cdot
\|\pa_3 v^3\|_{H^{-\f14,\f14}}^{\f12}\|\pa_3v^3\|_{H^{-\f14,\f34}}^{\f12}\\
&\lesssim\bigl(\bigl\|\omss\bigr\|_{L^2}^{\f{13}{12}}
\bigl\|\nablah \omss\bigr\|_{L^2}^{\f1{12}}
\bigl\|\pa_3 \omss\bigr\|_{L^2}^{\f16}
+\|\pa_3v^3\|_{\hm}^{\f34}\|\pa_3 v^3\|_{\hh}^{\f14}\bigr)
\|\pa_3v^3\|_{\hh}\\
&\qquad\times\bigl(\|v^3\|_{\hh}^{\f14}\|\pa_3v^3\|_{\hm}^{\f14}
\|\pa_3^2v^3\|_{\hm}^{\f12}\bigr)^{\f12}
\bigl(\|\pa_3 v^3\|_{\hh}^{\f14}\|\pa_3^2v^3\|_{\hm}^{\f34}\bigr)^{\f12}.
\end{split}\end{equation}
Then applying Young's inequality yields
\begin{equation}\label{3.18}
\begin{split}
|\cA_{2}|\leq &C\bigl(M^{\f23}\wt N^{\f1{24}}N^{\f{23}{24}}+M^{\f12}N\bigr)\\
\leq&\f1{9}\wt N+C\bigl(M^{\f{16}{23}}+M^{\f12}\bigr)N,
\end{split}
\end{equation}
which together with  \eqref{3.16} ensures that
\begin{equation}\label{3.19}
\bigl|\bigl(P_{2}(v,v) \, |\, \pa_3v^3\bigr)_{\hm}\bigr|
\leq\f2{9}\wt N+C\bigl(M^{\f{16}{23}}+M^{\f12}\bigr)N.
\end{equation}

\noindent$\bullet$\underline{
The estimate of  $\bigl(P_{3}(v,v) \, |\, \p_3v^3\bigr)_{\hm}$.}

It is easy to observe that the term
$\bigl|\bigl(\vh\cdot\nablah\pa_3 v^3 \, |\, \pa_3v^3\bigr)_{\hm}\bigr|$
shares the same estimate as $\cA_2$ given by \eqref{3.17}.
It remains to handle the estimate of $\bigl|\bigl(v^3 \pa_3^2 v^3 \, |\, \pa_3v^3\bigr)_{\hm}\bigr|.$
Indeed by using  the law of product, Lemma \ref{lemproductlaw}, we get
\begin{equation*}\begin{split}
\bigl|\bigl(v^3 \pa_3^2 v^3 \, |\, \pa_3v^3\bigr)_{\hm}\bigr|
&\lesssim\bigl\|v^3\p_3^2v^3\bigr\|_{H^{-\f34,-\f14}}
\cdot\|\partial_3v^3\|_{H^{-\f14,\f14}}\\
&\lesssim\|v^3\|_{(H^{\frac12})_\h(B^{\f12}_{2,1})_\v}\cdot
\|\pa_3^2v^3\|_{H^{-\f14,-\f14}}\cdot\|v^3\|_{\hh}^{\f14}
\|\pa_3v^3\|_{\hm}^{\f14}\|\pa_3^2v^3\|_{\hm}^{\f12}\\
&\lesssim\|v^3\|_{\hh}^{\f12}\|\pa_3v^3\|_{\hh}^{\f12}\cdot
\|v^3\|_{\hh}^{\f14}\|\pa_3v^3\|_{\hm}^{\f14}
\|\pa_3v^3\|_{\hh}^{\f14}\|\pa_3^2v^3\|_{\hm}^{\f54}\\
&\lesssim\|v^3\|_{\hh}^{\f34}\|\pa_3v^3\|_{\hm}^{\f14}
\|\pa_3v^3\|_{\hh}^{\f34}\|\pa_3^2v^3\|_{\hm}^{\f54}\\
&\lesssim M^{\f12}N.
\end{split}\end{equation*}
By combining the above inequality with \eqref{3.18},  we deduce that
\beq\begin{split}\label{3.20}
\bigl|\bigl(P_{3}(v,v) \, |\,\pa_3v^3\bigr)_{\Ht}\bigr|
\leq\f1{9}\wt N+C\bigl(M^{\f{16}{23}}+M^{\f12}\bigr)N.
\end{split}
\eeq

Inserting  \eqref{3.13},~\eqref{3.19} and \eqref{3.20} into \eqref{S3eq1b} leads to \eqref{estimatepa3v3Ht}.
This completes the proof of Proposition \ref{propestimatepa3v3Ht}.
\end{proof}

\begin{prop}\label{propestimatepa3v3H120}
{\sl Under the assumptions of Proposition \ref{propestimateomss},
for any $t<T^\ast$, there holds
\begin{equation}\label{estimatepa3v3H120}
\begin{split}
\f{d}{dt}\|v^3&(t)\|_{H^{\frac12,0}}^2+2\|\nabla_\nu v^3(t)\|_{H^{\frac12,0}}^2
\leq \f29\wt N+C\bigl(M^2+M^{\f12}\bigr)N.
\end{split}\end{equation}
}\end{prop}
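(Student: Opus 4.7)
The plan is to mirror the energy-type argument used in Proposition \ref{propestimatepa3v3Ht}, but this time test the $v^3$-equation of \eqref{omegav3} against $v^3$ itself in the $H^{\frac12,0}$-inner product. Since $(\pa_t v^3\,|\,v^3)_{H^{\frac12,0}}=\frac12\frac{d}{dt}\|v^3\|_{H^{\frac12,0}}^2$ and $(-\Delta_\nu v^3\,|\,v^3)_{H^{\frac12,0}}=\|\nabla_\nu v^3\|_{H^{\frac12,0}}^2$, I obtain
\[
\f{d}{dt}\|v^3\|_{H^{\frac12,0}}^2 + 2\|\nabla_\nu v^3\|_{H^{\frac12,0}}^2
= -2\bigl(v\cdot\nabla v^3\,\big|\,v^3\bigr)_{H^{\frac12,0}}
+ 2\bigl(\pa_3\Delta^{-1}Q(v,v)\,\big|\,v^3\bigr)_{H^{\frac12,0}},
\]
where $Q(v,v)\eqdefa \sum_{\ell,m=1}^3 \pa_\ell v^m \pa_m v^\ell$. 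The goal is then to bound the two right-hand terms by $\frac29\wt N + C(M^2+M^{\frac12})N$.

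For the transport term, I would split $v\cdot\nabla v^3 = v^{\rm h}\cdot\nabla_{\rm h} v^3 + v^3\pa_3 v^3$. Using the Helmholtz decomposition \eqref{helmdecom} to write $v^{\rm h}=\nabla_{\rm h}^\perp\Delta_{\rm h}^{-1}\omega - \nabla_{\rm h}\Delta_{\rm h}^{-1}\pa_3 v^3$, each factor can be expressed through $\omega$ and $\pa_3 v^3$. Then I apply the anisotropic product law \eqref{lemproduct1} (or \eqref{lemproduct3}) from Lemma \ref{lemproductlaw}, pairing with the fact that $\|v^3\|_{H^{\frac12,0}}$ is already present on the left-hand side. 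The term $v^3\pa_3v^3=\frac12\pa_3((v^3)^2)$ lets me move one vertical derivative onto the test function and then use the product laws, as in the analysis of $\mathcal{A}_1,\mathcal{A}_2$ in \eqref{3.14}--\eqref{3.18}.

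For the pressure-type term, decompose $Q(v,v)$ into
\[
\sum_{\ell,m=1}^2 \pa_\ell v^m \pa_m v^\ell \;+\; 2\sum_{\ell=1}^2 \pa_\ell v^3 \pa_3 v^\ell \;+\;(\pa_3 v^3)^2.
\]
The mixed and vertical pieces are controlled exactly as the terms $P_2$ and the $(\pa_3v^3)^2$ contribution to $P_1$ in Proposition \ref{propestimatepa3v3Ht}. The purely horizontal piece $\sum_{\ell,m=1}^2 \pa_\ell v^m\pa_m v^\ell$ is the main obstacle: via \eqref{helmdecom} it is quadratic in $\omega$ and in $\pa_3v^3$, so it generates contributions behaving like $\omega^2$ paired against $v^3\in H^{\frac12,0}$. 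I would estimate it using \eqref{lemproduct3} of Lemma \ref{lemproductlaw} to gain horizontal low-frequency regularity, bounding $\|\omega\|_{H^{s_1,s_2}}$ through the interpolation inequality \eqref{2.7} of Proposition \ref{lemomega} with a suitable $(s_1,s_2)\in\cD$.

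Finally, Young's inequality is applied to redistribute the various powers: the horizontal-derivative contributions are absorbed into $\frac29\wt N$, while the remaining vertical-derivative factors collect into $C(M^2+M^{\frac12})N$. The largest power $M^2$ comes from the $\omega^2$-piece of the pressure term, which is the step I expect to be the main obstacle, since it requires a careful choice of the interpolation exponents in Proposition \ref{lemomega} to keep the power of $N$ equal to $1$ while balancing the horizontal and vertical regularity of $v^3$.
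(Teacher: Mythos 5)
Your proposal follows essentially the same route as the paper: the $H^{\frac12,0}$ energy identity for $v^3$, the splitting of the pressure into the diagonal piece, the mixed piece and the $(\pa_3v^3)^2$ piece plus the transport term, integration by parts on the mixed piece, and the product laws of Lemma \ref{lemproductlaw} combined with Proposition \ref{lemomega} and Young's inequality. The only small discrepancy is your prediction of where $M^2$ arises: in the paper's bookkeeping the $\omega^2$ pressure contribution only produces $M^{\frac53}N$, and the worst power $M^2N$ actually comes from the transport term $v^{\rm h}\cdot\nabla_{\rm h}v^3$ through the $\omega$-part of $v^{\rm h}$.
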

\begin{proof}
By taking the $H^{\frac12,0}$ inner product of the $v^3$ equation of \eqref{omegav3} with $v^3$, we write
\begin{equation}\label{estimatev3}
\begin{split}
\f{d}{dt}\|v^3(t)\|_{H^{\frac12,0}}^2+&2\|\nabla_\nu v^3(t)\|_{H^{\frac12,0}}^2
=-\sum_{i=1}^3 \bigl(Q_i(v,v) \, |\, v^3\bigr)_{H^{\frac12,0}}
\quad\mbox{with}\quad\\
 Q_1(v,v) \eqdefa& 2\partial_3\D^{-1} (\pa_3 v^3)^2+
  2\partial_3\D^{-1}\Bigl(\sum_{\ell,m=1}^2\partial_\ell v^m\partial_m  v^\ell\Bigr),\\
Q_2(v,v)\eqdefa&  4\partial_3 \D^{-1}\Bigl(\sum_{\ell=1}^2 \partial_3 v^\ell
 \partial_\ell v^3\Bigr)
\quad \mbox{and}\quad Q_3(v,v) \eqdefa 2v\cdot \nabla v^3.
 \end{split}
\end{equation}

\noindent$\bullet$\underline{
The estimate of  $\bigl(Q_{1}(v,v) \, |\, v^3\bigr)_{H^{\frac12,0}}$.}

In view of \eqref{helmdecom}, we get, by applying the law of product,
Lemma \ref{lemproductlaw},  and Proposition \ref{lemomega}, that
\begin{equation*}\begin{split}\label{3.23}
\bigl|\bigl(Q_{1}(v,v) \,|\, v^3\bigr)_{H^{\frac12,0}}\bigr|
&\leq\|Q_{1}(v,v)\|_{\hh}\|v^3\|_{\hh}\\
&\lesssim\bigl(\bigl\|(\pa_3 v^3)^2\bigr\|_{H^{-\f23,\f16}}
+\bigl\||\nablah\vh|^2\bigr\|_{H^{-\f23,\f16}}\bigr)\|v^3\|_{\hh}\\
&\lesssim\bigl(\|\pa_3 v^3\|_{H^{\f16,\f13}}^2
+\|\omega\|_{H^{\f16,\f13}}^2\bigr)
\|v^3\|_{\hh}\\
&\lesssim\bigl(\|\pa_3 v^3\|_{\hh}^{\f43}\|\pa_3^2v^3\|_{\hm}^{\f23}
+\bigl\|\omss\bigr\|_{L^2}^{\f23}
\bigl\|\nablah \omss\bigr\|_{L^2}
\bigl\|\pa_3 \omss\bigr\|_{L^2}\bigr)\|v^3\|_{\hh}.
\end{split}\end{equation*}
By applying Young's inequality, we achieve
\begin{equation}\label{3.24}
\bigl|\bigl(Q_{1}(v,v) \,|\,v^3\bigr)_{H^{\frac12,0}}\bigr|
\leq \f19\bigl\|\nablah\omss\bigr\|_{L^2}^2
+C\bigl(M^{\f53}+M^{\f12}\bigr)N.
\end{equation}

\noindent$\bullet$\underline{The estimate of $\bigl(Q_{2}(v,v) \,|\,v^3\bigr)_{H^{\frac12,0}}.$}

By using integration by parts, we write
\begin{equation}\begin{split}\label{S4splitQ2}
&\bigl(Q_{2}(v,v) \,|\,v^3\bigr)_{H^{\frac12,0}}=-\cB_{1}-\cB_{2},
\quad\mbox{where}\\
\cB_{1}\eqdefa 4\bigl(\pa_3\Delta^{-1}(v^\h& \cdot\na_\h v^3)
\,\big|\,\pa_3v^3\bigl)_{H^{\frac12,0}},\quad
\cB_{2}\eqdefa 4\bigl(\pa_3\Delta^{-1}( v^\h\cdot\na_\h\p_3 v^3)
\,\big|\,v^3\bigl)_{H^{\frac12,0}}.
\end{split}\end{equation}
We first get, by applying the law of product, Lemma \ref{lemproductlaw}, that
\begin{align*}
|\cB_{1}|&\lesssim \bigl\|\pa_3\Delta^{-1}(v^\h\cdot\na_\h v^3) \bigr\|_{H^{\frac12,0}}\|\pa_3v^3\|_{H^{\frac12,0}}\\
&\lesssim \|v^\h\cdot\na_\h v^3\|_{H^{-\frac56,\f13}}\|\pa_3v^3\|_{H^{\frac12,0}}\\
&\lesssim \|v^\h\|_{H^{\f23,\f13}}
\|\na_\h v^3\|_{(H^{-\f12})_\h(B^{\f12}_{2,1})_{\v}}
\|\p_3 v^3\|_{H^{\frac12,0}}.
\end{align*}
Yet by virtue of \eqref{helmdecom}, we deduce from
Proposition \ref{lemomega} that
\beq \label{ping1}
\begin{split}
\|v^\h\|_{H^{\f23,\f13}}
&\lesssim\|\omega\|_{H^{-\f13,\f13}}+\|\pa_3v^3\|_{H^{-\f13,\f13}}\\
&\lesssim\bigl\|\omss\bigr\|_{L^2}^{\f56}
\bigl\|\pa_3 \omss\bigr\|_{L^2}^{\f12}
+\|v^3\|_{\hh}^{\f16}\|\pa_3v^3\|_\hm^{\f13}
\|\pa_3^2v^3\|_\hm^{\f12}.
\end{split}
\eeq
And it follows from Proposition 2.22 of \cite{BCD} that
$$\|\na_\h v^3\|_{(H^{-\f12})_\h(B^{\f12}_{2,1})_{\v}}
\leq\|v^3\|_{\hh}^{\f12}\|\pa_3v^3\|_\hh^{\f12}.$$
As a result, it comes out
\begin{equation}\begin{split}\label{3.26}
|\cB_{1}|\lesssim \bigl(\bigl\|\omss\bigr\|_{L^2}^{\f56}
\bigl\|\pa_3 \omss\bigr\|_{L^2}^{\f12}
+\|v^3\|_{\hh}^{\f16}\|\pa_3v^3\|_\hm^{\f13}
\|\pa_3^2v^3\|_\hm^{\f12}\bigr)
\|v^3\|_{\hh}^{\f12}\|\pa_3v^3\|_\hh^{\f32}.
\end{split}\end{equation}

Along the same line, we deduce that
\beno
\begin{split}
|\cB_{2}|&\lesssim\bigl\|\D^{-1}(v^\h\cdot\na_\h\pa_3v^3)\bigr\|_{H^{\f12,\f12}}
\|\pa_3v^3\|_{H^{\f12,-\f12}}\\
& \lesssim\|v^\h\cdot\na_\h\pa_3v^3\|_{H^{-\f56,-\f16}}
\|v^3\|_{\hh}^{\f12}\|\pa_3v^3\|_{\hh}^{\f12}\\
& \lesssim\|v^\h\|_{H^{\f23,\f13}}\|\na_\h\pa_3v^3\|_{H^{-\f12,0}}
\|v^3\|_{\hh}^{\f12}\|\pa_3v^3\|_{\hh}^{\f12}\\
& \lesssim\bigl(\bigl\|\omss\bigr\|_{L^2}^{\f56}
\bigl\|\pa_3 \omss\bigr\|_{L^2}^{\f12}
+\|v^3\|_{\hh}^{\f16}\|\pa_3v^3\|_\hm^{\f13}
\|\pa_3^2v^3\|_\hm^{\f12}\bigr)
\|v^3\|_{\hh}^{\f12}\|\pa_3v^3\|_{\hh}^{\f32},
\end{split}
\eeno
from which and  \eqref{3.26}, we infer
\begin{equation}\begin{split}\label{3.28}
\bigl|\bigl(&Q_{2}(v,v) \,|\,v^3\bigr)_{H^{\frac12,0}}\bigr|\\
& \lesssim\bigl(\bigl\|\omss\bigr\|_{L^2}^{\f56}
\bigl\|\pa_3 \omss\bigr\|_{L^2}^{\f12}
+\|v^3\|_{\hh}^{\f16}\|\pa_3v^3\|_\hm^{\f13}
\|\pa_3^2v^3\|_\hm^{\f12}\bigr)
\|v^3\|_{\hh}^{\f12}\|\pa_3v^3\|_{\hh}^{\f32}\\
&\leq C\bigl(M^{\f23}+M^{\f12}\bigr)N.
\end{split}\end{equation}

\noindent$\bullet$\underline{The  estimate  of $\bigl(Q_{3}(v,v) \,|\,v^3\bigr)_{H^{\frac12,0}}.$}

We deduce from Lemma \ref{lemproductlaw} and  Proposition \ref{lemomega} that
\beno
\begin{split}
\bigl|\bigl(\vh\cdot\nablah v^3 \,|\,v^3\bigr)_{H^{\frac12,0}}\bigr|
&\lesssim \|\vh\cdot\nablah v^3\|_{H^{-\f12,0}}
\|v^3\|_{H^{\frac32,0}}\\
&\lesssim \|\vh\|_{H^{\f23,\f13}}\|\nablah v^3\|_{H^{-\f16,\f16}}
\|\nablah v^3\|_{H^{\frac12,0}}.
\end{split}
\eeno
Inserting \eqref{ping1} into the above inequality yields
\beno
\begin{split}
\bigl|\bigl(\vh\cdot\nablah v^3 \,|\,v^3\bigr)_{H^{\frac12,0}}\bigr|
 \lesssim \bigl(\bigl\|\omss\bigr\|_{L^2}^{\f56}
\bigl\|\pa_3 \omss\bigr\|_{L^2}^{\f12}
&+\|v^3\|_{\hh}^{\f16}\|\pa_3v^3\|_\hm^{\f13}
\|\pa_3^2v^3\|_\hm^{\f12}\bigr)\\
&\qquad\quad\times
\|v^3\|_{\hh}^{\f12}\|\nablah v^3\|_{\hh}^{\f43}\|\pa_3v^3\|_{\hh}^{\f16}.
\end{split}
\eeno
While it follows from the law of product,  Lemma \ref{lemproductlaw}, that
\begin{align*}
\bigl|\bigl(v^3\pa_3 v^3 \,|\,v^3\bigr)_{H^{\frac12,0}}\bigr|
&\lesssim \|v^3\pa_3v^3\|_{H^{0,-\f14}}\|v^3\|_{H^{1,\f14}}\\
&\lesssim\|v^3\|_{H^{\f12,\f14}}\|\pa_3v^3\|_{\hh}\|v^3\|_{H^{1,\f14}}\\
&\lesssim\|v^3\|_\hh\|\nablah v^3\|_\hh^{\f12}\|\pa_3 v^3\|_\hh^{\f32}.
\end{align*}
By combining the above two inequalities, we obtain
\begin{equation*}\begin{split}\label{3.29}
\bigl|\bigl(Q_{3} (v,v)\,|\,v^3\bigr)_{H^{\frac12,0}}\bigr|
\lesssim&\bigl(\bigl\|\omss\bigr\|_{L^2}^{\f56}
\bigl\|\pa_3 \omss\bigr\|_{L^2}^{\f12}
+\|v^3\|_{\hh}^{\f16}\|\pa_3v^3\|_\hm^{\f13}
\|\pa_3^2v^3\|_\hm^{\f12}\bigr)\\
&\times\|v^3\|_{\hh}^{\f12}\|\nablah v^3\|_{\hh}^{\f43}\|\pa_3v^3\|_{\hh}^{\f16}
+\|v^3\|_\hh\|\nablah v^3\|_\hh^{\f12}\|\pa_3 v^3\|_\hh^{\f32}.
\end{split}\end{equation*}
Applying Young's inequality yields
\begin{equation}\begin{split}\label{3.30}
\bigl|\bigl(Q_{3} (v,v)\,|&\,v^3\bigr)_{H^{\frac12,0}}\bigr|
\leq \f19\|\nablah v^3\|_\hh^2+C\bigl(M^2+M^{\f23}\bigr)N.
\end{split}\end{equation}

By inserting the estimates \eqref{3.24},~\eqref{3.28}
and \eqref{3.30} into \eqref{estimatev3}, we achieve
 \eqref{estimatepa3v3H120}. This completes the proof of this proposition.
\end{proof}

\subsection{The proof of Theorem \ref{thmmain}}

With the {\it a priori} estimates obtained in
the previous subsection, we are now in a position to
complete the proof of Theorem \ref{thmmain}.

\begin{proof}[Proof of Theorem \ref{thmmain}]
We first deduce from
Theorem \ref{thm1.4ofCZ} that
 under the assumptions of Theorem \ref{thmmain}, $(NS_\nu)$ has a unique solution
$v\in \cE_{T^\ast}$~(see \eqref{defcE} for the definition),
where $T^\ast$ is the maximal time of  existence.

In what follows, we shall prove that  \eqref{thmmainestimate}
holds for any $t\in[0,T^\ast[.$
To do this,  we define
\begin{equation}\label{defTstar}\begin{split}
T^\star \eqdefa\sup\Bigl\{\, T\in ]0,T^\ast[\,:\,\mbox{so that \eqref{thmmainestimate} holds for any $t\in[0,T]$}
\, \Bigr\}.
\end{split}\end{equation}
We are going to prove that $T^\star=T^\ast.$ Otherwise,
if $T^\star<T^\ast$,  for any $t\in[0,T^\star[$, we get, by summing up
\eqref{estimateomss},~\eqref{estimatepa3v3Ht}, and \eqref{estimatepa3v3H120}, that
\begin{equation*}
\frac{d}{dt}M(t)+\wt N(t)+\nu^2N(t)\\
\leq C\bigl(M^2(t)+M^{\f12}(t)\bigr)N(t).
\end{equation*}
Thanks to \eqref{defTstar}, for any $t< T^\star,$ by
integrating the above inequality over $[0,t],$ we obtain
\begin{equation}\label{S5eq23}
M(t)+\int_0^t\bigl(\wt N(t')+\nu^2 N(t')\bigr)\,dt'
\leq M_0+C \bigl(M_0^2+M_0^{\f12}\bigr)\int_0^t N(t')\,dt'.
\end{equation}
On the other hand, if $C_1$ in \eqref{thmmaincondi} is so large
that $C_1\geq 2C$, we find
\begin{equation*}\label{continuitylargecondi}
\frac14\nu^2\geq C \bigl(M_0^2+M_0^{\frac12}\bigr).
\end{equation*}
Then we deduce from \eqref{S5eq23} that
$$M(t)+\frac34\int_0^t\bigl(\wt N(t')+\nu^2 N(t')\bigr)\, dt'
\leq M_0.$$
This in particular implies that for any $t$ in $[0,T^\star[$,
there holds
$$\bigl\|\omss(t)\bigr\|_{L^2}^2+\|\nabla v^3(t)\|_{\hm}^2
+\int_0^t\left(\bigl\|\nabla_\nu\omss\bigr\|_{L^2}^2
+\bigl\|\nabla_\nu \nabla v^3\bigr\|_{\hm}^2\right)\, dt'
\leq \frac43 M_0,$$
which contracts with the definition of $T^\star$ given by \eqref{defTstar}.
This in turn shows that $T^\star=T^\ast$,
and \eqref{thmmainestimate} holds for any $t\in[0,T^\ast[$.

Now if $T^\ast<\infty$, by virtue of  \eqref{thmmainestimate} and \eqref{v3anisotropic},
 for any $p\in]4,6[,$ we find
\beq\label{star}
\begin{split}
\int_0^{T^\ast}\|v^3(t)\|_{H^{\frac12+\frac2p}}^p\,dt
\lesssim&\sup_{t\in [0,T^\ast[}\Bigl( \|\pa_3 v^3(t)\|_{\hm}^{\frac{p-4}{2}}\|v^3(t)\|_{\hh}^{\f{p}{2}}\Bigr)
\int_0^{T^\ast}\|\pa_3^2 v^3(t)\|_{\hm}^{2}\,dt\\
&+\sup_{t\in [0,T^\ast[}\|v^3(t)\|_{H^{\frac12,0}}^{p-2}\int_0^{T^\ast}\|\nabla_\h v^3(t)\|_{H^{\frac12,0}}^2\,dt\\
\lesssim& \bigl(1+\nu^{-2}\bigr)M_0^{\f{p}{2}},
\end{split}
\eeq
which contradicts with the blow-up criterion \eqref{blowupCZ5}.
Hence $T^\ast=\infty$, and the estimate
\eqref{thmmainestimate} holds for any $t\in[0,\infty[$.
 This completes the proof of Theorem \ref{thmmain}.
\end{proof}

\setcounter{equation}{0}
\section{Global solution of \eqref{S1eq1} with a fast variable}

In this section, we shall adapt the arguments in the previous section to prove Theorem \ref{thmnu}.

\subsection{{\it A priori} estimates} Taking $L^{2}$ scalar product
of $\omega$ equation in \eqref{omegav3nu} with $\om_{\f12}$ gives
\begin{equation}\label{4.1}
\frac{d}{dt}\bigl\|\omss\bigr\|_{L^2}^2+
\frac43 \bigl\|\nabla_\nu\omss\bigr\|_{L^2}^2
=\frac32\nu\int_{\R^3}\pa_3v^3|\omega|^{\frac32}
-(\pa_3\vh\cdot\nablah^\perp v^3)\omega_{\frac12}\,dx.
\end{equation}
We point out that the main difficulty on the estimate of the right-hand side of \eqref{4.1} lies in the fact that there appears $\nu.$
We first get, by using integration by parts, that
\begin{equation}\begin{split}\label{4.2}
\nu\Bigl|\int_{\R^3}\pa_3v^3|\omega|^{\frac32}\,dx\Bigr|
&\lesssim\nu\int_{\R^3}|v^3||\pa_3\omega||\omega|^{\frac12}\,dx\\
&\lesssim\nu\|v^3\|_{L^6}\|\pa_3\omega\|_{L^{\frac32}}
\|\om\|_{L^3}^{\frac12}.
\end{split}\end{equation}
Yet it follows from Sobolev inequality and  Proposition \ref{lemomega} that
\begin{align*}
\|v^3\|_{L^6}\|\om\|_{L^3}^{\frac12}
&\lesssim\|v^3\|_{H^{\f23,\f13}}\|\om\|_{H^{\f13,\f16}}^{\frac12}\\
&\lesssim\|v^3\|_{\hs}^{\frac12}
\|\pa_3 v^3\|_{\hs}^{\frac12}
\bigl\|\omss\bigr\|_{L^2}^{\frac16}
\bigl\|\nablah\omss\bigr\|_{L^2}^{\frac13}
\bigl\|\pa_3\omss\bigr\|_{L^2}^{\frac16}.
\end{align*}
Inserting the above inequality and  \eqref{2.3} into
 \eqref{4.2} gives rise to
\begin{equation}\begin{split}\label{4.3}
\nu\Bigl|\int_{\R^3}\pa_3v^3|\omega|^{\frac32}\,dx\Bigr|
\lesssim\nu^{-\f23}\| v^3\|_{\hs}^{\frac12}
\|\nu\pa_3 v^3\|_{\hs}^{\frac12}
\bigl\|\omss\bigr\|_{L^2}^{\frac12}
\bigl\|\nablah\omss\bigr\|_{L^2}^{\frac13}
\bigl\|\nu\pa_3\omss\bigr\|_{L^2}^{\frac76}.
\end{split}\end{equation}
While by first applying \eqref{lem3.1-2} with $f=\pa_3\omega$ or $\partial_3^2 v^3$
and $g=v^3$, and then \eqref{2.3} and Proposition \ref{lemomega}, we achieve
\begin{equation*}\begin{split}
\nu\Bigl|\int_{\R^3}(\pa_3\vh\cdot\nablah^\perp v^3)\omega_{\frac12}\,dx\Bigr|
&\lesssim\nu \bigl(\|\pa_3\omega\|_{L^{\f32}}+\|\pa_3^2 v^3\|_{L^{\f32}}\bigr)
\|\nablah v^3\|_{H^{0,\f16}}
\bigl\|\omss\bigr\|_{L^2}^{\frac13}\bigl\|\pa_3\omss\bigr\|_{L^2}^{\frac13}\\
&\lesssim\nu^{-\f23}
\Bigl(\bigl\|\omss\bigr\|_{L^2}^{\frac13}\bigl\|\nu\pa_3\omss\bigr\|_{L^2}
+\bigl\|\vss\bigr\|_{L^2}^{\frac13}\bigl\|\nu\pa_3\vss\bigr\|_{L^2}\Bigr)\\
&\quad\times\|v^3\|_{\hs}^{\f13}\|\nablah v^3\|_{\hs}^{\f13}
\|\nu\pa_3 v^3\|_{\hs}^{\f13}
\bigl\|\omss\bigr\|_{L^2}^{\frac13}\bigl\|\nu\pa_3\omss\bigr\|_{L^2}^{\frac13}.
\end{split}\end{equation*}
By inserting the above inequality and \eqref{4.3} into \eqref{4.1}, we find
\begin{equation}\begin{split}\label{4.4}
\frac{d}{dt}\bigl\|\omss\bigr\|_{L^2}^2+
\frac43 \bigl\|\nabla_\nu\omss&\bigr\|_{L^2}^2
\lesssim\nu^{-\f23}\| v^3\|_{\hs}^{\frac12}
\|\nabla_\nu v^3\|_{\hs}^{\frac12}
\bigl\|\omss\bigr\|_{L^2}^{\frac12}
\bigl\|\nabla_\nu\omss\bigr\|_{L^2}^{\frac32}\\
&+\nu^{-\f23}
\Bigl(\bigl\|\omss\bigr\|_{L^2}^{\frac13}\bigl\|\nabla_\nu\omss\bigr\|_{L^2}
+\bigl\|\vss\bigr\|_{L^2}^{\frac13}\bigl\|\nabla_\nu\vss\bigr\|_{L^2}\Bigr)\\
&\qquad\qquad\qquad\times\|v^3\|_{\hs}^{\f13}\|\nabla_\nu v^3\|_{\hs}^{\f23}
\bigl\|\omss\bigr\|_{L^2}^{\frac13}\bigl\|\nabla_\nu\omss\bigr\|_{L^2}^{\frac13}.
\end{split}\end{equation}

Next, taking $L^{2}$ scalar product
of vertical derivative to the $v^3$ equation in \eqref{omegav3nu} with
$(\p_3v^3)_{\f12}$ gives
\begin{equation}\begin{split}\label{4.5}
&\frac{d}{dt}\bigl\|\vss\bigr\|_{L^2}^2+
\frac43 \bigl\|\nabla_\nu\vss\bigr\|_{L^2}^2
=-\f32\sum_{i=1}^2\int_{\R^3} P_{i,\nu}(v,v)
| (\p_3v^3)_{\f12}\,dx
\quad\mbox{with}\quad\\
&\ P_{1,\nu}(v,v) \eqdefa 2\nu\bigl(\rm{Id}+(\nu\pa_3)^2\Delta_\nu^{-1}\bigr)
(\pa_3 v^3)^2+2\nu(\nu\pa_3)^2\Delta_\nu^{-1}
\Bigl(\sum\limits_{\ell,m=1}^2\pa_\ell v^m\pa_m v^\ell\Bigr),\\
&\qquad\qquad\quad P_{2,\nu}(v,v)\eqdefa 2\bigl(\rm{Id}
+2(\nu\pa_3)^2\Delta_\nu^{-1}\bigr)\bigl(\nu\pa_3 v^\h\cdot\na_\h v^3\bigr).
\end{split}\end{equation}
Since $(\nu\pa_3)^2\Delta_\nu^{-1}$
is a Fourier multiplier, which is bounded in any $L^p$ for $p\in ]1,\infty[$ (see Theorem 0.2.6 of \cite{Sogge} for instance),
 we can deduce from  \eqref{helmdecom} that
\begin{equation}\begin{split}\label{4.6}
\Bigl|\int_{\R^3} P_{1,\nu}(v,v) | (\p_3v^3)_{\f12}\,dx\Bigr|
&\lesssim\nu\Bigl(\bigl\|(\pa_3 v^3)^2\bigr\|_{L^{\f32}}
+\bigl\||\nablah\vh|^2\bigr\|_{L^{\f32}}\Bigr)
\bigl\|(\p_3v^3)_{\f12}\bigr\|_{L^3}\\
&\lesssim\nu\bigl(\|\pa_3 v^3\|_{L^3}^2
+\|\omega\|_{L^3}^2\bigr)
\bigl\|\vss\bigr\|_{L^2}^{\f23}.
\end{split}\end{equation}
It follows from Sobolev inequality that
\begin{align*}
\|\pa_3 v^3\|_{L^3}^2
+\|\omega\|_{L^3}^2
&\lesssim\|\pa_3 v^3\|_{H^{\f13,\f16}}^2
+\|\omega\|_{H^{\f13,\f16}}^2\\
&\lesssim\|\pa_3 v^3\|_{H^{-\f13,\f56}}^{\f23}
\|\pa_3v^3\|_{H^{\f23,-\f16}}^{\f43}
+\|\omega\|_{H^{\f13,\f16}}^2.
\end{align*}
So that applying Proposition \ref{lemomega} leads to
\begin{equation*}\begin{split}
\|\pa_3 v^3\|_{L^3}^2
+\|\omega\|_{L^3}^2
\lesssim\bigl\|\vss\bigr\|_{L^2}^{\f29}
\bigl\|\pa_3 \vss\bigr\|_{L^2}^{\f23}
\|\pa_3v^3\|_{H^{\f23,-\f16}}^{\f43}
+\bigl\|\omss\bigr\|_{L^2}^{\f23}
\bigl\|\nablah \omss\bigr\|_{L^2}^{\f43}
\bigl\|\pa_3 \omss\bigr\|_{L^2}^{\f23}.
\end{split}\end{equation*}
Substituting the above inequality into \eqref{4.6}, we achieve
\begin{equation}\begin{split}\label{4.7}
\Bigl|\int_{\R^3} P_{1,\nu}(v,v) | (\p_3v^3)_{\f12}\,dx\Bigr|
\lesssim\Bigl(&\nu^{-1}\bigl\|\vss\bigr\|_{L^2}^{\f29}
\bigl\|\nu\pa_3 \vss\bigr\|_{L^2}^{\f23}
\|\nu\pa_3v^3\|_{H^{\f23,-\f16}}^{\f43}\\
&+\nu^{\f13}\bigl\|\omss\bigr\|_{L^2}^{\f23}
\bigl\|\nablah \omss\bigr\|_{L^2}^{\f43}
\bigl\|\nu\pa_3 \omss\bigr\|_{L^2}^{\f23}\Bigr)
\bigl\|\vss\bigr\|_{L^2}^{\f23}.
\end{split}\end{equation}
To handle the second term on the right-hand side of \eqref{4.5}, we observe from Theorem 2.40 of \cite{BCD} that
\beno
\begin{split}
\Bigl|\int_{\R^3} P_{2,\nu}(v,v) | (\p_3v^3)_{\f12}\,dx\Bigr|
&\lesssim\nu\||D_3|^{-\f16}\bigl(\pa_3 \vh\cdot\nablah v^3\bigr)\|_{L^{\f32}}
\bigl\||D_3|^{\f16}(\p_3v^3)_{\f12}\bigr\|_{L^3}\\
&\lesssim\nu\|\pa_3 \vh\cdot\nablah v^3\|_{L^{\f32}_\h\bigl(L^{\f65}_\v\bigr)}\bigl\|(\p_3v^3)_{\f12}\bigr\|_{L^3_\h\bigl(B^{\f16}_{3,2}\bigr)_\v}.
\end{split}
\eeno
But we deduce from Lemma 5.1 of \cite{CZ} that
\beno
\begin{split}
\bigl\|(\p_3v^3)_{\f12}(x_\h,\cdot)\bigr\|_{\bigl(B^{\f16}_{3,2}\bigr)_\v}\lesssim &\bigl\|(\p_3v^3)_{\f12}(x_\h,\cdot)\bigr\|_{\bigl(B^{\f16}_{3,\f32}\bigr)_\v}\\
\lesssim &\bigl\|(\p_3v^3)_{\f34}(x_\h,\cdot)\bigr\|_{\bigl(B^{\f14}_{2,1}\bigr)_\v}\\
\lesssim &\bigl\|\vss(x_\h,\cdot)\bigr\|_{L^2}^{\f12}\bigl\|\pa_3\vss(x_\h,\cdot)\bigr\|_{L^2}^{\f16},
\end{split}
\eeno
Taking $L^3_\h$ to the above inequality yields
\beno
\bigl\|(\p_3v^3)_{\f12}\bigr\|_{L^3_\h\bigl(B^{\f16}_{3,2}\bigr)_\v}\lesssim \bigl\|\vss\bigr\|_{L^2}^{\f12}\bigl\|\pa_3\vss\bigr\|_{L^2}^{\f16}.
\eeno
As a result, it comes out
\begin{equation}\begin{split}\label{4.8}
\Bigl|\int_{\R^3} P_{2,\nu}(v,v) | (\p_3v^3)_{\f12}\,dx\Bigr|
&\lesssim\nu\|\pa_3\vh\|_{L_\h^6(L_\v^{\f32})}
\|\nablah v^3\|_{L_\h^2(L_\v^6)}
\bigl\|\vss\bigr\|_{L^2}^{\f12}\bigl\|\pa_3\vss\bigr\|_{L^2}^{\f16}.
\end{split}\end{equation}
While it follows from \eqref{helmdecom}
and  \eqref{2.3} that
\begin{align*}
\|\pa_3\vh\|_{L_\h^6(L_\v^{\f32})}
&\lesssim\|\pa_3\nablah\vh\|_{L^{\f32}}\\
&\lesssim\|\pa_3^2 v^3\|_{L^{\f32}}+\|\pa_3\omega\|_{L^{\f32}}\\
&\lesssim\bigl\|\vss\bigr\|_{L^2}^{\f13}
\bigl\|\pa_3 \vss\bigr\|_{L^2}
+\bigl\|\omss\bigr\|_{L^2}^{\f13}
\bigl\|\pa_3 \omss\bigr\|_{L^2},
\end{align*}
and we deduce  from Sobolev inequality
that
\beno
\begin{split}
\|\nablah v^3\|_{L_\h^2L_\v^6}\lesssim&\|v^3\|_{H^{1,\f13}}
\lesssim\|v^3\|_{H^{\f53,-\f16}}^{\f13}
\|v^3\|_{H^{\f23,\f7{12}}}^{\f23}\\
\lesssim&\|\nablah v^3\|_\hs^{\f13}
\|v^3\|_\hs^{\f16}\|\pa_3v^3\|_\hs^{\f12}. \end{split} \eeno
Substituting these above two inequalities into \eqref{4.8} leads to
\begin{equation*}\begin{split}
\Bigl|\int_{\R^3} P_{2,\nu}(v,v) | &(\p_3v^3)_{\f12}\,dx\Bigr|
\lesssim\nu^{-\f23}\Bigl(\bigl\|\vss\bigr\|_{L^2}^{\f13}
\bigl\|\nu\pa_3 \vss\bigr\|_{L^2}
+\bigl\|\omss\bigr\|_{L^2}^{\f13}
\bigl\|\nu\pa_3 \omss\bigr\|_{L^2}\Bigr)\\
&\times\|v^3\|_\hs^{\f16}\|\nablah v^3\|_\hs^{\f13}
\|\nu\pa_3v^3\|_\hs^{\f12}
\bigl\|\vss\bigr\|_{L^2}^{\f12}\bigl\|\nu\pa_3\vss\bigr\|_{L^2}^{\f16}.
\end{split}\end{equation*}
Inserting the above estimate and \eqref{4.7} into \eqref{4.5} results in
\begin{equation}\begin{split}\label{4.9}
\frac{d}{dt}\bigl\|\vss\bigr\|_{L^2}^2+
&\frac43 \bigl\|\nabla_\nu\vss\bigr\|_{L^2}^2
\lesssim\nu^{-1}\bigl\|\vss\bigr\|_{L^2}^{\f89}
\bigl\|\nabla_\nu \vss\bigr\|_{L^2}^{\f23}
\|\nabla_\nu v^3\|_{H^{\f23,-\f16}}^{\f43}\\
&+\nu^{\f13}\bigl\|\omss\bigr\|_{L^2}^{\f23}
\bigl\|\nabla_\nu \omss\bigr\|_{L^2}^2
\bigl\|\vss\bigr\|_{L^2}^{\f23}
+\nu^{-\f23}\|v^3\|_\hs^{\f16}\|\nabla_\nu v^3\|_\hs^{\f56}\\
&\qquad\qquad\quad\times\Bigl(\bigl\|\vss\bigr\|_{L^2}^{\f56}
\bigl\|\nu\pa_3 \vss\bigr\|_{L^2}^{\f76}
+\bigl\|\omss\bigr\|_{L^2}^{\f56}
\bigl\|\nu\pa_3 \omss\bigr\|_{L^2}^{\f76}\Bigr).
\end{split}\end{equation}

Let us turn to the estimate of $\nablah v^3$. Here  we
shall perform  $H^{-\f13,-\f16}$ estimate, which has the same scaling as the norm of $L^{\f32},$   for $\nablah v^3$ instead of $L^{\f32}$ estimate.
The main reason is that
 we shall  use integration by parts to handle the term,
$\D_\nu^{-1}\bigl(\partial_3 \vh\cdot\nablah v^3\bigr),$
  which appears  in the pressure~(see \eqref{4.13} below).

In fact, by taking $\hf$ inner product
of the  horizontal derivative. $\nablah,$ to the $v^3$ equation in \eqref{omegav3nu} with
$\nablah v^3$, we find
\begin{equation}\begin{split}\label{4.10}
&\f{d}{dt}\|v^3\|_{\hs}^2+2\|\nabla_\nu v^3\|_{\hs}^2
=-\sum_{i=1}^4 \bigl(Q_{i,\nu}(v,v) \, |\,\nablah v^3\bigr)_{\hf}
\quad\mbox{with}\\
&\qquad\qquad Q_{1,\nu}(v,v) \eqdefa 2\nu^3\nablah\partial_3\D_\nu^{-1}
\Bigl((\pa_3 v^3)^2+\sum_{\ell,m=1}^2\partial_\ell v^m\partial_m v^\ell\Bigr),\\
& Q_{2,\nu}(v,v)\eqdefa  4\nu^3\nablah\partial_3 \D_\nu^{-1}
\bigl(\partial_3 \vh\cdot\nablah v^3\bigr)\quad\mbox{and}\quad
Q_{3,\nu}(v,v) \eqdefa 2\nu\nablah\bigl(v\cdot \nabla v^3\bigr).
\end{split}\end{equation}

\noindent$\bullet$\underline{
The estimate of $\bigl(Q_{1,\nu}(v,v) \, |\,\nablah v^3\bigr)_{\hf}$.}

In view of \eqref{helmdecom}, we get, by applying the law of product,
 Lemma \ref{lemproductlaw}, that
\begin{equation*}\begin{split}\label{4.11}
\bigl|\bigl(Q_{1,\nu}\,|\,\nablah v^3\bigr)_{\hf}\bigr|
&\lesssim\nu^2\Bigl(\bigl\|(\pa_3v^3)^2\bigr\|_{\hf}+
\bigl\||\nablah \vh|^2\bigr\|_{\hf}\Bigr)
\|\nablah v^3\|_{\hf}\\
&\lesssim\nu^2\bigl(\|\pa_3 v^3\|_{H^{\f13,\f16}}^2+\|\omega\|_{H^{\f13,\f16}}^2\bigr)
\|v^3\|_{\hs}.
\end{split}\end{equation*}
Yet it follows from  Proposition \ref{lemomega} that
\begin{align*}
\|\pa_3 v^3\|_{H^{\f13,\f16}}^2+\|\omega\|_{H^{\f13,\f16}}^2
\lesssim &\|\pa_3 v^3\|_{H^{-\f13,\f56}}^{\f23}\|\pa_3v^3\|_{\hs}^{\frac43}
+\bigl\|\omss\bigr\|_{L^2}^{\frac23}\bigl\|\nablah\omss\bigr\|_{L^2}^{\frac43}
\bigl\|\pa_3\omss\bigr\|_{L^2}^{\frac23}\\
\lesssim &\bigl\|\vss\bigr\|_{L^2}^{\f29}\bigl\|\pa_3\vss\bigr\|_{L^2}^{\f23}
\|\pa_3 v^3\|_{\hs}^{\frac43}\\
&+\bigl\|\omss\bigr\|_{L^2}^{\frac23}\bigl\|\nablah\omss\bigr\|_{L^2}^{\frac43}
\bigl\|\pa_3\omss\bigr\|_{L^2}^{\frac23}.
\end{align*}
As a result, it comes out
\begin{equation}\begin{split}\label{4.12}
\bigl|\bigl(Q_{1,\nu}\,|\,\nablah v^3\bigr)_{\hf}\bigr|
\lesssim\Bigl(\bigl\|\vss&\bigr\|_{L^2}^{\f29}
\bigl\|\nu\pa_3\vss\bigr\|_{L^2}^{\f23}\|\nu\pa_3 v^3\|_{\hs}^{\frac43}\\
+&\nu^{\f43}\bigl\|\omss\bigr\|_{L^2}^{\frac23}\bigl\|\nablah\omss\bigr\|_{L^2}^{\frac43}
\bigl\|\nu\pa_3\omss\bigr\|_{L^2}^{\frac23}\Bigr)\|v^3\|_{\hs}.
\end{split}\end{equation}

\noindent$\bullet$\underline{
The estimate of $\bigl(Q_{2,\nu}(v,v) \, |\,\nablah v^3\bigr)_{\hf}$.}

By using integration by parts, one has
\begin{equation}\begin{split}\label{4.13}
\bigl(Q_{2,\nu}\,|\,\nablah v^3\bigr)_{\hf}=
&-4\nu^3\bigl(\nablah\pa_3\Delta_\nu^{-1}
(v^\h\cdot\na_\h v^3)\,|\,\pa_3\nablah v^3\bigl)_{\hf},\\
&-4\nu^3\bigl(\nablah\pa_3\Delta_\nu^{-1}
(v^\h\cdot\na_\h\pa_3 v^3)\,|\,\nablah v^3\bigl)_{\hf}\\
\eqdefa& \cB_{1,\nu}+\cB_{2,\nu}.
\end{split}\end{equation}
Applying the law of product, Lemma \ref{lemproductlaw}, that
\begin{align*}
|\cB_{1,\nu}|
&\lesssim \nu^2\|v^\h\cdot\na_\h v^3\|_{\hf}
\|\pa_3\nablah v^3\|_{\hf}\\
&\lesssim \nu^2\|v^\h\|_{(B^1_{2,1})_\h(H^{\f13})_{\rm v}}
\|\na_\h v^3\|_{H^{-\frac13,0}}
\|\pa_3 v^3\|_{\hs}.
\end{align*}
While thanks to \eqref{helmdecom}, we get, by applying Proposition \ref{lemomega}, that
\begin{equation}\begin{split}\label{4.14}
\|v^\h\|_{(B^1_{2,1})_\h(H^{\f13})_{\rm v}}
\lesssim&\|\nablah\vdiv\|_{H^{-\f13,\f7{12}}}^{\f23}
\|\nablah\vdiv\|_{H^{\f23,-\f16}}^{\f13}
+\|\nablah\vcurl\|_{B^{0,\f13}_{2,1}}\\
\lesssim&\bigl\|\vss\bigr\|_{L^2}^{\frac7{18}}
\bigl\|\pa_3\vss\bigr\|_{L^2}^{\frac12}
\|\pa_3v^3\|_{\hs}^{\f13}\\
&+\bigl\|\omss\bigr\|_{L^2}^{\frac12}
\bigl\|\nablah\omss\bigr\|_{L^2}^{\frac13}
\bigl\|\pa_3\omss\bigr\|_{L^2}^{\frac12}.
\end{split}\end{equation}
As a consequence, it comes out
\begin{equation}\begin{split}\label{4.15}
|\cB_{1,\nu}|
\lesssim\Bigl(&\bigl\|\vss\bigr\|_{L^2}^{\frac7{18}}
\bigl\|\nu\pa_3\vss\bigr\|_{L^2}^{\frac12}
\|\nu\pa_3v^3\|_{\hs}^{\f13}\\
&+\nu^{\f13}\bigl\|\omss\bigr\|_{L^2}^{\frac12}
\bigl\|\nablah\omss\bigr\|_{L^2}^{\frac13}
\bigl\|\nu\pa_3\omss\bigr\|_{L^2}^{\frac12}\Bigr)
\|v^3\|_{\hs}^{\f56}\|\nu\pa_3v^3\|_{\hs}^{\f76}.
\end{split}\end{equation}
Along the same line, we  deduce that
\begin{align*}
|\cB_{2,\nu}|
&\lesssim \nu^2\|v^\h\cdot\na_\h\pa_3 v^3\|_{H^{-\f13,-\f13}}
\|\nablah v^3\|_{H^{-\f13,0}}\\
&\lesssim \nu^2\|v^\h\|_{(B^1_{2,1})_\h(H^{\f13})_{\rm v}}
\|\na_\h\pa_3 v^3\|_{\hf}
\|\na_\h v^3\|_{\hf}^{\f56}\|\pa_3\na_\h v^3\|_{\hf}^{\f16},
\end{align*}
from which and \eqref{4.14},  we infer
that $|\cB_{2,\nu}|$ shares the same estimate of $|\cB_{1,\nu}|$.

Therefore, we obtain
\begin{equation}\begin{split}\label{4.16}
\bigl|\bigl(Q_{2,\nu} \,|\,\nablah v^3\bigr)_{\hf}\bigr|
&\lesssim\Bigl(\bigl\|\vss\bigr\|_{L^2}^{\frac7{18}}
\bigl\|\nu\pa_3\vss\bigr\|_{L^2}^{\frac12}
\|\nu\pa_3v^3\|_{\hs}^{\f13}\\
&+\nu^{\f13}\bigl\|\omss\bigr\|_{L^2}^{\frac12}
\bigl\|\nablah\omss\bigr\|_{L^2}^{\frac13}
\bigl\|\nu\pa_3\omss\bigr\|_{L^2}^{\frac12}\Bigr)
\|v^3\|_{\hs}^{\f56}\|\nu\pa_3v^3\|_{\hs}^{\f76}.
\end{split}\end{equation}

\noindent$\bullet$\underline{
The estimate of $\bigl(Q_{3,\nu}(v,v) \, |\,\nablah v^3\bigr)_{\hf}$.}

Due to $\dive v=0,$ by using  integration by parts, we  write
\begin{equation}\begin{split}\label{4.17}
\bigl(Q_{3,\nu} &\,|\,\nablah v^3\bigr)_{\hf}
=2\nu\bigl(\dive\nablah(v\otimes v^3)
 \,|\,\nablah v^3\bigr)_{\hf}\\
=&-2\nu\bigl(\nablah(\vh\otimes v^3) \,|\,\nablah^2 v^3\bigr)_{\hf}-2\nu\bigl(\nablah (v^3\otimes v^3) \,|\,\pa_3\nablah v^3\bigr)_{\hf}\\
\lesssim&\nu\|\vh\otimes v^3\|_\hs\|\nablah v^3\|_\hs
+\nu\bigl\|\nablah (v^3\otimes v^3)\bigr\|_\hf\|\pa_3 v^3\|_\hs.
\end{split}\end{equation}
Applying the law of product, Lemma \ref{lemproductlaw},
and  \eqref{4.14} yields
\begin{align*}
\|\vh\otimes v^3\|_\hs
\lesssim &\|v^\h\|_{(B^1_{2,1})_\h(H^{\f13})_{\rm v}}
\|v^3\|_{H^{\f23,0}}\\
\lesssim& \Bigl(\bigl\|\vss\bigr\|_{L^2}^{\frac7{18}}
\bigl\|\pa_3\vss\bigr\|_{L^2}^{\frac12}
\|\pa_3v^3\|_{\hs}^{\f13}\\
&\quad+\bigl\|\omss\bigr\|_{L^2}^{\frac12}
\bigl\|\nablah\omss\bigr\|_{L^2}^{\frac13}
\bigl\|\pa_3\omss\bigr\|_{L^2}^{\frac12}\Bigr)
\|v^3\|_\hs^{\f56}\|\pa_3 v^3\|_\hs^{\f16},
\end{align*}
and
\begin{align*}
\bigl\|\nablah (v^3\otimes v^3)\bigr\|_\hf
&\lesssim\|v^3\|_{H^{\f23,\f13}}\|\nablah v^3\|_{L^2}\\
&\lesssim\|v^3\|_\hs\|\nablah v^3\|_\hs^{\f13}
\|\pa_3 v^3\|_\hs^{\f23}.
\end{align*}
Substituting the above two estimates into \eqref{4.17} leads to
\begin{equation}\begin{split}\label{4.18}
\bigl|\bigl(Q_{3,\nu}\,|\,v^3&\bigr)_{H^{\frac12,0}}\bigr|
\lesssim\nu^{-\f23}\|v^3\|_\hs\|\nabla_\nu v^3\|_\hs^2
+\|v^3\|_\hs^{\f56}\|\nabla_\nu v^3\|_\hs^{\f76}\\
&\times\Bigl(\bigl\|\vss\bigr\|_{L^2}^{\frac7{18}}
\bigl\|\nabla_\nu\vss\bigr\|_{L^2}^{\frac12}
\|\nabla_\nu v^3\|_{\hs}^{\f13}
+\nu^{\f13}\bigl\|\omss\bigr\|_{L^2}^{\frac12}
\bigl\|\nabla_\nu\omss\bigr\|_{L^2}^{\frac56}\Bigr).
\end{split}\end{equation}

By summarizing
\eqref{4.12},~\eqref{4.16} and \eqref{4.18}, we arrive at
\begin{equation}\begin{split}\label{4.19}
\f{d}{dt}\|v^3&\|_{\hs}^2+2\|\nabla_\nu v^3\|_{\hs}^2
\lesssim \|v^3\|_\hs\Bigl(\nu^{\f43}\bigl\|\omss\bigr\|_{L^2}^{\frac23}
\bigl\|\nabla_\nu\omss\bigr\|_{L^2}^2\\
&+\bigl\|\vss\bigr\|_{L^2}^{\f29}
\bigl\|\nabla_\nu\vss\bigr\|_{L^2}^{\f23}\|\nabla_\nu v^3\|_{\hs}^{\frac43}+
\nu^{-\f23}\|\nabla_\nu v^3\|_\hs^2
\Bigr)\\
&+\|v^3\|_\hs^{\f56}\|\nabla_\nu v^3\|_\hs^{\f76}
\Bigl(\nu^{\f13}\bigl\|\omss\bigr\|_{L^2}^{\frac12}
\bigl\|\nabla_\nu\omss\bigr\|_{L^2}^{\frac56}\\
&\qquad\qquad\qquad+\bigl\|\vss\bigr\|_{L^2}^{\frac7{18}}
\bigl\|\nabla_\nu\vss\bigr\|_{L^2}^{\frac12}
\|\nabla_\nu v^3\|_{\hs}^{\f13}
\Bigr).
\end{split}\end{equation}

\subsection{Proof of Theorem \ref{thmnu}}
 We first deduce from the classical well-posedness result of
 $3$-D Navier-Stokes system
that the re-scaled system \eqref{scaleNS} admits a unique
smooth solution $v\in \cE_{T^\ast_\nu}$ for some maximal existing time $T^\ast_\nu$.

In the sequel, we shall prove that for any $t\in[0,T^\ast_\nu[,$
\begin{equation}\begin{split}\label{4.20}
&\bigl\|\omss(t)\bigr\|_{L^2}^2
+\int_0^t\bigl\|\nabla_\nu\omss(t')\bigr\|_{L^2}^2\, dt'
\leq2\, c_1^{\f32}\nu^{-1}\|\nablah v^3_0\|_{L^{\f32}}^{\f34},\\
&\bigl\|\vss(t)\bigr\|_{L^2}^2
+\int_0^t\bigl\|\nabla_\nu\vss(t')\bigr\|_{L^2}^2\, dt'
\leq 2\bigl\|\vss(0)\bigr\|_{L^2}^2=2\|\pa_3 v^3_0\|_{L^{\f32}}^{\f32},\\
&\|v^3(t)\|_{\hs}^2
+\int_0^t \|\nabla_\nu v^3(t')\|_{\hs}^2\, dt'
\leq \|v^3_0\|_{\hs}^2+\|\nablah v^3_0\|_{L^{\f32}}^2.
\end{split}\end{equation}  holds under the assumption
\eqref{smallcondition}.
In order to do so, we denote
\begin{equation}\begin{split}\label{4.21}
T^\star_\nu \eqdefa\sup\Bigl\{\, T\in ]0,T^\ast_\nu[\,:\,
\mbox{so that \eqref{4.20} holds for any $t\in[0,T]$}\, \Bigr\}.
\end{split}\end{equation}
In view of  \eqref{smallcondition}, \eqref{4.20} holds for $t\in [0,T]$ for some $T>0,$ which
guarantees that $T^\star_\nu$ is a positive number.

Next we are going to prove that $T^\star_\nu=T^\ast_\nu.$ Otherwise,
if $T^\star_\nu<T^\ast_\nu$,  for any $t<T^\star,$ by integrating
\eqref{4.4}  over $[0,t]$
we find
\beno\begin{split}
\bigl\|&\omss(t)\bigr\|_{L^2}^2
+\f43\int_0^t\bigl\|\nabla_\nu\omss(t')\bigr\|_{L^2}^2\, dt'\leq\|\om_0\|_{L^{\f32}}^{\f32}\\
&+C\nu^{-\f23}\| v^3\|_{L^\infty_t(\hs)}^{\frac12}\bigl\|\omss\bigr\|_{L^\infty_t(L^2)}^{\frac12}
\|\nabla_\nu v^3\|_{L^2_t(\hs)}^{\frac12}
\bigl\|\nabla_\nu\omss\bigr\|_{L^2_t(L^2)}^{\frac32}\\
&+C\nu^{-\f23}
\Bigl(\bigl\|\omss\bigr\|_{L^\infty_t(L^2)}^{\frac13}\bigl\|\nabla_\nu\omss\bigr\|_{L^2_t(L^2)}
+\bigl\|\vss\bigr\|_{L^\infty_t(L^2)}^{\frac13}\bigl\|\nabla_\nu\vss\bigr\|_{L^2_t(L^2)}\Bigr)\\
&\qquad\qquad\qquad\times\|v^3\|_{L^\infty_t(\hs)}^{\f13}\|\nabla_\nu v^3\|_{L^2_t(\hs)}^{\f23}
\bigl\|\omss\bigr\|_{L^\infty_t(L^2)}^{\frac13}\bigl\|\nabla_\nu\omss\bigr\|_{L^2_t(L^2)}^{\frac13},
\end{split}
\eeno
from which, and \eqref{4.20}
\footnote{By Sobolev embedding, the righthand side of the last
estimate in \eqref{4.20} can be
bounded by $C\|\nablah v^3_0\|_{L^{\f32}}^2$,
and in the following we shall also use this bound for simplicity.}, we infer
\begin{equation*}\begin{split}
\bigl\|\omss(t)\bigr\|_{L^2}^2
+\f43\int_0^t\bigl\|\nabla_\nu\omss(t')\bigr\|_{L^2}^2\, dt'&
\leq\|\om_0\|_{L^{\f32}}^{\f32}
+C\nu^{-\f23}\|\nablah v^3_0\|_{L^{\f32}}
\cdot c_1^{\f32}\nu^{-1}\|\nablah v^3_0\|_{L^{\f32}}^{\f34}\\
&+C\nu^{-\f23}\|\pa_3 v^3_0\|_{L^{\f32}}\|\nablah v^3_0\|_{L^{\f32}}
\cdot c_1^{\f12}\nu^{-\f13}\|\nablah v^3_0\|_{L^{\f32}}^{\f14}.
\end{split}\end{equation*}
Then under the assumption \eqref{smallcondition}, we deduce that
\begin{equation}\label{4.22}
\bigl\|\omss(t)\bigr\|_{L^2}^2
+\f43\int_0^t\bigl\|\nabla_\nu\omss(t')\bigr\|_{L^2}^2\, dt'
\leq(1+C\, c_2)
c_1^{\f32}\nu^{-1}\|\nablah v^3_0\|_{L^{\f32}}^{\f34}.
\end{equation}

Similarly, by integrating \eqref{4.9} over $[0,t]$
for  $t<T^\star_\nu$, we obtain
\beno
\begin{split}
\bigl\|&\vss(t)\bigr\|_{L^2}^2
+\f43\int_0^t\bigl\|\nabla_\nu\vss(t')\bigr\|_{L^2}^2\, dt'
\leq\|\pa_3 v^3_0\|_{L^{\f32}}^{\f32}\\
&+C\nu^{-1}\bigl\|\vss\bigr\|_{L^\infty_t(L^2)}^{\f89}
\bigl\|\nabla_\nu \vss\bigr\|_{L^2_t(L^2)}^{\f23}
\|\nabla_\nu v^3\|_{L^2_t(H^{\f23,-\f16})}^{\f43}\\
&+C\nu^{\f13}\bigl\|\omss\bigr\|_{L^\infty_t(L^2)}^{\f23}
\bigl\|\nabla_\nu \omss\bigr\|_{L^2_t(L^2)}^2
\bigl\|\vss\bigr\|_{L^\infty_t(L^2)}^{\f23}\\
&+C\nu^{-\f23}\|v^3\|_{L^\infty_t(\hs)}^{\f16}\|\nabla_\nu v^3\|_{L^2_t(\hs)}^{\f56}
\Bigl(\bigl\|\omss\bigr\|_{L^\infty_t(L^2)}^{\f56}
\bigl\|\nu\pa_3 \omss\bigr\|_{L^2_t(L^2)}^{\f76}\\
&\qquad\qquad+\bigl\|\vss\bigr\|_{L^\infty_t(L^2)}^{\f56}
\bigl\|\nu\pa_3 \vss\bigr\|_{L^2_t(L^2)}^{\f76}
\Bigr),
\end{split}
\eeno
which together with \eqref{4.20} ensures that
\begin{equation*}\begin{split}
\bigl\|&\vss(t)\bigr\|_{L^2}^2
+\f43\int_0^t\bigl\|\nabla_\nu\vss(t')\bigr\|_{L^2}^2\, dt'
\leq\|\pa_3 v^3_0\|_{L^{\f32}}^{\f32}\\
&+C\nu^{-1}\|\pa_3 v^3_0\|_{L^{\f32}}^{\f76}
\|\nablah v^3_0\|_{L^{\f32}}^{\f43}+C\nu^{\f13}\|\pa_3 v^3_0\|_{L^{\f32}}\cdot
c_1^{\f32}\nu^{-1}\|\nablah v^3_0\|_{L^{\f32}}^{\f34}\\
&+C\nu^{-\f23}\|\nablah v^3_0\|_{L^{\f32}}
\Bigl(\|\pa_3 v^3_0\|_{L^{\f32}}^{\f32}
+c_1^{\f32}\nu^{-1}\|\nablah v^3_0\|_{L^{\f32}}^{\f34}\Bigr).
\end{split}\end{equation*}
It is easy to observe from \eqref{smallcondition} that
\begin{align*}
C\nu^{-1}\|\pa_3 v^3_0\|_{L^{\f32}}^{\f76}
\|\nablah v^3_0\|_{L^{\f32}}^{\f43}
&=C\nu^{-1}\|\nablah v^3_0\|_{L^{\f32}}\cdot
\|\pa_3 v^3_0\|_{L^{\f32}}^{\f23}
\|\nablah v^3_0\|_{L^{\f32}}^{\f13}\cdot
\|\pa_3 v^3_0\|_{L^{\f32}}^{\f12}\\
&\leq C\,c_2^{\f23}
\|\pa_3 v^3_0\|_{L^{\f32}}^{\f32},
\end{align*}
\begin{align*}
C\nu^{\f13}\|\pa_3 v^3_0\|_{L^{\f32}}\cdot
c_1^{\f32}\nu^{-1}\|\nablah v^3_0\|_{L^{\f32}}^{\f34}
&=C\,c_1^{\f32}\nu^{-\f12}\|\nablah v^3_0\|_{L^{\f32}}^{\f12}
\cdot\nu^{-\f16}\|\nablah v^3_0\|_{L^{\f32}}^{\f14}
\cdot\|\pa_3 v^3_0\|_{L^{\f32}}\\
&\leq C\,c_1^{\f76}c_2^{\f14}\|\pa_3 v^3_0\|_{L^{\f32}}^{\f32},
\end{align*}
and
\begin{align*}
C\nu^{-\f23}\|\nablah v^3_0\|_{L^{\f32}}
\Bigl(\|\pa_3 v^3_0\|_{L^{\f32}}^{\f32}
+c_1^{\f32}\nu^{-1}\|\nablah v^3_0\|_{L^{\f32}}^{\f34}\Bigr)
\leq C\bigl(c_2+c_1^{\f12}c_2^{\f14}\bigr)
\|\pa_3 v^3_0\|_{L^{\f32}}^{\f32}.
\end{align*}
Hence as long as  $c_1,~c_2$ in \eqref{smallcondition} are sufficiently small, we achieve
\begin{equation}\label{4.23}
\bigl\|\vss(t)\bigr\|_{L^2}^2
+\f43\int_0^t\bigl\|\nabla_\nu\vss(t')\bigr\|_{L^2}^2\, dt'
\leq\bigl(1+C\,c_2^{\f14}\bigr)
\|\pa_3 v^3_0\|_{L^{\f32}}^{\f32}
\end{equation}

Exactly along the same line, we deduce from \eqref{4.19} that
for any $t\in[0,T^\star_\nu[$,
\begin{equation*}\begin{split}
&\|v^3(t)\|_{\hs}^2
+2\int_0^t \|\nabla_\nu v^3(t')\|_{\hs}^2\, dt'
\leq\|v^3_0\|_{\hs}^2\\
&+C\|\nablah v^3_0\|_{L^{\f32}}
\Bigl(\|\pa_3 v^3_0\|_{L^{\f32}}^{\f23}
\|\nablah v^3_0\|_{L^{\f32}}^{\frac43}+c_1^2\|\nablah v^3_0\|_{L^{\f32}}
+\nu^{-\f23}\|\nablah v^3_0\|_{L^{\f32}}^2\Bigr)\\
&+C\|\nablah v^3_0\|_{L^{\f32}}^2
\Bigl(\|\pa_3 v^3_0\|_{L^{\f32}}^{\frac23}
\|\nablah v^3_0\|_{L^{\f32}}^{\f13}
+c_1\nu^{-\f13}\|\nablah v^3_0\|_{L^{\f32}}^{\f12}\Bigr).
\end{split}\end{equation*}
Then under the assumption \eqref{smallcondition}, we obtain
\begin{equation}\label{4.24}
\|v^3(t)\|_{\hs}^2
+2\int_0^t \|\nabla_\nu v^3(t')\|_{\hs}^2\, dt'
\leq\|v^3_0\|_{\hs}^2+C\,(c_1^2+c_2)\|\nablah v^3_0\|_{L^{\f32}}^2.
\end{equation}

Hence if the constants $c_1$ and $c_2$  in \eqref{smallcondition} are small enough,
 we  deduce from  \eqref{4.22}-\eqref{4.24} that
for any $t\in[0,T^\star_\nu[$,
\begin{align*}
&\ \bigl\|\omss(t)\bigr\|_{L^2}^2
+\int_0^t\bigl\|\nabla_\nu\omss(t')\bigr\|_{L^2}^2\, dt'
\leq\f32\,c_1^{\f32}\nu^{-1}\|\nablah v^3_0\|_{L^{\f32}}^{\f34},\\
&\bigl\|\vss(t)\bigr\|_{L^2}^2
+\int_0^t\bigl\|\nabla_\nu\vss(t')\bigr\|_{L^2}^2\, dt'
\leq \f32\|\pa_3 v^3_0\|_{L^{\f32}}^{\f32},\\
\|v^3(t&)\|_{\hs}^2
+\int_0^t \|\nabla_\nu v^3(t')\|_{\hs}^2\, dt'
\leq \|v^3_0\|_{\hs}^2+\f12\|\nablah v^3_0\|_{L^{\f32}}^2,
\end{align*}
which contracts with the definition of $T^\star_\nu$
given by \eqref{4.21}. This in turn shows that $T^\star_\nu=T^\ast_\nu$,
and \eqref{4.20} holds for any $t\in[0,T^\ast_\nu[$.

Now if $T^\ast_\nu<\infty$, for any $p\in]4,6[,$ we have
\begin{align*}
\int_0^{T^\ast_\nu}\|v^3(t)\|_{H^{\frac12+\frac2p}}^p\,dt
&\lesssim \int_0^{T^\ast_\nu}\|v^3(t)\|_{H^{\frac12+\frac2p,0}}^p\,dt
+\int_0^{T^\ast_\nu}\|\pa_3v^3(t)\|_{H^{0,\f2p-\f12}}^p\,dt\\
&\leq C\sup_{t\in [0,T^\ast_\nu[}\|v^3(t)\|_{\hs}^{p-2}
\int_0^{T^\ast_\nu}\|\nablah v^3(t)\|_{\hs}^{2-\f p6}
\|\pa_3 v^3(t)\|_{\hs}^{\f p6}\,dt\\
&\qquad\qquad+C\sup_{t\in [0,T^\ast_\nu[}
\bigl\|\vss\bigr\|_{L^2}^{\bigl(\f{4}3-\f2p\bigr)p}
\int_0^{T^\ast_\nu}\bigl\|\nabla \vss(t)\bigr\|_{L^2}^2\,dt\\
&\leq C\|\nabla v^3_0\|_{L^{\f32}}^p,
\end{align*}
which contradicts with the blow-up criterion \eqref{blowupCZ5}.
As a result, $T^\ast_\nu=\infty$, and
\eqref{4.20} holds for any $t\in[0,\infty[$.
This completes the proof of Theorem \ref{thmnu} under the assumption \eqref{smallcondition}.

Theorem \ref{thmnu} with the condition \eqref{smallcondition2} can be proved along the same line.
One just need to modify the estimate \eqref{4.20} to be
\begin{equation*}\begin{split}
&\bigl\|\omss(t)\bigr\|_{L^2}^2
+\int_0^t\bigl\|\nabla_\nu\omss(t')\bigr\|_{L^2}^2\, dt'
\leq2\, c_1^{\f32}\nu^{-1}\|\nablah v^3_0\|_{L^{\f32}}^{\f34},\\
&\bigl\|\vss(t)\bigr\|_{L^2}^2
+\int_0^t\bigl\|\nabla_\nu\vss(t')\bigr\|_{L^2}^2\, dt'
\leq 2\|\nabla v^3_0\|_{L^{\f32}}^{\f32},\\
&\|v^3(t)\|_{\hs}^2
+\int_0^t \|\nabla_\nu v^3(t')\|_{\hs}^2\, dt'
\leq \|v^3_0\|_{\hs}^2+\|\nablah v^3_0\|_{L^{\f32}}^2.
\end{split}\end{equation*}
And then by repeating the previous argument, we  can conclude the proof of Theorem \ref{thmnu}, which we shall not present details here.

\bigbreak \noindent {\bf Acknowledgments.} P. Zhang would like to thank Professor L.
 Caffarelli and Professor J.-Y. Chemin for profitable discussions on this topic.

P. Zhang is partially supported
by NSF of China under Grants   11371347 and 11688101, Morningside Center of Mathematics of The Chinese Academy of Sciences and innovation grant from National Center for
Mathematics and Interdisciplinary Sciences.

\medskip

\end{document}